\newtheorem{theorem}{Theorem}[section]
\newtheorem{proposition}[theorem]{Proposition}
\newtheorem{lemma}[theorem]{Lemma}
\newtheorem{corollary}[theorem]{Corollary}
\newtheorem{definition}{Definition}[section]
\newtheorem{remark}[theorem]{Remark}
\newtheorem{example}[theorem]{Example}
\definecolor{Gray}{gray}{0.85}
\def\Q{\mathbb{Q}}
\def\Z{\mathbb{Z}}
\def\N{\mathbb{N}}
\def\R{\mathbb{R}}
\def\C{\mathbb{C}}
\def\F{\mathcal{F}}
\def\S{\mathcal{S}}
\def\D{\mathcal{D}}
\def\B{\mathcal{B}}
\def\BB{\boldsymbol{\B}}
\def\uu{\mathbf{u}}
\def\aa{\mathbf{a}}
\def\bb{\mathbf{b}}
\def\xx{\mathbf{x}}
\def\yy{\mathbf{y}}
\def\zz{\mathbf{z}}
\def\t{\mathbf{t}}
\DeclareMathOperator{\Fin}{Fin}
\DeclareMathOperator{\val}{val}
\DeclareMathOperator{\supp}{supp}
\newcommand{\floor}[1]{\lfloor #1 \rfloor}
\newcommand{\ceil}[1]{\lceil #1 \rceil}
\newcommand{\abs}[1]{\lvert #1 \rvert}
\begin{document}

\title{Finiteness property in Cantor real numeration systems}
\author{Zuzana Masáková, Edita Pelantová and Katarína Studeničová\\
FNSPE Czech Technical University in Prague \\
\tt{edita.pelantova@fjfi.cvut.cz}}

\maketitle

\begin{abstract}
We consider a numeration system which is a common generalization of the positional systems introduced by Cantor and Rényi. Number  representations are obtained using a composition of $\beta_k$-transformations for a given  sequence of real bases $\BB=(\beta_k)_{k\geq 1}$, $\beta_k>1$. 
We focus on~arithmetical properties of the set of numbers with finite $\BB$-expansion in case that $\BB$ is an alternate base, i.e.\ $\BB$ is a periodic sequence. We provide necessary conditions for the so-called finiteness property. We further show a~sufficient condition using rewriting rules on the~language of~representations. 
The proof is constructive and provides a~method for~performing addition of~expansions in alternate  bases. Finally, we give a family of alternate bases that satisfy this sufficient condition. Our work  generalizes the results of Frougny and Solomyak obtained for the case when the base $\BB$ is a constant sequence.
\end{abstract}



\section{Introduction }\label{sec:intro}


In the well-known Rényi numeration systems with a real base $\beta>1$, a real number $x\in[0,1)$ is represented by a series $x=\sum_{i\geq 1} x_i\beta^{-i}$, where the digits $x_i$ are obtained by the~$\beta$-transformation $T_\beta:[0,1)\to[0,1)$, $T_\beta(x)=\beta x - \floor{\beta x}$, setting $x_i:=\lfloor\beta T^{i-1}(x) \rfloor\in\{a\in\Z:0\leq a<\beta\}$. The resulting representation of $x$ can be equivalently obtained by the greedy algorithm and is called the (greedy) $\beta$-expansion.
This numeration system was defined and studied from the dynamical point of view in~\cite{Renyi57}. Since then, dozens of authors have studied Rényi numeration systems and their diverse generalizations for their algebraic, geometric or algorithmic features, see for example~\cite{FrSa10}, or~\cite{Rigo2} and the extended bibliography therein.

In this article we consider Cantor real numeration systems as presented in~\cite{CC21}. Similar definition appeared independently in a more general setting in~\cite{CD20}. Note that the concept was proposed already by Galambos~\cite{Galambos}. It is a generalisation of the positional number system considered by Cantor~\cite{Cantor} 
with the aim of providing irrationality criteria for certain class of numbers. 
A Cantor real base $\BB=(\beta_k)_{k\geq 1}$ is a sequence of real numbers $\beta_k>1$ with the condition $\prod_{k\geq 1}\beta_k = +\infty$. Each member of the base sequence has an associated transformation $T_{\beta_i}$ on the interval $[0,1)$.
A real number $x\in[0,1)$ is represented by a series 
$$
x=\sum_{i\geq 1} \frac{x_i}{\beta_1\beta_2\cdots\beta_i}.
$$ 
The integer digits $x_i$ are obtained by applying a composition of $\beta_i$-transformations and they take value in a set depending on the position. In particular 
$$
x_i:=\lfloor\beta_i T_{\beta_{i-1}}\circ \cdots \circ T_{\beta_2}\circ T_{\beta_1}(x) \rfloor \in \{ a\in\Z : 0\leq a <\beta_i \}.
$$
Many new interesting phenomena appear in such numeration systems. A number of results has been provided for the particular situation when the base sequence $\BB$ is purely periodic with period $p$. In this case we speak about alternate base systems and write $\BB=(\beta_1,\dots,\beta_p)$. Note that if the period-length is equal to 1, i.e.\ the base sequence is constant, we obtain the original Rényi numeration system. 

Charlier and Cisternino in~\cite{CC21} study the primary question, namely description of the~Cantor real base representations using the greedy algorithm and characterisation of the~language of greedy representations. They define a subshift corresponding to an alternate base $\BB$ and characterize alternate bases yielding sofic $\BB$-shift. This happens precisely if the greedy expansion of 1 in base $\BB$ and all its shifts $\BB^{(j)}=(\beta_k)_{k\geq j}$ are eventually periodic.
This may be seen as a generalization of the result by Anne Bertrand-Mathis for Rényi numeration systems. 
%
%
Further dynamical properties of alternate base systems are studied in~\cite{CCD21}. 
In \cite{CCMP22}, algebraic description of  alternate
bases with sofic $\BB$-shift is given, together with the construction of a B\"uchi automaton for computation of the normalisation function.

A particular role is played by alternate bases $\BB=(\beta_1, \dots, \beta_p)$ where $\delta=\prod_{i=1}^p\beta_i$ is a~Pisot number and $\beta_i$ belong to the extension field $\Q(\delta)$ for all $i$. As shown in~\cite{CCMP22}, such bases belong to the class of alternate bases with sofic $\BB$-shift. Moreover, such bases stand out when studying the set of numbers with periodic representations, see~\cite{CCK22}, since they satisfy generalization of the theorem by Schmidt~\cite{S80}.

In this paper we focus on the set ${\rm Fin}(\BB)$ of numbers in $[0,1)$ with finite expansions. In~particular,  we aim to identify alternate bases with the so-called finiteness property denoted by (F), i.e.\ such that the set ${\rm Fin}(\BB)$ is closed under addition and subtraction (provided the result belongs to $[0,1)$). We are also interested in a weaker property, the so-called positive finiteness, denoted by (PF), where we require closedness under addition only.

For the case of period $p=1$, i.e.\ the Rényi expansions, the question of finiteness and positive finiteness was originally raised by Frougny and Solomyak~\cite{FS92}. They have shown that if a real base $\beta>1$ satisfies the finiteness property, then $\beta$ is a Pisot number with no positive conjugates, and the corresponding greedy expansion of 1 is finite. 
These conditions are, however, not sufficient and description of bases satisfying the finiteness property turned out to be quite complicated. So far, no exhaustive algebraic characterization has been given.
Nevertheless, some classes of bases $\beta$ with (F) have been found. In~\cite{FS92}, it is proved that (F) is satisfied by the dominant roots of polynomials $x^d-a_{1}x^{d-1}-a_2x^{d-2}-\cdots-a_{d-1}x-a_d$ with positive integer coefficients satisfying $a_1\geq a_2\geq \cdots \geq a_{d-1}\geq a_d$.
Other sufficient conditions were given by Hollander~\cite{Hollander} and Ambrož et al.\ in~\cite{AFMP03}.
Akiyama~\cite{A98} proved that the finiteness property is equivalent to the fact that 0 is an inner point of the so-called central tile associated to the base $\beta$, as defined by Thurston~\cite{Thurston}.
Akiyama then provided a complete description of cubic Pisot units satisfying (F)~\cite{A00}. 
In~\cite{ABBP05}, the correspondence of numeration systems with base $\beta$ satisfying property (F) and the so-called shift radix systems is explained.

In this paper we focus on alternate bases with general $p\geq1$. First we provide necessary conditions for (PF) and (F), in analogy to those of~\cite{FS92}, see Section~\ref{sec:necessary}. Section~\ref{sec:sufficient} presents a sufficient condition for (F) using a system of rewriting rules. The idea stems in rewriting non-greedy $\BB$-representations into lexicographical larger strings representing the same value. Our sufficient condition guarantees that the lexicographically maximal (i.e.\ greedy) $\BB$-representation is obtained in finitely many steps. The most important contribution of this work is given in Section~\ref{sec:nerovnosti} where we provide a class of alternate bases satisfying property (PF) and (F).   

Let us stress that a part of the results of the present paper was announced without full proof in~\cite{MPS23}.


\section{Preliminaries}\label{sec:preliminaries}

\subsection{Finiteness in Cantor real bases}

A \textit{Cantor real base} is a sequence $\BB=  (\beta_n)_{n\geq1}$ of real numbers $\beta_n>1$ such that $\prod_{n\geq1}\beta_n=+\infty$. 
A~\textit{$\BB$-representation} of a number $x\in[0,1]$ is a sequence $\xx=x_1x_2x_3\dots$ of non-negative integers such that 
$$
x=  \sum_{n\geq1}\frac{x_n}{\beta_{1}\cdots \beta_n} =: \val {\xx}.
$$
Note that $\val \xx$ can be defined for any (not only non-negative) sequence $\xx$ of integers. 
We say that the~representation is finite, 
if its support $\supp{\xx}=\{n\in\N,\,n\geq 1 : x_n\neq 0\}$ is finite. 
Denote 
\begin{equation}\label{eq:stringsF}
\F = \{\xx=(x_n)_{n\geq1}\,:\,\supp{\xx}\text{ is finite and  }
 x_n \in \N \ \text{for each }n\}    
\end{equation}
the set of sequences of non-negative integers with finite support. 

A particular $\BB$-representation can be obtained by the \textit{greedy algorithm}:

\noindent For a given $x\in[0,1)$, set $x_1=\lfloor \beta_1 x \rfloor$, $r_1=\beta_1x-x_1$, and for $n\geq 2$ set $x_n=\lfloor \beta_nr_{n-1}\rfloor$, and ${r_n=\beta_nr_{n-1}-x_n}$.  
The obtained $\BB$-representation is denoted by $d_{\BB}(x)=x_1x_2x_3\dots$ and called the \textit{$\BB$-expansion of $x$}. 
It~follows from the greedy algorithm that the digits in the  $\BB$-expansion satisfy $0\leq x_i<\beta_i$ and that $d_{\BB}(x)$ is lexicographically greatest among all  $\BB$-representations of $x$. 
Note that the algorithm works also for~$x=1$; the sequence $d_{\BB}(1)$
is called the \textit{$\BB$-expansion of 1} and it is lexicographically greatest among all $\BB$-representations of 1.

If a $\BB$-expansion is finite, i.e.\ it is of the form $w_1w_2\cdots w_k0^\omega$, where $0^\omega$ stands for infinite repetition of zeros, we sometimes omit the suffix $0^\omega$. We define
\begin{equation}\label{eq:Fin}
\Fin(\BB)=\left\{x\in[0,1) \, : \, d_{\BB}(x)\in\F\right\}.
\end{equation}

\begin{definition}
We say that a Cantor real base $\BB$ satisfies \textit{Property (PF) (the positive finiteness property)}, if for every $x,y\in\Fin(\BB)$ we have
\begin{equation}\label{eq:PF} 
x+y\in[0,1)\implies x+y\in\Fin(\BB).
\end{equation}
We say that a Cantor real base $\BB$ satisfies \textit{Property (F) (the finiteness property)}, if for every $x,y\in\Fin(\BB)$ we have
\begin{equation}\label{eq:F} 
x+y\in[0,1)\implies x+y\in\Fin(\BB)\quad \text{and}\quad 
x-y\in[0,1)\implies x-y\in\Fin(\BB).
\end{equation}
\end{definition}

Property (F) expresses the fact that the set of finite expansions is closed under addition and subtraction of its elements, provided the sum belongs to the interval $[0,1)$. 
Property (PF) focuses only on~addition.

For a given sequence of real numbers $\beta_n>1$, $n\geq 1$, denote by
$\BB^{(k)}$ the Cantor real base ${\BB^{(k)}=(\beta_n)_{n\geq k}}$. 
Realize the following relation between these bases.

\begin{lemma}\label{l:Fsoupnute}
Let $\BB=\BB^{(1)}=(\beta_n)_{n\geq 1}$ be a Cantor real base. If $\BB^{(1)}$ satisfies Property (PF) or Property~(F), then, for every $k\geq 1$, the base $\BB^{(k)}$ satisfies Property (PF) or Property (F), respectively.  
\end{lemma}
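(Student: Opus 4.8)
The plan is to reduce everything to a single scaling identity relating the greedy expansions in $\BB^{(1)}$ and $\BB^{(k)}$. First I would check that $\BB^{(k)}$ is genuinely a Cantor real base, so that $\Fin(\BB^{(k)})$ is even defined: since $\prod_{n\geq1}\beta_n=+\infty$ while $\prod_{i=1}^{k-1}\beta_i$ is a finite positive number, we still have $\prod_{n\geq k}\beta_n=+\infty$. Now set $c=\prod_{i=1}^{k-1}\beta_i\geq 1$ (the empty product, $c=1$, when $k=1$). The heart of the argument is the claim that for every $y\in[0,1)$,
\[
d_{\BB^{(1)}}(y/c)=0^{k-1}\,d_{\BB^{(k)}}(y).
\]

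I would prove this by tracking the greedy algorithm for $\BB^{(1)}$ started at $x=y/c$. Because $y<1$ and the relevant partial products of the $\beta_i$ are at least $1$, the number fed into the floor at each of the first $k-1$ steps stays strictly below $1$, so each of the first $k-1$ digits is $0$; concretely, the remainder after step $j$ equals $y/(\beta_{j+1}\cdots\beta_{k-1})$ for $0\leq j\leq k-1$, so that the remainder after step $k-1$ is exactly $y$. From step $k$ onward the algorithm runs with the bases $\beta_k,\beta_{k+1},\dots$ on the input $y$, i.e.\ it literally computes $d_{\BB^{(k)}}(y)$. Verifying that the remainders telescope as stated is the one computation to carry out, and it is the only point requiring a little care; this is the main (though modest) obstacle. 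Since prepending $k-1$ zeros does not affect finiteness of the support, the displayed identity yields the equivalence
\[
y\in\Fin(\BB^{(k)})\iff y/c\in\Fin(\BB^{(1)}),\qquad y\in[0,1).
\]

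With this equivalence the two properties transfer at once. Given $x,y\in\Fin(\BB^{(k)})$, the forward direction gives $x/c,y/c\in\Fin(\BB^{(1)})$. If $x+y\in[0,1)$, then $(x+y)/c=x/c+y/c$ and $(x+y)/c\in[0,1/c)\subseteq[0,1)$, so Property (PF) for $\BB^{(1)}$ gives $(x+y)/c\in\Fin(\BB^{(1)})$, whence $x+y\in\Fin(\BB^{(k)})$ by the backward direction; this establishes (PF) for $\BB^{(k)}$. For Property (F) one argues identically for the difference: if $x-y\in[0,1)$, then $(x-y)/c=x/c-y/c\in[0,1)$, and (F) for $\BB^{(1)}$ gives $(x-y)/c\in\Fin(\BB^{(1)})$, hence $x-y\in\Fin(\BB^{(k)})$. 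Thus (PF), resp.\ (F), for $\BB^{(1)}$ yields (PF), resp.\ (F), for every $\BB^{(k)}$, as claimed. (Equivalently, one may prove only the one-step passage from $\BB^{(k)}$ to $\BB^{(k+1)}$, with $c=\beta_k$, and conclude by induction on $k$.)
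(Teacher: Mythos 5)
Your proof is correct and takes essentially the same approach as the paper: your scaling identity $d_{\BB^{(1)}}(y/c)=0^{k-1}d_{\BB^{(k)}}(y)$ is exactly the paper's relation~\eqref{eq:posunrozvoje} (stated there for one step, $k=2$), and the paper transfers (PF)/(F) through it and then inducts on $k$ --- precisely the variant you mention in your closing parenthesis. The only cosmetic difference is that you establish the $k$-step identity directly from the greedy algorithm instead of iterating the one-step case.
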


\begin{proof}
Consider $z \in [0,1)$. From the greedy algorithm, it is not difficult to show that
\begin{equation}\label{eq:posunrozvoje}
d_{\BB^{(2)}}(z) =  z_{2}z_3z_4\cdots \iff
d_{\BB^{(1)}}(z/\beta_1) =  0z_{2}z_3z_4\cdots .
\end{equation}
Consequently, 
\begin{equation*}
\frac{1}{\beta_1}\Fin\big(\BB^{(2)}\big) \subset \Fin\big(\BB^{(1)}\big).    
\end{equation*}
Take $x,y\in\Fin\big(\BB^{(2)}\big)$ such that $x+y<1$. Then $\frac{x}{\beta_1}$, $\frac{y}{\beta_1}\in \Fin\big(\BB^{(1)}\big)$, and, moreover, 
${z=\frac{x}{\beta_1}+\frac{y}{\beta_1} <\frac1\beta_1<1}$. By Property (PF) of the base $\BB^{(1)}$, necessarily $z\in\Fin(\BB^{(1)})$. 
Moreover, since $z<\frac1\beta_1$, the greedy algorithm implies that the $\BB^{(1)}$-expansion of $z$ is of the~form $d_{\BB^{(1)}}(z) =  0z_{2}z_3z_4\cdots\in\F$. 
Therefore  $d_{\BB^{(2)}}(\beta_1z) =  z_{2}z_3z_4\cdots\in\F$, which means that $x+y\in\Fin\big(\BB^{(2)}\big)$. This proves that $\BB^{(2)}$  satisfies Property~(PF). The proof for subtraction is analogous.
For $\BB^{(k)}$, $k\geq 3$, we proceed by induction.
\end{proof}

\subsection{Admissibility}

Let $\BB=(\beta_n)_{n\geq 1}$ be a Cantor real base.
We say that a sequence $\xx=(x_n)_{n\geq1}$ of integers is \textit{admissible} in~base~$\BB$, if there exists a real number $x\in[0,1)$ such that $\xx = d_{\BB}(x)$. The admissible sequences have been characterized in~\cite{CC21} in terms of the~quasi-greedy expansions of 1.
The \textit{quasi-greedy expansion of 1} in base $\BB$ is defined as 
$$
d_{\BB}^*(1)=\lim_{x\to1-} d_{\BB}(x),
$$
where the limit is taken over the product topology. Note that the quasi-greedy expansion of 1 is lexicographically greatest among all ${\BB}$-representations of 1 with infinitely many non-zero digits.
We obviously have
$$
d_{\BB}^*(1)\preceq_{lex} d_{\BB}(1),
$$
with equality precisely if $d_{\BB}(1)\notin\F$.

\begin{theorem}[\!\cite{CC21}]\label{t:ParryPodminka}
Let  ${\BB}=(\beta_n)_{n\geq 1}$ be a Cantor real base. A sequence of non-negative integers $z_1z_2z_3\cdots$ is admissible in base ${\BB}$ if and only if for each $n\geq 1$
the following inequality holds true
$$
z_{n} z_{n+1}z_{n+2}\cdots \prec_{lex} d_{\BB^{(n)}}^*(1).
$$
\end{theorem}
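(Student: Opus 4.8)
The plan is to prove the two implications separately, reducing each to the behaviour of the greedy remainders together with the characterisation of $d_{\BB^{(n)}}^*(1)$ as a lexicographic supremum.

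For the necessity direction, suppose $z_1z_2z_3\cdots=d_{\BB}(x)$ for some $x\in[0,1)$, and let $r_0=x$, $r_n=\beta_nr_{n-1}-z_n$ be the remainders produced by the greedy algorithm, so that $r_{n-1}\in[0,1)$ for every $n$. The first step is to observe that the greedy algorithm is self-similar under the shift: running it on $r_{n-1}$ in the base $\BB^{(n)}$ reproduces exactly the digits $z_nz_{n+1}\cdots$, so that $z_nz_{n+1}\cdots=d_{\BB^{(n)}}(r_{n-1})$ (this is the mechanism already exploited for \eqref{eq:posunrozvoje}). It then suffices to show that the greedy expansion of any $y\in[0,1)$ is lexicographically strictly below $d_{\BB^{(n)}}^*(1)$. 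Since greedy expansions are order preserving and $d_{\BB^{(n)}}^*(1)=\lim_{t\to1-}d_{\BB^{(n)}}(t)$ is their lexicographic supremum, we get $d_{\BB^{(n)}}(y)\preceq_{lex}d_{\BB^{(n)}}^*(1)$; strictness follows because an equality of sequences would force equality of their values, whereas $\val d_{\BB^{(n)}}(y)=y<1=\val d_{\BB^{(n)}}^*(1)$. Applying this with $y=r_{n-1}$ yields the required inequalities.

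For the sufficiency direction, suppose $z_1z_2z_3\cdots$ satisfies $z_nz_{n+1}\cdots\prec_{lex}d_{\BB^{(n)}}^*(1)$ for all $n$, and set $x=\val(z_1z_2\cdots)$ together with the tail values $v_n=\val_{\BB^{(n)}}(z_nz_{n+1}\cdots)$. I claim it is enough to prove that $v_n\in[0,1)$ for every $n$. Indeed, once this is known, the identity $\beta_nv_n=z_n+v_{n+1}$ with $v_{n+1}\in[0,1)$ shows $z_n=\floor{\beta_nv_n}$, so the greedy algorithm applied to $x=v_1$ returns precisely $z_1z_2\cdots$; that is, the sequence is admissible.

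The heart of the proof, and the step I expect to be the main obstacle, is therefore the implication ``all tails lexicographically below the respective quasi-greedy expansions of $1$ $\Rightarrow$ all tail values lie in $[0,1)$''. The natural approach is a self-referential comparison: writing $d_{\BB^{(n)}}^*(1)=q_0q_1\cdots$ and letting $j$ be the first index where $z_{n+j}<q_j$, one subtracts the two value expansions term by term and uses $\val_{\BB^{(n)}}(d_{\BB^{(n)}}^*(1))=1$ to obtain an estimate of the shape
\[
v_n-1\le \frac{1}{\beta_n\cdots\beta_{n+j}}\bigl(v_{n+j+1}-1\bigr),
\]
which bounds the excess of $v_n$ over $1$ in terms of that of a strictly later tail. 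The delicate point is to convert this family of inequalities into $v_n<1$ for all $n$. If some $v_n>1$, iterating the estimate produces indices $n=n_0<n_1<n_2<\cdots$ along which the excesses grow at least like the partial products $\prod_{i=n_0}^{n_k-1}\beta_i$; since $\prod_{i\ge1}\beta_i=+\infty$ by the definition of a Cantor real base, these excesses must diverge. The obstacle is then to contradict this divergence by an a priori bound on the tail values, a bound that itself has to be extracted from the lexicographic hypotheses (the crude digit bounds $z_n<\beta_n$ do not suffice when $\beta_n\to1$). To secure it I would combine the self-similar structure of the quasi-greedy expansions of $1$ under the shift with the monotone approximation of $x$ by the truncations $z_1\cdots z_N0^\omega$, which inherit the same lexicographic conditions. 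Once $v_n<1$ is established for all $n$, the sufficiency follows from the reduction above.
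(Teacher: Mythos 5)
The paper never proves this statement --- it is imported wholesale from \cite{CC21} --- so there is no internal proof to compare against; I therefore assess your argument on its own merits. Your necessity direction is correct (shift-invariance of the greedy algorithm plus the supremum property of the quasi-greedy expansion), and your reduction of sufficiency to the claim ``$v_n\in[0,1)$ for all $n$'', as well as the discrepancy estimate, are sound. But the part you yourself call the heart of the proof is exactly the part you do not prove, and the plan you sketch for it has two concrete holes. First, the boundary case: if $v_n=1$, your estimate --- in which you discarded the tail value $Q_{n,j}$ of $q_{j+1}q_{j+2}\cdots$ (taken in base $\BB^{(n+j+1)}$) --- yields only $v_{n+j+1}\geq 1$, so the ``excesses'' need not grow at all and the divergence argument never gets started. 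Second, even when some $v_n>1$, turning the growth of excesses into a contradiction requires an a priori upper bound on the tail values, and you concede you do not have one; ``I would combine the self-similar structure \dots with the monotone approximation \dots'' is a statement of intent, not an argument. As written, the sufficiency direction is a program with its hardest step open.

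Both holes close at once if you make the truncations the primary object rather than a source of an a priori bound. The truncations $z_n\cdots z_N0^\omega$ inherit the strict lexicographic hypotheses (zeroing a tail can only decrease a string lexicographically, and each $d^*_{\BB^{(m)}}(1)$ has nonzero first digit). For a string whose last nonzero digit is at position $N$, prove $v_n<1$ for every $n$ by backward induction on $n$: trivial for $n>N$, and for $n\leq N$ use the estimate in the sharper form
\[
v_n-1\ \leq\ \frac{1}{\beta_n\cdots\beta_{n+j}}\bigl(v_{n+j+1}-1-Q_{n,j}\bigr),\qquad Q_{n,j}\geq 0,
\]
where $n+j+1>n$, so the induction hypothesis $v_{n+j+1}<1$ forces $v_n<1$. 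Monotone convergence then gives $v_n\leq 1$ for the original infinite string (in particular the series defining $v_n$ converges, a point you glossed over). Finally, $v_n=1$ is impossible: the exact form of the identity gives $0=(z_{n+j}-q_j)+(v_{n+j+1}-Q_{n,j})$, hence $v_{n+j+1}\geq 1+Q_{n,j}>1$, because $Q_{n,j}>0$ (the quasi-greedy expansion has infinitely many nonzero digits); this contradicts $v_{n+j+1}\leq 1$. With this rearrangement no divergence argument and no a priori bound are needed, and the rest of your proof goes through.
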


For recognizing admissibility of finite digit strings, we have a condition formulated using the $\BB$-expansions of 1, instead of the quasi-greedy expansions.

\begin{proposition}\label{p:admisibilitaF}
Let  ${\BB}=(\beta_n)_{n\geq 1}$ be a Cantor real base. A sequence of non-negative integers ${z_1z_2z_3\cdots\in\F}$ is admissible in base $\BB$ if and only if for each $n\geq 1$
the following inequality holds true
$$
z_{n} z_{n+1}z_{n+2}\cdots \prec_{lex} d_{\BB^{(n)}}(1).
$$
\end{proposition}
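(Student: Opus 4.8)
The plan is to derive the statement from Theorem~\ref{t:ParryPodminka}, which is the same characterization but with the quasi-greedy expansions $d_{\BB^{(n)}}^*(1)$ in place of the greedy expansions $d_{\BB^{(n)}}(1)$. The only-if direction is then immediate: if $z_1z_2z_3\cdots$ is admissible, Theorem~\ref{t:ParryPodminka} gives $z_nz_{n+1}\cdots\prec_{lex}d_{\BB^{(n)}}^*(1)$ for all $n$, and since $d_{\BB^{(n)}}^*(1)\preceq_{lex}d_{\BB^{(n)}}(1)$ the required inequalities follow. All the content is in the if direction, where the strict inequalities against $d_{\BB^{(n)}}(1)$ must be upgraded to strict inequalities against $d_{\BB^{(n)}}^*(1)$. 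The two bounds differ only at those $n$ for which $d_{\BB^{(n)}}(1)$ is finite, and this is precisely where the hypothesis $z_1z_2z_3\cdots\in\F$ will be used.

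The key input I would record first is the description of the quasi-greedy expansion when the greedy one is finite: if $d_{\BB^{(n)}}(1)=t_1\cdots t_k$ with $t_k\neq0$, then
$$ d_{\BB^{(n)}}^*(1)=t_1\cdots t_{k-1}(t_k-1)\,d_{\BB^{(n+k)}}^*(1). $$
I would justify this by checking that the right-hand side is a $\BB^{(n)}$-representation of $1$ (a one-line computation, using that $d_{\BB^{(n+k)}}^*(1)$ represents $1$ in base $\BB^{(n+k)}$) which has infinitely many nonzero digits, and then identifying it with $d_{\BB^{(n)}}^*(1)$ as the lexicographically greatest such representation; alternatively one may cite this from~\cite{CC21}.

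With this formula in hand, assume the greedy inequalities hold for every $n$ and consider the set $M$ of indices $m$ with $z_mz_{m+1}\cdots\succeq_{lex}d_{\BB^{(m)}}^*(1)$. Since $d_{\BB^{(m)}}^*(1)$ has infinitely many nonzero digits whereas $z_mz_{m+1}\cdots$ is finite, membership in $M$ in fact forces $z_mz_{m+1}\cdots\succ_{lex}d_{\BB^{(m)}}^*(1)\succ_{lex}0^\omega$; hence $M\subseteq\{1,\dots,N\}$, where $N$ is the last index of $\supp(z_1z_2z_3\cdots)$, so $M$ is finite. Suppose $M\neq\emptyset$ and let $m=\max M$. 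Then $d_{\BB^{(m)}}^*(1)\prec_{lex}z_mz_{m+1}\cdots\prec_{lex}d_{\BB^{(m)}}(1)$, which forces $d_{\BB^{(m)}}(1)$ to be finite, say $d_{\BB^{(m)}}(1)=t_1\cdots t_k$ with $t_k\neq0$. A short lexicographic squeeze, using that $d_{\BB^{(m)}}(1)$ and $d_{\BB^{(m)}}^*(1)$ share their first $k-1$ digits, shows that $z_{m+i-1}=t_i$ for $i<k$ and $z_{m+k-1}=t_k-1$; plugging this into the displayed formula, the inequality $z_mz_{m+1}\cdots\succ_{lex}d_{\BB^{(m)}}^*(1)$ reduces to $z_{m+k}z_{m+k+1}\cdots\succ_{lex}d_{\BB^{(m+k)}}^*(1)$. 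Thus $m+k\in M$ with $m+k>m$, contradicting maximality; therefore $M=\emptyset$, which is exactly the admissibility criterion of Theorem~\ref{t:ParryPodminka}.

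The conceptual heart, and the step I expect to be the main obstacle, is recognizing that a violation of the quasi-greedy bound at an index $m$ propagates to a strictly larger index $m+k$: once this is set up, the finiteness of the support bounds the possible bad indices and the maximality argument closes immediately. The two technical points to execute carefully are the quasi-greedy tail formula above and the lexicographic squeeze extracting $z_{m+k-1}=t_k-1$; both are elementary but must be handled with the right bookkeeping of indices.
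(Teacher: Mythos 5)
Your proposal is correct and follows essentially the same route as the paper: both directions rest on Theorem~\ref{t:ParryPodminka}, the key formula $d_{\BB^{(m)}}^*(1)=t_1\cdots t_{k-1}(t_k-1)\,d_{\BB^{(m+k)}}^*(1)$ from~\cite{CC21}, the strictness upgrade coming from $\zz\in\F$ versus $d^*\notin\F$, and the propagation of a quasi-greedy violation from an index $m$ to the larger index $m+k$, with finiteness of $\supp\zz$ bounding the bad indices. The paper phrases this as an iteration (Case 2 repeated until Case 1 must occur) while you phrase it as a maximal-counterexample contradiction, but the mathematical content is identical.
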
 
\begin{proof}
The implication $\Rightarrow$ follows from Theorem~\ref{t:ParryPodminka}, since $z_{n} z_{n+1}z_{n+2}\cdots \prec_{lex} d_{\BB^*{(n)}}(1)\preceq_{lex} d_{\BB{(n)}}(1)$.
For the opposite implication $\Leftarrow$, we will prove an equivalent statement:

{\it Let  $\zz = z_1z_2z_3 \cdots$ belong to $\F$. If $\zz$ is not  admissible in base $\BB$, then there exists index $\ell\in \N, \ell\geq1$, such that $d_{\BB^{(\ell)}}(1)\preceq z_{\ell}z_{\ell+1}z_{\ell+2}\dots$.} 

By Theorem~\ref{t:ParryPodminka}, if $\zz = z_1z_2z_3 \cdots$ is not admissible in $\BB$, then there exists an index $i\geq 1$ such that 
$d^*_{\BB^{(i)}}(1) \preceq_{lex} z_{i}z_{i+1}z_{i+2}\cdots$. 
Obviously, $i \leq  \max \supp{\zz}$. 
Since $\zz \in \F$ and $d^*_{\BB^{(i)}}(1)\notin\F$, we have a strict inequality 
\begin{equation}\label{eq:dokonceOstra}
  d^*_{\BB^{(i)}}(1) \prec_{lex} z_{i}z_{i+1}z_{i+2}\cdots. 
\end{equation}
If $d_{\BB^{(i)}}(1)\notin\F$, then $d^*_{\BB^{(i)}}(1)=d_{\BB^{(i)}}(1)$ and the proof is finished. Consider the other case, namely that $d_{\BB^{(i)}}(1)=w_1w_2\cdots w_{n-1}w_n 0^\omega\in\F$, where  $w_n \geq 1$.  It has been shown in~\cite{CC21} that 
 $d^*_{\BB^{(i)}}(1) = w_1\cdots w_{n-1}(w_n-1)d^*_{\BB^{(i+n)}}(1)$. Inequality \eqref{eq:dokonceOstra} implies 
 \begin{enumerate}
     \item[\itshape 1.] either $w_1\cdots w_{n-1}(w_n-1) \prec_{lex} z_{i}z_{i+1}\cdots z_{i+n-1} $, 
     \item[\itshape 2.]  or $d^*_{\BB^{(i+n)}}(1) \prec_{lex} z_{i+n}z_{i+n+1}\cdots $.
 \end{enumerate}

\textit{Case 1.}  
As the lexicographically smallest word of length $n$ strictly  greater than  $w_1\dots w_{n-1}(w_n-1) $ is $w_1\dots w_{n-1}w_n $,  we have  $w_1\dots w_{n-1}w_n \preceq_{lex}z_{i}z_{i+1} \cdots z_{i+n-1}$. Thus
$$
d_{\BB^{(i)}}(1)\preceq_{lex} z_{i}z_{i+1} \dots z_{i+n-1} 0^\omega \preceq_{lex} z_{i}z_{i+1}z_{i+2}\dots, 
$$ 
as we wanted to show. 
 
\textit{Case 2.}  
We have found $i_{new}= i+n>i$ so that~\eqref{eq:dokonceOstra} is satisfied when substituting  $i_{new}$ in place of $i$. The same idea can be repeated. As $\supp{\zz}$ is bounded, finitely many steps lead to Case 1.   
\end{proof}

\subsection{Alternate bases}

In case that the Cantor real base $\BB=(\beta_n)_{n\geq 1}$ is a purely periodic sequence with period $p$, we call it an~\textit{alternate base} and denote
$$
\BB=(\beta_1, \beta_2, \dots,\beta_p),\qquad \delta=\prod_{i=1}^p\beta_i.
$$ 
For the Cantor real bases 
$\BB^{(n)}=(\beta_{n},\beta_{n+1},\dots,\beta_{n+p-1})$, 
we have $\BB^{(n)}=\BB^{(n+p)}$ for every $n\geq 1$. The~indices will be therefore considered $\!\!\!\!\mod p$. Let us stress that throughout this paper the representatives of~the~congruence classes $\!\!\!\!\mod p$ will be taken in the set $\Z_p=\{1,2,\dots,p\}$. 

For the Cantor real base 
$\BB^{(n)}$, $n\geq 1$, denote the $\BB^{(n)}$-expansions of 1 by
\begin{equation}\label{eq:t}
  \t^{(n)}=t_1^{(n)}t_2^{(n)} t_3^{(n)}\cdots = d_{\BB^{(n)}}(1).
\end{equation} 
Note that $\t^{(n)}=\t^{(n+p)}$ for each $n\in\N$. In fact, we only work with $\t^{(n)}$, $n\in\Z_p$.

The specific case when $p=1$ yields the well-known numeration systems in one real base $\beta>1$ as~defined by Rényi~\cite{Renyi57}.  
These systems  define a sofic $\beta$-shift precisely when the base $\beta$ is a Parry number. 
In analogy to this case we say that the alternate base $\BB$ with period $p\geq 1$ is a \textit{Parry alternate base}, if
all the sequences $\t^{(\ell)}$, $\ell\in\Z_p$, are eventually periodic.
If, moreover, $\t^{(\ell)}\in\F$ for all $\ell\in\Z_p$, we speak about \textit{simple Parry alternate base}.

Note that~\eqref{eq:posunrozvoje} for an alternate base $\BB$ implies
\begin{equation}\label{eq:posunrozvojealternate}
d_{\BB}(z) =  z_{1}z_{2}z_{3}\cdots \iff
d_{\BB}(z/\delta) =  0^pz_{1}z_2z_2\cdots 
\end{equation}
and 
$\frac{1}{\delta}\Fin(\BB)\subset\Fin(\BB)$.
We thus have
a simple consequence of Lemma~\ref{l:Fsoupnute}.

\begin{corollary}\label{c:vsechnyF}
Let $\BB=(\beta_1,\beta_2,\dots,\beta_p)$ be an alternate base.  
Then  for any $\ell\in\Z_p$ the~base
$\BB^{(\ell)}$ satisfies Property (PF) or Property (F), if and only if
$\BB$ satisfies Property (PF) or Property (F), respectively.
\end{corollary}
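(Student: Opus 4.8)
The plan is to exploit the periodicity $\BB^{(n)}=\BB^{(n+p)}$ together with Lemma~\ref{l:Fsoupnute}, which already lets us pass from a base to any of its suffixes. Since the statement is an equivalence, I would treat the two implications separately, and I expect both to reduce to a single invocation of Lemma~\ref{l:Fsoupnute}, with periodicity supplying the return trip in one direction.

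For the forward implication, suppose $\BB=\BB^{(1)}$ satisfies Property (PF) (respectively~(F)). Then Lemma~\ref{l:Fsoupnute} applied with $k=\ell$ immediately yields that $\BB^{(\ell)}$ satisfies Property (PF) (respectively~(F)). No periodicity is needed here.

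For the reverse implication, suppose instead that $\BB^{(\ell)}$ satisfies the property for some fixed $\ell\in\Z_p$. I would now apply Lemma~\ref{l:Fsoupnute} to the Cantor real base $\BB^{(\ell)}=(\beta_{\ell+n-1})_{n\geq 1}$ playing the role of the base ``$\BB^{(1)}$'' of that lemma. This gives that $(\BB^{(\ell)})^{(k)}=\BB^{(\ell+k-1)}$ satisfies the property for every $k\geq 1$. It then remains only to choose $k$ so that the index $\ell+k-1$ is congruent to $1$ modulo $p$; the choice $k=p-\ell+2$ gives $\ell+k-1=p+1\equiv 1\pmod p$ and satisfies $k\geq 2$ for every $\ell\in\Z_p$. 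By periodicity $\BB^{(\ell+k-1)}=\BB^{(1)}=\BB$, so $\BB$ satisfies the property as claimed.

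The only point requiring care is the bookkeeping of indices modulo $p$: one must check that $(\BB^{(\ell)})^{(k)}$ coincides, as a sequence, with $\BB^{(\ell+k-1)}$, and that the chosen shift $k$ is legitimate (positive). I do not foresee any genuine obstacle beyond this index-chasing, since the substantive content—relating the sets $\Fin$ in adjacent bases through the dilation $\frac1{\beta_1}\Fin(\BB^{(2)})\subset\Fin(\BB^{(1)})$ and the greedy algorithm—has already been absorbed into Lemma~\ref{l:Fsoupnute}.
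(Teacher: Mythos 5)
Your proposal is correct and follows essentially the same route as the paper, which presents the corollary as a direct consequence of Lemma~\ref{l:Fsoupnute} combined with the periodicity $\BB^{(n)}=\BB^{(n+p)}$. You have merely made explicit the index-chasing (applying the lemma to $\BB^{(\ell)}$ and choosing the shift $k=p-\ell+2$ to return to $\BB^{(1)}$) that the paper leaves implicit.
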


\section{Main results}\label{sec:mainresults}

We focus on the finiteness property of alternate bases.
We first provide necessary conditions on an~alternate base in order to satisfy Property (PF) and Property (F).
For that, we need to recall some number theoretical notions. 
An algebraic integer $\delta>1$ is a~\textit{Pisot number}, if all its conjugates other than $\delta$ belong to the interior of the unit disc; an algebraic integer $\delta>1$ is a~\textit{Salem number}, if all its~conjugates other than $\delta$ belong to the unit disc and at least one lies on its boundary. If $\delta$ is an algebraic number of~degree~$d$, then the minimal subfield of~$\C$ containing $\delta$ is of the form
$$
\Q(\delta)=\left\{c_0+c_1\delta+\cdots +c_{d-1}\delta^{d-1} \, : \, c_i\in\Q\right\}.
$$
The number field $\Q(\delta)$ has precisely $d$ embeddings into $\C$, namely the field isomorphisms ${\psi:\Q(\delta)\to\Q(\gamma)}$ induced by $\psi(\delta)=\gamma$, where $\gamma$ is a conjugate of $\delta$. Note that one of the~embeddings is the identity map.

The necessary condition is formulated as Theorem~\ref{thm:necessary}. We provide its proof in Section~\ref{sec:necessary} divided into several propositions.

\begin{theorem}\label{thm:necessary}
    Let $\BB=({\beta_1,\beta_2,\dots,\beta_p})$ be an alternate base. 
    \begin{enumerate}
    \item
    If $\BB$ satisfies Property (PF), then 
    $\delta=\prod_{i=1}^p\beta_i$ is a Pisot or a Salem number and 
    $\beta_i\in\Q(\delta)$ for~${i \in \Z_p}$.
    \item 
    If, moreover, $\BB$ satisfies Property (F), then 
    $\BB$ is a simple Parry alternate base, and
    for any non-identical embedding $\psi$ of $\Q(\delta)$ into $\C$ the vector $(\psi(\beta_1),\dots,\psi(\beta_p))$ is not positive.
    \end{enumerate}
    \end{theorem}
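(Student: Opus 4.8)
The plan is to transpose the Frougny--Solomyak argument for a single Rényi base into the periodic setting, driven by two mechanisms: an algebraic identity obtained by clearing denominators in $\val$, which fixes the field and forces integrality, and a Galois-conjugate boundedness argument confining the conjugates of~$\delta$ to the closed unit disc. By Corollary~\ref{c:vsechnyF} it suffices to argue for $\BB=\BB^{(1)}$, and I would move freely among the shifted bases~$\BB^{(\ell)}$. The first step is a structural description of $\Fin(\BB)$ under~(PF): writing $n=kp+r$ with $1\le r\le p$, the weight $1/(\beta_1\cdots\beta_n)$ factors as $\delta^{-k}\gamma_r$ with $\gamma_r=1/(\beta_1\cdots\beta_r)$, so every $\val(\xx)$, $\xx\in\F$, lies in the $\Z[\delta^{-1}]$-module $M$ generated by $\gamma_1,\dots,\gamma_p$. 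Decomposing any $\xx\in\F$ with $\val(\xx)<1$ into single-digit contributions and adding them one at a time --- all partial sums staying below the total, hence below~$1$ --- Property~(PF) yields the identity $\Fin(\BB)=\val(\F)\cap[0,1)$.

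For the first assertion I would then extract the algebraic data. The module $M$ is finitely generated over $\Z[\delta^{-1}]$ and stable under multiplication by $\delta^{-1}$ (this is $\tfrac1\delta\Fin(\BB)\subset\Fin(\BB)$ from~\eqref{eq:posunrozvojealternate}); the goal is to turn the termination of additive carries into a monic integer relation for~$\delta$. Repeatedly adding $\delta^{-N}$ to itself forces, by~(PF), a carry that resolves after finitely many positions, and tracking it to its end should produce $\delta^{m}=c_{m-1}\delta^{m-1}+\cdots+c_0$ with $c_j\in\Z$; hence $\delta$ is an algebraic integer and $\delta\in\Z[\delta^{-1}]$. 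Reading the same carry relations across one period expresses each $\gamma_r$, and therefore each $\beta_i=\gamma_{i-1}/\gamma_i$, inside $\Q(\delta)$. With $\beta_i\in\Q(\delta)$ any non-identical embedding $\psi$ sends $\delta$ to a conjugate $\psi(\delta)=\prod_i\psi(\beta_i)$, and for $\xx\in\F$ one has $\psi(\val(\xx))=\sum_n x_n/(\psi(\beta_1)\cdots\psi(\beta_n))$ (a finite sum, so $\psi$ distributes over it). Since admissibility bounds the digits, $|\psi(\delta)|>1$ makes the weights summable and $\psi(\Fin(\BB))$ bounded; feeding in $m\delta^{-k}\in\Fin(\BB)$ (with $0\le m<\delta^k$) gives conjugates of modulus $\approx(\delta/|\psi(\delta)|)^k$, which explode when $1<|\psi(\delta)|<\delta$. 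Closing the remaining range and separating Pisot from the boundary Salem case is the delicate point, for which I would invoke a discreteness property of $\val(\F)$ coming from the lexicographic admissibility of Proposition~\ref{p:admisibilitaF} together with the norm constraint $\prod_\psi|\psi(\delta)|\in\Z\setminus\{0\}$, concluding $|\psi(\delta)|\le1$ for all conjugates, i.e.\ $\delta$ Pisot or Salem.

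The second assertion adds subtraction, and here the arguments are clean. First, because $\delta$ is an algebraic integer we have $1=c_{m-1}\delta^{-1}+\cdots+c_0\delta^{-m}\in G$, where $G=\val(\F)-\val(\F)$; and Property~(F), via the $\delta^{-1}$-scaling of~\eqref{eq:posunrozvojealternate}, upgrades the structural identity to $\Fin(\BB)=G\cap[0,1)$. If $\t^{(1)}=d_\BB(1)$ were infinite, then the tail $1-\val(t_1\cdots t_m0^\omega)=\val(0^m t_{m+1}t_{m+2}\cdots)$ would lie in $G\cap[0,1)=\Fin(\BB)$ although its greedy expansion is infinite --- a contradiction; thus $\t^{(\ell)}\in\F$ for every $\ell$ and $\BB$ is simple Parry. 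Second, suppose a non-identical embedding $\psi$ had $(\psi(\beta_1),\dots,\psi(\beta_p))$ positive. Every $x\in\Fin(\BB)$ has a non-negative greedy expansion, so $\psi(x)=\sum_n x_n/(\psi(\beta_1)\cdots\psi(\beta_n))\ge0$; applying this to $1-\delta^{-k}\in\Fin(\BB)$ gives $1-\psi(\delta)^{-k}\ge0$, forcing $\psi(\delta)\ge1$, whence $\psi(\delta)=1$ by the first assertion. But then $\psi(1-\delta^{-k})=0$ for every~$k$, contradicting the injectivity of $\psi$ on the distinct elements $1-\delta^{-k}$. Hence no such $\psi$ exists.

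The main obstacle I anticipate is concentrated in the first assertion: establishing that $\delta$ is an algebraic integer with $\beta_i\in\Q(\delta)$, and then pinning all conjugates inside the closed unit disc, using only~(PF). Unlike the single-base situation, (PF) does not supply a finite expansion of~$1$ from which a minimal polynomial can be read off, so the integrality and the field membership must be wrung out of the carry dynamics on the finitely generated module~$M$; and the conjugate analysis must rule out conjugates of modulus $\ge\delta$ and handle the Salem boundary, which the naive boundedness estimate alone does not settle. By contrast, once integrality and the field are in hand, the subtraction-based steps of the second assertion are comparatively routine.
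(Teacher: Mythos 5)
Your plan correctly identifies where the difficulty lies --- Item 1 --- but at each of the three hard points it offers a placeholder rather than an argument, and in each case the missing idea is exactly the content of the paper's proof. For integrality: a relation obtained by equating a finite expansion with the number it represents is, after clearing denominators as in \eqref{eq:uprava}, a linear relation over $\Z[\delta]$ among the $p$ quantities $v_j=\prod_{i=j+1}^{p}\beta_i$, \emph{not} a polynomial equation in $\delta$ alone; so ``tracking the carry to its end'' cannot by itself produce $\delta^m=c_{m-1}\delta^{m-1}+\cdots+c_0$. One must eliminate the $p$ unknowns $v_j$, which requires $p$ independent relations. The paper manufactures exactly this in Lemma~\ref{l:polynomy}: Property (PF) applied in each shifted base $\BB^{(\ell)}$ to the number $\ceil{\delta^m\beta_1}\bigl(\delta^m\beta_p\beta_1\bigr)^{-1}$ gives one relation per $\ell$, these assemble into a matrix equation $M\vec{v}^{(1)}=\vec{0}$, and a degree count shows $\det M$ is a monic integer polynomial in $\delta$, whence $\delta$ is an algebraic integer. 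Your claim that the carry relations also put each $\beta_i$ in $\Q(\delta)$ has the same defect: $M\vec{v}^{(1)}=\vec{0}$ only places $\vec{v}^{(1)}$ in the kernel of a matrix over $\Q(\delta)$, and one needs that kernel to be one-dimensional before concluding $\beta_i=v_{i-1}^{(1)}/v_i^{(1)}\in\Q(\delta)$; the paper gets simplicity of the eigenvalue $0$ from Perron--Frobenius (for large $c$ the matrix $M+cI$ is non-negative and irreducible with positive eigenvector $\vec{v}^{(1)}$). Neither the elimination step nor the simplicity step appears in your proposal.

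For Pisot/Salem the gap is the one you flag yourself: boundedness of $\psi(\Fin(\BB))$ tested against the elements $m\delta^{-k}$ rules out only conjugates $\gamma$ with $1<\abs{\gamma}<\delta$, and conjugates of modulus $\geq\delta$ are not excluded; ``discreteness of $\val(\F)$ plus the norm constraint'' is not an argument. The paper's Proposition~\ref{prop:nec_F_5} eliminates all $\abs{\gamma}>1$ simultaneously via a cancellation you are missing: by (PF) the number $x=\ceil{\delta^m}\delta^{-(m+1)}$ has a finite expansion, and grouping its digits period by period gives $\ceil{\delta^m}-\delta^m=\sum_{j} d_j\delta^{-(j-m)}$ with every $d_j$ in one fixed finite alphabet $\D$; applying $\psi$ and subtracting, the integer $\ceil{\delta^m}$ cancels, so $\delta^m-\gamma^m$ is expressed by two series bounded uniformly in $m$ whenever $\abs{\gamma}>1$, contradicting $\limsup_{m\to+\infty}\abs{\delta^m-\gamma^m}=+\infty$. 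Your Item 2, by contrast, is essentially workable: the identity $\Fin(\BB)=\bigl(\val(\F)-\val(\F)\bigr)\cap[0,1)$ under (F), the tail argument for simple Parryness, and the $1-\delta^{-k}$ argument against a positive conjugate vector can all be made rigorous (the paper's Propositions~\ref{l_nec_F_1} and~\ref{prop:nec_F_6} reach the same conclusions more directly, without needing $1\in\val(\F)-\val(\F)$). But since every step of your Item 2 presupposes the integrality, the field membership and the conjugate bound of Item 1, the gaps above leave the proposal without a proof of the theorem.
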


As the main result of this paper we provide a class of alternate Parry bases 
satisfying the finiteness property. It is a generalization of the class given in \cite{FS92}. Our proof follows similar ideas as in \cite{AFMP03}, but requires much deeper techniques.

\begin{theorem}\label{thm:nerovnosti}
Let $\BB$ be an alternate base with period $p$ such that the corresponding expansions $\t^{(\ell)}$ of 1, as defined in~\eqref{eq:t}, satisfy
\begin{equation}\label{eq:GFS} 
t_1^{(\ell)}\geq  \ t_2^{(\ell - 1)}\geq \   t_3^{(\ell-2)}\geq \  \cdots \quad \text{
    for every } \ \ell \in \mathbb{Z}_p.
\end{equation} 
Then $\BB$ is a Parry  alternate base and  has property (PF). 

If, moreover, $\BB$ is a simple Parry alternate base, then it satisfies Property (F).
\end{theorem}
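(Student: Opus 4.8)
\noindent The plan is to first extract the Parry property directly from \eqref{eq:GFS}, and then to build a terminating, value-preserving rewriting system that realizes addition (for (PF)) and, under the simple Parry hypothesis, subtraction (for (F)). For the Parry property, fix $n\in\Z_p$ and a residue $k_0\in\{1,\dots,p\}$. For every $j\ge0$ the digit $t_{k_0+jp}^{(n)}$ occurs as the $(k_0+jp)$-th entry of the anti-diagonal $\bigl(t_k^{(\ell-k+1)}\bigr)_{k\ge1}$ with $\ell\equiv n+k_0-1\pmod p$, since its base index $\ell-(k_0+jp)+1\equiv n\pmod p$. By \eqref{eq:GFS} this anti-diagonal is non-increasing; being a sequence of non-negative integers it is eventually constant, hence so is its subsequence $\bigl(t_{k_0+jp}^{(n)}\bigr)_{j\ge0}$. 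As this holds for every residue $k_0$, each of the $p$ step-$p$ arithmetic subsequences of $\t^{(n)}$ is eventually constant, so $\t^{(n)}$ is eventually periodic. Therefore all $\t^{(\ell)}$ are eventually periodic and $\BB$ is a Parry alternate base.

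For Property (PF) it suffices, by Corollary~\ref{c:vsechnyF}, to argue for $\BB$ itself. Writing $q_n=\beta_1\cdots\beta_n$, the machinery rests on the value-preserving identity extracted from $d_{\BB^{(m)}}(1)=\t^{(m)}$, namely $\frac{1}{q_{m-1}}=\sum_{k\ge1}t_k^{(m)}/q_{m+k-1}$: a single unit at position $m-1$ may be exchanged for the word $\t^{(m)}$ placed from position $m$ onwards. Since each $\t^{(m)}$ is eventually periodic, folding its periodic tail turns this into a finite linear relation among the weights $1/q_n$, which lets me encode all admissibility-restoring exchanges by finitely many \emph{local} (bounded-window) rules that can be organized so as to map $\F$ into $\F$ without ever creating an infinite tail. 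Given $x,y\in\Fin(\BB)$ with $x+y<1$, the digitwise sum $\zz=d_\BB(x)+d_\BB(y)\in\F$ has $\val\zz=x+y\in[0,1)$ but need not be admissible. Thus (PF) reduces to the normalization statement: every $\zz\in\F$ with $\val\zz\in[0,1)$ can be rewritten by these local rules to an admissible element of $\F$, which by Proposition~\ref{p:admisibilitaF} is forced to equal $d_\BB(\val\zz)$, whence $\val\zz\in\Fin(\BB)$.

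The core is termination. By Proposition~\ref{p:admisibilitaF}, non-admissibility of a finite $\zz$ means $z_mz_{m+1}\cdots\succeq_{lex}\t^{(m)}$ for some $m$; the matching rule carries a unit leftward, raising position $m-1$ by one while lowering, through the local recurrence, a bounded window to its right. Because $\val\zz<1$, no carry can reach position $0$ (that would force $\val\ge1$), so all activity stays in a region bounded on the left, and with an a priori bound on the digits the process is confined to finitely many positions. The \textbf{main obstacle} is that a leftward carry may spawn a new violation one position further left, and one must rule out an infinite regeneration. This is precisely where \eqref{eq:GFS} is indispensable: the non-increasing anti-diagonal $t_1^{(\ell)}\ge t_2^{(\ell-1)}\ge\cdots$ ensures that $\t^{(m-1)}$ dominates, entry by entry, the residue left behind after carrying at position $m$, so that each carry strictly lowers the tail on which it acts and cannot produce a strictly worse violation to its left. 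Turning this into a genuinely well-founded measure — e.g.\ a lexicographic pairing of the leftmost violated position with a weighted digit sum — that provably decreases under every rule is the technical heart of the argument and the step I expect to be hardest; it is the alternate-base counterpart of the reasoning in \cite{AFMP03}.

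For Property (F) add the hypothesis that $\BB$ is simple Parry, so every $\t^{(\ell)}\in\F$. For $x,y\in\Fin(\BB)$ with $x-y\in[0,1)$, form the digitwise difference $\zz=d_\BB(x)-d_\BB(y)$, a finitely supported integer sequence (now with possible negative entries) of value $x-y$. The same identity now also provides \emph{borrow} rules that clear negative digits by exchanging a unit at position $m-1$ for $\t^{(m)}$ to the right; here finiteness of each $\t^{(\ell)}$ is exactly what keeps the borrow local and the support finite, which explains why (F) requires simple Parry whereas (PF) does not — indeed, by Theorem~\ref{thm:necessary} no non-simple base can enjoy (F). Running carries and borrows together and invoking the same \eqref{eq:GFS}-driven measure, extended to signed configurations, reduces $\zz$ to an admissible element of $\F$, necessarily $d_\BB(x-y)$, so $x-y\in\Fin(\BB)$. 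Combined with the addition case this yields Property (F), and by Corollary~\ref{c:vsechnyF} all conclusions pass to every shifted base $\BB^{(\ell)}$.
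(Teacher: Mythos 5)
Your Parry-property argument is correct and coincides with the paper's, and your overall strategy for (PF) -- digitwise sum, then normalization by value-preserving rewritings derived from the identities $\frac{1}{q_{m-1}}=\sum_{k\ge1}t_k^{(m)}/q_{m+k-1}$, with termination as the crux -- is also the paper's strategy. But the proof has a genuine gap exactly at the point you yourself flag as the ``technical heart'': no well-founded measure is ever constructed, and the local mechanism you conjecture in its place is in fact false. You claim that \eqref{eq:GFS} ensures a carry ``cannot produce a strictly worse violation to its left,'' and suggest pairing the leftmost violated position with a weighted digit sum. The paper's own worked addition (for the base of Example~\ref{ex:1}) refutes this: the non-admissible string $000040010^\omega$, whose leftmost violation is at position $5$, is rewritten into $000111010^\omega$, whose leftmost violation is at position $4$ -- strictly further left. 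So rewritings do spawn violations to the left of the one being treated, and your proposed measure need not decrease. What the paper does instead is global, not local: it constructs a $p$-periodic weight function $g(\aa)=\sum_{n\geq1} w_n a_n$ with positive integer weights such that every rewriting step is lexicographically strictly increasing yet weight non-increasing (Definition~\ref{d:weight}, Propositions~\ref{typ1} and~\ref{c:weight}). The existence of such weights is the real content hidden behind \eqref{eq:GFS}: it is a Perron--Frobenius argument (Lemma~\ref{existujeU}) applied to the matrix $K_{\ell,j}=T^{(\ell+1)}_{j-\ell}$ of residue-class digit sums of the $\t^{(\ell)}$, using the inequalities \eqref{Tecka} that \eqref{eq:GFS} induces on those sums. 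Termination then follows not from any local decrease but from a compactness argument (Lemma~\ref{l:topo}): a lexicographically strictly increasing sequence in $\F$ with bounded weight cannot exist. None of this machinery is supplied, or replaced by anything workable, in your outline.

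The subtraction part has a second gap of the same kind. Your plan -- ``borrow rules'' on signed digit configurations, ``invoking the same \eqref{eq:GFS}-driven measure, extended to signed configurations'' -- is unsubstantiated: lexicographic comparison, the weight bound, and Lemma~\ref{l:topo} all live on non-negative strings, and nothing you say shows that a rewriting system with negative digits terminates or even keeps the relevant quantities under control. The paper avoids signed rewriting entirely: Proposition~\ref{p:PF=F} shows that for a simple Parry alternate base (PF) already implies (F), by induction on the digit sum of the subtrahend $\yy$, where Lemma~\ref{l:claim} supplies, for every $j,k$, a finite non-negative representation of $\val(0^{j-1}10^\omega)$ having a nonzero digit at position $j+k$; this lets one absorb a single unit of $\yy$ into $\xx$, apply (PF), and recurse. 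To complete your route you would need either this reduction or a genuine termination proof for signed configurations; neither is present.
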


Note that in inequalities~\eqref{eq:GFS} we have upper indices in $\Z$, but they are always counted $\!\!\!\mod p$ and taking values in the set $\{1,2,\dots,p\}$.

The statement of Theorem~\ref{thm:nerovnosti} is obtained as a consequence of a sufficient condition for (PF) given in Section~\ref{sec:sufficient}. The sufficient condition is given in terms of rewriting rules of~non-admissible sequences (Theorem~\ref{thm:sufficient}). 
Then, in Section~\ref{sec:nerovnosti}, we prove that the class of bases satisfying the~inequalities~\eqref{eq:GFS} meets the assumptions of Theorem~\ref{thm:sufficient}, and thus possesses Property (PF).

Let us illustrate the above results on the favourite example of Charlier and Cisternino, given in~\cite{CC21}.

\begin{example}\label{ex:1}
    Consider 
    $\BB=
    (\beta_1,\beta_2)=
    \left(\frac{1+\sqrt{13}}2, \frac{5+\sqrt{13}}6\right)$. Expansions of 1 are of the form 
    $$
    d_{\BB^{(1)}}(1)=t_1^{(1)}t_2^{(1)}t_3^{(1)}0^\omega=2010^\omega \quad \text{ and }\quad d_{\BB^{(2)}}(1)=t_1^{(2)}t_2^{(2)}0^\omega=110^\omega.
    $$ 
    Consequently, $\BB$ is a simple Parry alternate base. The inequalities \eqref{eq:GFS} for $p=2$ are of the~form
    \begin{align*}
        &t_1^{(1)} \geq t_2^{(2)} \geq t^{(1)}_{3}\geq t_4^{(2)} \geq\cdots, &&\text{ in our case } \hspace{3em} 2 \geq 1 \geq 1\geq 0  \geq \cdots,
        \\  &t_1^{(2)} \geq t_2^{(1)} \geq t^{(2)}_{3}\geq t_4^{(1)}\geq \cdots, &&\text{ in our case } \hspace{3em} 1 \geq 0 \geq 0 \geq 0 \geq \cdots.
    \end{align*}
    Therefore, according to Theorem \ref{thm:nerovnosti}, the base $\BB$ has Property (F). 
    
    Note that the necessary conditions of Property (F) as presented in Theorem~\ref{thm:necessary} are satisfied. Indeed, 
    we have $\delta = \frac{1+\sqrt{13}}2\cdot\frac{5+\sqrt{13}}6=\frac{3+\sqrt{13}}2$, which is the positive root of $x^2-3x-1$. The other root of this polynomial is $\gamma =\frac{3-\sqrt{13}}2 \approx -0.303$, thus $\delta$ is a Pisot number.

    Obviously, $\beta_1,\beta_2 \in \Q(\delta)=\Q(\sqrt{13})$. The only non-identical embedding $\psi$ of the field $\Q(\sqrt{13})$ into $\C$ is the Galois automorphism induced by $\sqrt{13} \mapsto -\sqrt{13}$. Thus we have
    \begin{equation*}
    \psi(\beta_1)=\frac{1-\sqrt{13}}2 \approx -1.303, \hspace{5em}
    \psi(\beta_2)=\frac{5-\sqrt{13}}6 \approx 0.232,
    \end{equation*}
    therefore indeed $\left(\psi(\beta_1),\psi(\beta_2)\right)$ is not a positive vector.  
\end{example}

\section{Necessary conditions}\label{sec:necessary}

The proof of Theorem~\ref{thm:necessary} is divided into four propositions.  
Throughout this section we have an alternate base $\BB=({\beta_1,\beta_2,\dots,\beta_p})$  and 
 $\delta=\prod_{i=1}^p \beta_i$. In the demonstration, we will use that 
 $\prod_{i=1}^{mp}\beta_i=\delta^{m}$, which gives 
 a simple 
 expression that holds for any $n\in\N$ and any sequence of digits $(c_k)_{k=1}^{pn}$, namely
\begin{equation}\label{eq:uprava}
\sum_{k=1}^{pn} \frac{c_k}{\prod_{i=1}^k \beta_i}  = \frac{1}{\delta^n} \sum_{j=1}^p\ \Bigg(\prod_{i=j+1}^p \beta_i\Bigg) \sum_{k=0}^{n-1} \delta^{n-1-k}c_{kp+j}.
\end{equation}

The first step is a rather technical lemma necessary for the proof of Proposition~\ref{prop:nec_F_4}.

\begin{lemma}\label{l:polynomy} Let  $\BB=({\beta_1,\beta_2,\dots,\beta_p})$  be an alternate base satisfying Property (PF). Then for any sufficiently large $n \in \N$ there exist  polynomials $g_1, g_2, \ldots , g_p \in \Z[X] $ such that 

\begin{itemize}
\item the degree of $g_j$ is at most $n-1$ for every $j \in \{1,2,\ldots,  p-1\}$;
\item $g_p$ is monic  and  its degree is $n$;
\item all coefficients of $g_j$ are non-negative for every $j \in\{2,\ldots,p\}$;  
\item  the product of the  row vector $\bigl(g_1(\delta), \ldots, g_p(\delta)\bigr)$  and the column vector 
$\vec{v} =\! \bigg(\!\prod\limits_{i=2}^p \beta_i, \prod\limits_{i=3}^p \beta_i,
\ldots,  \prod\limits_{i=p}^p \beta_i, 1\bigg)^T $ is zero. 
\end{itemize}
  \end{lemma}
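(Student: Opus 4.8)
The plan is to read the four conditions through identity~\eqref{eq:uprava}, enriched by a digit at position~$0$. Writing $g_j(X)=\sum_k c_{kp+j}X^{\,n-1-k}$ for the digits in residue class $j\bmod p$ and adjoining a leading term $X^n$ to $g_p$, the product $(g_1(\delta),\dots,g_p(\delta))\cdot\vec v$ equals $\delta^n\big(c_0+\sum_{k=1}^{pn}c_k/\prod_{i=1}^k\beta_i\big)$ with $c_0=1$. Hence the lemma is equivalent to producing, for large $n$, a finite integer string $c_0c_1\cdots c_{pn}$ with $c_0=1$, with $c_k\ge 0$ whenever $k\not\equiv 1\pmod p$, and with $\sum_{k=0}^{pn}c_k/\prod_{i=1}^k\beta_i=0$: the monic $g_p$ corresponds to the integer part $c_0=1$, the freedom of sign of $g_1$ to residue class~$1$, and the non-negativity of $g_2,\dots,g_p$ to the remaining residues. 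In short, I must represent the integer $1$ as the negative of the value of a finite digit string that is non-negative off residue class~$1$.

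To manufacture such a string I will force a carry past $1$ and then finitize it with Property~(PF). Put $x=\floor{\beta_1}/\beta_1$ and $y=1/\beta_1$; both lie in $\Fin(\BB)\cap[0,1)$ (their greedy expansions are $\floor{\beta_1}0^\omega$ and $10^\omega$), and $x+y=(\floor{\beta_1}+1)/\beta_1=1+\xi$ with $\xi\in(0,1)$. Because $\tfrac1\delta\Fin(\BB)\subseteq\Fin(\BB)$, for any $s$ with $\delta^s>1+\xi$ the numbers $x/\delta^s,y/\delta^s$ lie in $\Fin(\BB)$ and their sum $(1+\xi)/\delta^s$ lies in $[0,1)$; Property~(PF) then provides a finite greedy expansion $w=d_\BB\big((1+\xi)/\delta^s\big)$. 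Multiplying by $\delta^s$ yields the key identity $1+\xi=\delta^s\,\val(w)$, i.e.\ the number $1+\xi$ equals the value of the non-negative finite word $w$ with its radix point shifted $ps$ places to the left.

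I then form the value-zero identity $\delta^s\,\val(w)-\val\big((\floor{\beta_1}+1)0^\omega\big)=0$, whose subtrahend has its only digit $\floor{\beta_1}+1$ at position~$1$, i.e.\ in residue class~$1$. The corresponding signed string carries $+w_m$ at each position $m-ps$ and the single negative digit $-(\floor{\beta_1}+1)$ at position~$1$. Since all $w_m\ge 0$, every coefficient away from residue class~$1$ is non-negative, the lone negative entry is confined to residue class~$1$ (which $g_1$ is permitted), and the overflow of $x+y$ past the integer $1$ deposits a single unit at position~$0$, producing the monic leading term $\delta^n$ of $g_p$. Grouping the coefficients by residue and matching degrees via~\eqref{eq:uprava} gives the four asserted properties, with $n$ taken large enough that the (fixed-length) word $w$ fits into positions $0,\dots,pn$.

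The main obstacle is the carry bookkeeping in the last step: I must guarantee that the integer part of $\delta^s\val(w)$ is carried by a \emph{single} digit at position~$0$ and that no ``super-integer'' positions $-1,-2,\dots$ occur, since any nonzero digit there would push some $g_j$ to degree $\ge n$ and violate the degree bounds. This amounts to controlling how far the greedy carry propagates leftward, and it is clean exactly when $1+\xi$ stays below the base $\beta_p$ attached to position~$0$; when $\beta_p$ is small the carry spills further and the construction must be refined---e.g.\ by choosing the residue at which the overflow is triggered (legitimate since every $\BB^{(\ell)}$ inherits Property~(PF) by Corollary~\ref{c:vsechnyF}) and absorbing the excess carry into the sign-free residue class~$1$. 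Making this propagation analysis uniform in the (sufficiently large) $n$ is the technical heart of the argument.
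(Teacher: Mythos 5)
Your reduction of the lemma to producing a finite integer string $c_0c_1\cdots c_{pn}$ of value zero with $c_0=1$ and non-negative digits off residue class $1$ is correct, and your overall strategy (finitize a suitable number via Property (PF), then read the polynomial identity off~\eqref{eq:uprava}) is the same in spirit as the paper's. But your construction has a genuine gap, which you locate yourself and do not close: it only works when the overflow $1+\xi=(\floor{\beta_1}+1)/\beta_1$ satisfies $1+\xi<\beta_p$. Indeed, writing $w=d_{\BB}\bigl((1+\xi)/\delta^s\bigr)$, the greedy algorithm gives $w_1=\cdots=w_{ps-1}=0$ and $w_{ps}=1$ precisely when $(1+\xi)\prod_{i=1}^{ps-1}\beta_i<\delta^s$, i.e.\ precisely when $1+\xi<\beta_p$; otherwise $w$ has nonzero digits that land, after your shift, at positions $-1,-2,\dots$, and a digit at position $-j$ contributes $\delta^n v_{p-j}$, i.e.\ a degree-$n$ term in $g_{p-j}$ (or degree $n+1$ in $g_p$ when $j=p$), destroying the degree bounds. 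This failure mode is not exotic: $(\floor{\beta_1}+1)/\beta_1>1$ always, so the construction breaks whenever $\beta_p$ is close to $1$. Your proposed repairs are too vague to count as a proof, and the first one (triggering the overflow at another residue $\ell$, which needs $(\floor{\beta_\ell}+1)/\beta_\ell<\beta_{\ell-1}$) provably fails in general: if every $\beta_i$ is close to $1$, then $(\floor{\beta_\ell}+1)/\beta_\ell$ is close to $2$ while $\beta_{\ell-1}$ is close to $1$, for every $\ell$. ``Absorbing the excess carry into residue class $1$'' would require rewriting a value $c\prod_{i=p-j+1}^{p}\beta_i$ as a finite string supported on positions $\geq 1$, which is essentially the problem you are trying to solve.

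The paper closes exactly this gap by a different choice of the number to be finitized. It works in the shifted base $\BB^{(p)}$ (legitimate by Corollary~\ref{c:vsechnyF}) with $z=\ceil{\delta^m\beta_1}\,(\delta^m\beta_p\beta_1)^{-1}$, where $m$ is chosen so large that $\delta^m\beta_1(\beta_p-1)\geq 1$; this forces $\ceil{\delta^m\beta_1}<\delta^m\beta_p\beta_1$, hence $1/\beta_p\leq z<2/\beta_p<1$. Since $z$ is an integer multiple of $(\delta^m\beta_p\beta_1)^{-1}\in\Fin\bigl(\BB^{(p)}\bigr)$, Property (PF) yields $d_{\BB^{(p)}}(z)=1z_1z_2\cdots z_{pn}0^\omega$: the leading digit is exactly $1$ by the two-sided bound on $z$, uniformly in the base, so no carry analysis is needed at all; the single negative coefficient $-\ceil{\delta^m\beta_1}$ is then placed into $g_1$ directly from the left-hand side of~\eqref{eq:skorohotove}. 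In short, the paper tames the carry by shrinking the overflow below $2/\beta_p$ through the choice of $m$, rather than by hoping that $\beta_p$ is large; supplying an argument of this kind is exactly what is missing from your proposal.
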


\begin{proof} 
Choose $m \in \N$ such that  $\delta^m \beta_1 \leq \ceil{\delta^m \beta_1} < \delta^m \beta_p\beta_1$. 
Hence $z :=  {\ceil{\delta^m \beta_1}}({\delta^m \beta_p\beta_1})^{-1} $ satisfies inequalities  $\frac{1}{\beta_p}\leq z <1$.  
It is easy to see that $({\delta^m \beta_p\beta_1})^{-1}\in  \Fin\big(\BB^{(p)}\big)$, since its $\BB^{(p)}$-expansion is of~the~form $0^{mp+1}10^\omega$. 
By Corollary \ref{c:vsechnyF}, Property (PF) of the base $\BB$ implies that also the base ${\BB^{(p)} = (\beta_p, \beta_{1}, \ldots, \beta_{p-1})}$ has Property~(PF). 
Thus any integer multiple of $({\delta^m \beta_p\beta_1})^{-1}$, in particular the~number $z$, has a finite $\BB^{(p)}$-expansion. 
We easily see that 
$$
{\frac{1}{\beta_p}\leq z=\frac{\ceil{\delta^m \beta_1}}{{\delta^m \beta_p\beta_1}}< \frac{\delta^m \beta_1+1}{\delta^m \beta_p\beta_1}=\frac{1}{\beta_p} + \frac{1}{\delta^m\beta_p\beta_1}<\frac2{\beta_p}},
$$ 
and~thus the greedy algorithm 
returns the $\BB^{(p)}$-expansion of $z$ of the form ${d_{\BB^{(p)}}(z) = 1z_1z_2\cdots z_{pn} 0^\omega}$, $z_i \in \N$, for sufficiently large $n\in \N, n> m$. Using~\eqref{eq:uprava}, we derive 
$$
\frac{\ceil{\delta^m \beta_1}}{\delta^m \beta_p\beta_1} =  \frac{1}{\beta_p} + \frac{1}{\beta_p} \sum_{k=1}^{pn} \frac{z_k}{\prod_{i=1}^k \beta_i}  = \frac{1}{\beta_p} + \frac{1}{\delta^n\beta_p} \sum_{j=1}^p\ \Bigg(\prod_{i=j+1}^p \beta_i\Bigg) \sum_{k=0}^{n-1} \delta^{n-1-k}z_{kp+j}.
$$
In order to simplify notation, denote the $j$-th component of the vector  $\vec{v}$  by $v_j = \prod_{i=j+1}^p\beta_i$. 
Multiplying the above equality by $\delta^n\beta_p$ yields
\begin{equation}\label{eq:skorohotove}
\ceil{\delta^m \beta_1}\delta^{n-m-1}v_1 =  \delta^nv_p + \sum_{j=1}^p\ v_j\sum_{k=0}^{n-1} \delta^{n-1-k}z_{kp+j}. 
\end{equation}
Now we can define polynomial $g_j$, $j\in\Z_p$, as follows.
$$
\begin{aligned}
g_1(X) &:= -\ceil{\delta^m \beta_1}X^{n-m-1} + \sum_{k=0}^{n-1} X^{n-k-1}z_{kp+1},\\ 
g_j(X) &:=  \sum_{k=0}^{n-1} X^{n-k-1}z_{kp+j}\quad\text{ for }j \in\{2,3,\ldots, p-1\},\\ 
g_p(X) &:= X^n + \sum_{k=0}^{n-1} X^{n-k-1}z_{kp+p}. 
\end{aligned}
$$
With this notation, equality~\eqref{eq:skorohotove} is of the form $0 = g_1(\delta)v_1 + g_2(\delta)v_2+\cdots + g_p(\delta)v_p$. One easily verifies all required properties of polynomials $g_j$, $j\in\Z_p$.
\end{proof}

The proof of the  following proposition is  a modification of  the proof of Theorem~14 in~\cite{CCMP22}. 

\begin{proposition}
\label{prop:nec_F_4}
Let $\BB=({\beta_1,\beta_2,\dots,\beta_p})$ be an alternate base 
with Property (PF). Then $\delta=\prod_{i=1}^p \beta_i$ is an algebraic integer and $\beta_i\in\Q(\delta)$ for all $i\in\Z_p$. 
\end{proposition}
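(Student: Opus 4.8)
The plan is to squeeze out of Lemma~\ref{l:polynomy} not one but a whole family of integer relations annihilating the fixed real vector $\vec{v}=(v_1,\dots,v_{p-1},1)^T$, and then to eliminate its entries to obtain a polynomial over $\Z$ that vanishes at $\delta$. First I would observe that the construction behind Lemma~\ref{l:polynomy} does not depend on the particular choice made there: it applies verbatim to the numbers $(\delta^m\beta_p\beta_1)^{-1}$ for every sufficiently large exponent $m$ (each lies in $\Fin(\BB^{(p)})$, so the relevant integer multiple $z$ has a finite $\BB^{(p)}$-expansion by Property~(PF)). For a common large $n$ this produces a family of integer polynomial row vectors $\vec{g}^{(m)}(X)=(g_1^{(m)}(X),\dots,g_p^{(m)}(X))\in\Z[X]^p$ inheriting the two crucial features of the lemma: $g_p^{(m)}$ is monic of degree $n$ while the other entries have degree at most $n-1$, and $\vec{g}^{(m)}(\delta)\dotp\vec{v}=\sum_{j=1}^p g_j^{(m)}(\delta)v_j=0$ holds with the \emph{same} suffix-product vector $\vec{v}$, a fixed nonzero vector of $\R^p$ independent of $m$.

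Next I would pick $p$ exponents $m_1,\dots,m_p$ for which the polynomial vectors $\vec{g}^{(m_1)}(X),\dots,\vec{g}^{(m_p)}(X)$ are linearly independent over $\Q(X)$, and form $A(X)=\bigl(g_j^{(m_s)}(X)\bigr)_{1\le s,j\le p}\in\Z[X]^{p\times p}$. Its determinant $D=\det A$ is then a nonzero element of $\Z[X]$, whereas $A(\delta)\vec{v}=0$ with $\vec{v}\neq 0$ forces $D(\delta)=0$; hence $\delta$ is algebraic. Since $A(\delta)$ kills the single direction $\vec{v}$, once its rank equals $p-1$ the adjugate (Cramer's rule) writes each $v_j$ as a quotient of cofactors of $A(\delta)$, all lying in $\Z[\delta]$; normalising by the entry $v_p=1$ gives $v_j\in\Q(\delta)$ for every $j$, and therefore $\beta_i=v_{i-1}/v_i\in\Q(\delta)$ for all $i\in\Z_p$ (with the convention $v_0=\delta$).

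The main obstacle is to upgrade \emph{algebraic} to \emph{algebraic integer}. For a single $m$, relation~\eqref{eq:skorohotove} isolates the top power as $\delta^n=\sum_{i<n}\lambda_i\delta^i$ with coefficients $\lambda_i$ lying in the $\Z$-module $\Lambda=\Z+\Z v_1+\cdots+\Z v_{p-1}$. The trouble is that $\Lambda$ need not be a ring and the $v_j$ themselves need not be algebraic integers — indeed in Example~\ref{ex:1} the number $\beta_2$ is not — so integrality of $\delta$ is strictly finer than integrality of the $v_j$ and cannot be obtained by a naive module-finiteness argument. Closing this gap means using the monic relation together with the remaining relations $\vec{g}^{(m)}(\delta)\dotp\vec{v}=0$ to control the denominators and produce a genuinely monic polynomial over $\Z$ vanishing at $\delta$, exactly along the lines of the proof of Theorem~14 in~\cite{CCMP22}; this is the technical heart of the argument. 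It remains to justify the genericity invoked above, namely that $p$ of the vectors $\vec{g}^{(m)}$ can be chosen $\Q(X)$-independent and that $A(\delta)$ has rank exactly $p-1$, which follows from the freedom in the choice of $m$ (valid for all large $m$, since $\beta_p>1$) together with $\vec{v}\neq 0$.
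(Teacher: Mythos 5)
Your overall skeleton (produce several integer polynomial relations annihilating a fixed nonzero vector, take a determinant to get algebraicity, then use Cramer to get the ratios $v_{i-1}/v_i\in\Q(\delta)$) is sound, but the two steps you leave open are precisely the content of the proposition, and the way you set up the family makes the harder one unrecoverable. By taking all relations from the \emph{same} base and only varying $m$, every row vector $\vec{g}^{(m_s)}(X)$ has its unique monic degree-$n$ entry $g_p^{(m_s)}$ in the \emph{same} (last) column of $A(X)$. Consequently every term of $\det A$ uses exactly one monic entry and $p-1$ entries of degree $\leq n-1$ whose leading coefficients are arbitrary integers (recall $g_1$ carries the term $-\ceil{\delta^m\beta_1}X^{n-m-1}$), so $\det A$ has degree at most $n+(p-1)(n-1)$ and no reason whatsoever to be monic: your framework can only ever yield ``$\delta$ is algebraic.'' You acknowledge this and defer the upgrade to integrality to ``the lines of Theorem 14 in \cite{CCMP22}'' --- but that deferred argument \emph{is} the proof being asked for. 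The paper gets monicity structurally: by Corollary~\ref{c:vsechnyF}, Property (PF) passes to all rotated bases $\BB^{(\ell)}$, so Lemma~\ref{l:polynomy} applies to each of them; rewriting each relation against the single vector $\vec{v}^{(1)}$ via the $\delta$-twisted cyclic matrix $R$ places the monic polynomials $g_p^{(\ell)}$ at the cyclic positions $(1,p),(2,1),(3,2),\dots,(p,p-1)$ of the resulting matrix $M$. A short counting argument then shows the product $\delta\prod_{\ell=1}^p g_p^{(\ell)}(\delta)$ along this cycle is the \emph{unique} term of $\det M$ of top degree $pn+1$ (any other permutation is forced to pick an entry of degree $\leq n-1$), so $\det M$ is monic with integer coefficients. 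This cyclic placement is exactly what varying $m$ over one base cannot reproduce.

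The second gap is the pair of genericity claims you invoke: that $p$ of the vectors $\vec{g}^{(m)}(X)$ are $\Q(X)$-independent, and that $A(\delta)$ has rank exactly $p-1$. Neither follows from ``freedom in the choice of $m$'' together with $\vec{v}\neq\vec{0}$; a priori the whole family could span a space of dimension $<p$, or the evaluated rows could span a space of dimension $<p-1$, and you give no mechanism to exclude this. The paper needs the analogous rank statement too, and proves it: $0$ is a \emph{simple} eigenvalue of $M$ by Perron--Frobenius, which is exactly where the non-negativity of the coefficients of $g_j^{(\ell)}$ for $j\geq 2$ (a property deliberately built into Lemma~\ref{l:polynomy}) and the positivity $g_p^{(\ell)}(\delta)\geq\delta^n>0$ of the cyclic entries are used, making $M+cI_p$ non-negative and irreducible with positive eigenvector $\vec{v}^{(1)}$. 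Your matrix $A(\delta)$ has no comparable sign structure (all rows have the same shape, with the possibly-negative entries $g_1^{(m_s)}(\delta)$ stacked in one column), so no analogous irreducibility argument is available. In short: the proposal correctly identifies where the difficulties lie, but resolves neither; both are resolved in the paper by the rotation-of-the-base construction, which is the genuinely load-bearing idea.
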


\begin{proof} 
By Corollary \ref{c:vsechnyF},  $\BB^{(\ell)}$ satisfies Property (PF)   for every $\ell \in \Z_p$. Choose $n\in \N$ sufficiently large so that Lemma \ref{l:polynomy} can be applied for every $\ell \in \Z_p$ with the same value of $n$. By that, we obtain
polynomials $g^{(\ell)}_1, g^{(\ell)}_2, \ldots , g^{(\ell)}_{p-1}, g^{(\ell)}_{p}  \in \Z[X] $ with properties listed in the lemma; in particular, the vectors $\Big(g^{(\ell)}_1(\delta),g^{(\ell)}_2(\delta) , \ldots, g^{(\ell)}_p(\delta)\Big)$ and  $\vec{v}^{(\ell)} = \bigg(\prod\limits_{i=2}^p \beta_{i+\ell -1}, \, \prod\limits_{i=3}^p \beta_{i+\ell -1},\, 
\ldots,  \prod\limits_{i=p}^p \beta_{i+\ell -1},\, 1\bigg)^T $ 
satisfy
\begin{equation}\label{eq:rovnice}
    \Big(g^{(\ell)}_1(\delta), \ldots, g^{(\ell)}_p(\delta)\Big) \, \vec{v}^{(\ell)} = 0 \quad \text{for every } \ell \in\Z_p. 
\end{equation} 
Now consider $\ell\in\Z_p$ fixed. For $k\in\Z_p$, the $k$-th component of the vector $\vec{v}^{(\ell)}$ is equal to 
$$
{v}^{(\ell)}_k=\prod_{i=k+1}^{p}\beta_{i+\ell-1} = \prod_{j=k+\ell}^{p+\ell-1}\beta_j = 
    \begin{cases}
     \prod_{j=k+\ell-p}^{\ell-1}\beta_j& \text{ for }k+\ell\geq p+2;\\[2mm]
     \Big(\prod_{j=k+\ell}^{p}\beta_j\Big)\cdot \Big(\prod_{j=1}^{\ell-1}\beta_j\Big) & \text{ for }k+\ell\leq p+1.
    \end{cases}
$$
As a result, multiplying the vector $\vec{v}^{(\ell)}$ by the constant $\prod_{i=\ell}^p\beta_i$ leads to
$$
\Bigg(\prod_{i=\ell}^p\beta_i\Bigg) \ \vec{v}^{(\ell)} = 
\Big(\delta {v}^{(1)}_{\ell}, \dots, \delta {v}^{(1)}_p, 
{v}^{(1)}_1,\dots,{v}^{(1)}_{\ell-1}
\Big)^T= 
    \begin{pmatrix} 
    {\Theta} & \delta I_{p-\ell+1}\\
    I_{\ell-1} & \Theta
    \end{pmatrix}
\vec{v}^{(1)},
$$
where $I_j$ denotes the identity matrix of order $j$ and $\Theta$ stands for rectangular zero matrix of suitable size. 
We have thus derived that for $\ell\in\Z_p$ 
\begin{equation}\label{eq:rovnice1}
\Bigg(\prod_{i=\ell}^p \beta_i\Bigg)\  \vec{v}^{(\ell)} = R^{p-\ell+1}\  \vec{v}^{(1)}\quad \text{where} \quad 
R=
    \begin{pmatrix} 
    {\Theta} & \delta I_1 \\
    I_{p-1} & \Theta
    \end{pmatrix}.
\end{equation}
Combining~\eqref{eq:rovnice} with~\eqref{eq:rovnice1}, we obtain that for $\ell\in\Z_p$
$$  
\Big(g^{(\ell)}_1(\delta), \ldots, g^{(\ell)}_p(\delta)\Big)\ R^{p-\ell+1} \, \vec{v}^{(1)} = 0 .
$$
In other words, the vector $\vec{v}^{(1)}$  satisfies 
$M\vec{v}^{(1)} = \vec{0}$, where the matrix $M$ is a square matrix of~order~$p$ of~the~form
$$
M =\begin{pmatrix}
\delta g_{1}^{(1)}&\delta g_{2}^{(1)}&\delta g_{3}^{(1)}& \cdots & \delta g_{p-1}^{(1)}& \delta g_{p}^{(1)}\\
g_{p}^{(2)}&\delta g_{1}^{(2)}&\delta g_{2}^{(2)}& \cdots &\delta g_{p-2}^{(2)}& \delta g_{p-1}^{(2)}\\
 g_{p-1}^{(3)} &g_{p}^{(3)}&\delta g_{1}^{(3)}& \cdots &\delta g_{p-3}^{(3)}&\delta g_{p-2}^{(3)}\\

\vdots &\vdots&\vdots& \ddots  & \vdots& \vdots \\
 g_{3}^{(p-1)} &g_{4}^{(p-1)}& g_{5}^{(p-1)}& \cdots &\delta g_{1}^{(p-1)}&\delta g_{2}^{(p-1)}\\
  g_{2}^{(p)} &g_{3}^{(p)}&g_{4}^{(p)}& \cdots & g_{p}^{(p)}&\delta g_{1}^{(p)}\\
  \end{pmatrix}.
$$
Note that the  components of the matrix $M$ are polynomials in $\delta$ 
(for simplicity of notation, we have omitted the dependence on the variable). Their degree is given by Lemma~\ref{l:polynomy}. 
In particular, the element of $M$ with strictly highest degree is $M_{1,p}=\delta g_p^{(1)}$ of degree $n+1$.  

Equality $M\vec{v}^{(1)} = \vec{0}$ means that $\vec{v}^{(1)}$ is an eigenvector of the matrix $M$ corresponding to the eigenvalue~$0$. Since $\vec{v}^{(1)}\neq \vec{0}$, necessarily $\det M =0$. 
We now show that $\det M$ is a monic polynomial with integer coefficients in the variable $\delta$. Consequently, $\delta$ is an algebraic integer.
By Lemma~\ref{l:polynomy}, the product
$$
M_{1, p}M_{p, p-1}M_{p-1, p-2} \cdots M_{3,2} M_{2,1} = \delta \prod_{\ell=1}^p g_p^{(\ell)}(\delta)
$$ 
is a product of  monic polynomials, together of degree $pn+1$. 
All other products contributing to the~value of~$\det M$ are of degree~$\leq pn$. Hence, $\det M$ is a monic polynomial in $\delta$ of degree $pn+1$ with integer coefficients.  

It remains to show that  $\beta_i \in \Q(\delta)$ for every $i\in\Z_p$. Recall that  $\vec{v}^{(1)}$ is an eigenvector of the matrix $M$ to the eigenvalue~$0$. 
We now show that $0$ is a simple eigenvalue of $M$. For that, we use Perron-Frobenius theorem. 
Note that all non-diagonal components of $M$ are non-negative, since $\delta>1$ and the~coefficients of the polynomials $g^{(\ell)}_j$ for $j\geq2$ are non-negative. Thus for sufficiently large $c\in \N$, the matrix  $M+cI_p$ is non-negative. The matrix is also irreducible. This is readily seen, realizing that  entries 
$M_{1,p} = \delta g_p^{(1)}(\delta)$, $M_{2,1} =g_p^{(2)}(\delta)$, \ldots, $M_{p, p-1}=g_p^{(p)}(\delta) \geq \delta^n >0$ are positive. 
The vector $\vec{v}^{(1)}$ is a positive eigenvector of the matrix $M+cI_p$ to the eigenvalue $c$, i.e.\ 
$(M+cI_p)\vec{v}^{(1)}=c\vec{v}^{(1)}$. By Perron-Frobenius theorem, $c$ is a~simple eigenvalue of $M+cI_p$ and hence $0$ is a~simple eigenvalue of~$M$.

As components of the matrix $M$ belong to the field $\Q(\delta)$,
there exists a vector $\vec{u}$ with components in~$\Q(\delta)$ such that $M \vec{u} = \vec{0}$. We necessarily have $\vec{v}^{(1)} = \alpha\vec{u}$ for some $\alpha \in \R$. 
To complete the proof, realize that for every $i \in \{2,3, \ldots, p\}$ 
$$
\beta_{i} = \frac{v_{i-1}^{(1)}}{v_{i}^{(1)}} = \frac{u_{i-1}}{u_{i}}  \in~\Q(\delta),
$$
and $\beta_1 = \frac{\delta}{\beta_2\cdots\beta_p} \in \Q(\delta)$.
\end{proof}

\begin{proposition}
\label{prop:nec_F_5}
Let $\BB=({\beta_1,\beta_2,\dots,\beta_p})$ be an alternate base 
with Property (PF). Then $\delta=\prod_{i=1}^p \beta_i$ is either a Pisot or a Salem number.   
\end{proposition}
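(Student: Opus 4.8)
The plan is to show that no conjugate of $\delta$ lies outside the closed unit disc. By Proposition~\ref{prop:nec_F_4} we have $\beta_i\in\Q(\delta)$, so every element of $\Fin(\BB)$ lies in $\Q(\delta)$: if $x\in\Fin(\BB)$ with $d_{\BB}(x)=x_1\cdots x_L0^\omega$, then $x=\sum_{k=1}^L x_k/(\beta_1\cdots\beta_k)\in\Q(\delta)$. I would fix a conjugate $\gamma\neq\delta$ of $\delta$ with associated embedding $\psi\colon\Q(\delta)\to\C$, $\psi(\delta)=\gamma$, and write $b_i=\psi(\beta_i)$, so that $\prod_{i=1}^p b_i=\gamma$. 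Applying $\psi$ to the expression for $x$ gives $\psi(x)=\sum_{k=1}^L x_k\big/\prod_{i=1}^k b_i$. Since the digits satisfy $0\le x_k<\beta_k$, they are bounded by a constant $D$, and for $k=qp+r$ one has $\big|\prod_{i=1}^k b_i\big|=|\gamma|^q\big|\prod_{i=1}^r b_i\big|$. Hence, \emph{assuming} $|\gamma|>1$, the series converges geometrically and $|\psi(x)|\le K$ for a constant $K$ independent of $x$; thus the whole set $\psi(\Fin(\BB))$ is bounded.

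The first consequence I would extract is a spectral gap. Using Property (PF), every number $m/\delta^k$ with $m\in\N$ and $0\le m<\delta^k$ lies in $\Fin(\BB)$, being the $m$-fold sum of $1/\delta^k=1/(\beta_1\cdots\beta_{kp})\in\Fin(\BB)$ (all partial sums stay below $1$). Taking $m=\floor{\delta^k}$ and applying the bound gives $\big|\floor{\delta^k}/\gamma^k\big|\le K$, whence $\delta^k\le K|\gamma|^k+O(1)$ and therefore $|\gamma|\ge\delta$ whenever $|\gamma|>1$: no conjugate lies in the annulus $1<|\gamma|<\delta$.

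To rule out the remaining region $|\gamma|\ge\delta$ (with $\gamma\neq\delta$), I would produce elements of $\Fin(\BB)$ whose $\psi$-image grows like $\gamma^n$. The model is the Frougny--Solomyak argument: the fractional parts $\{\delta^n\}=\delta^n-\floor{\delta^n}\in[0,1)\cap\Z[1/\delta]$, once known to lie in $\Fin(\BB)$, satisfy $\psi(\{\delta^n\})=\gamma^n-\floor{\delta^n}$, because $\psi$ fixes the integer $\floor{\delta^n}$ and sends $\delta^n$ to $\gamma^n$. Combined with the uniform bound $|\psi(\{\delta^n\})|\le K$, this forces $|\gamma^n-\floor{\delta^n}|\le K$ for all $n$, which is impossible when $|\gamma|>1$: if $\gamma\notin\R$ then $\operatorname{Im}(\gamma^n)=|\gamma|^n\sin(n\arg\gamma)$ is unbounded along a subsequence while $\floor{\delta^n}\in\R$; if $\gamma<-1$ then $\gamma^n-\floor{\delta^n}\to-\infty$ along odd $n$; and if $\gamma>1$ with $\gamma\neq\delta$ then $\gamma^n-\floor{\delta^n}\to\pm\infty$ according to the sign of $\gamma-\delta$. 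In every case boundedness is contradicted, so $|\gamma|\le1$ and $\delta$ is Pisot or Salem.

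The main obstacle is exactly the finiteness input of the last paragraph. Every element of $\Fin(\BB)$ has $\psi$-image bounded by $K$, so an element with conjugate growing like $\gamma^n$ can exist only if it fails to be finite — which is the contradiction we seek — but to run the argument we must first place the right witnesses inside $\Fin(\BB)$. The numbers $\{\delta^n\}$ are $\Z[1/\delta]$-combinations with possibly \emph{negative} coefficients, so under Property (PF) (closure under addition only, not subtraction) their finiteness is not automatic; this is where I expect the real work. I would either (i) realize a suitable substitute directly by a construction in the spirit of Lemma~\ref{l:polynomy}, choosing an integer multiple of $1/(\delta^m\beta_p\beta_1)$ whose finite $\BB^{(p)}$-expansion encodes $\gamma^n$ up to a bounded term, or (ii) first establish that $\delta$ is dominant among its conjugates (a Perron number) and combine this with the gap $|\gamma|\ge\delta$ from the second paragraph. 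Either route closes the argument, and securing (i) or (ii) under Property (PF) alone is the crux.
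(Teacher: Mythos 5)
Your first two paragraphs are correct: assuming $\abs{\gamma}>1$, the geometric estimate $\abs{\psi(x)}\leq K$ for all $x\in\Fin(\BB)$ is valid, and the witnesses $\floor{\delta^k}/\delta^k$ (legitimately in $\Fin(\BB)$ under (PF), being iterated sums of $1/\delta^k$) do exclude conjugates in the annulus $1<\abs{\gamma}<\delta$. But the proposal has a genuine gap exactly where you flag it: the region $\abs{\gamma}\geq\delta$, $\gamma\neq\delta$, is never ruled out. Your route (ii) is circular in practice -- proving that $\delta$ is Perron under (PF) alone \emph{is} the problem of excluding $\abs{\gamma}\geq\delta$ -- and route (i) is a pointer to where an idea might come from, not an argument. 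The whole case analysis in your third paragraph (on $\gamma$ complex, negative, or real $>1$) is conditional on $\{\delta^n\}=\delta^n-\floor{\delta^n}$ lying in $\Fin(\BB)$, which, as you yourself observe, cannot be extracted from (PF) because it encodes a subtraction. So the decisive step is missing, not merely deferred.

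The idea you are missing (and how the paper closes this) is to work with the \emph{complement} of the fractional part rather than the fractional part itself: take $x=\ceil{\delta^m}/\delta^{m+1}$. This is a positive integer multiple of $1/\delta^{m+1}\in\Fin(\BB)$ with all partial sums below $1$, so (PF) alone -- pure addition -- puts $x\in\Fin(\BB)$. Moreover $\frac1\delta<x<\frac2\delta$, so the greedy algorithm forces $d_\BB(x)=0^{p-1}\,1\,0^{(m-1)p}\,x_{mp+1}\cdots x_{pN}\,0^\omega$; the tail beyond the leading digit $1$ is therefore a string of non-negative digits whose value is exactly $x-\frac1\delta=(\ceil{\delta^m}-\delta^m)/\delta^{m+1}$. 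Grouping the tail digits $p$ at a time into super-digits $d_j=\sum_{k=1}^p x_{jp+k}\prod_{i=k+1}^p\beta_i$ from a fixed finite alphabet $\D\subset\Q(\delta)$, one obtains the identity $\ceil{\delta^m}-\delta^m=\sum_{j=m+1}^{N}d_j\,\delta^{m-j}$, in which $\delta^m$ appears with integer coefficient but no subtraction of expansions was ever performed. Applying $\psi$ (which fixes the integer $\ceil{\delta^m}$) and subtracting the two identities gives $\delta^m-\gamma^m=\sum_{j}\psi(d_j)\gamma^{m-j}-\sum_{j}d_j\delta^{m-j}$, whose right-hand side is bounded by $M/(\abs{\gamma}-1)+1$ uniformly in $m$ whenever $\abs{\gamma}>1$. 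Since $\limsup_{m\to\infty}\abs{\delta^m-\gamma^m}=+\infty$ for every conjugate $\gamma\neq\delta$, this is a contradiction. Note that this kills \emph{all} conjugates with $\abs{\gamma}>1$ in one stroke, so your two-region split and the sign/argument case analysis become unnecessary; your route (i) was pointing in this direction (Lemma~\ref{l:polynomy} uses the same ceiling device), but as written the crux remains unproven.
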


The proof follows the same ideas as the proof of Theorem~1 in~\cite{CCK22}.

\begin{proof}
According to Proposition \ref{prop:nec_F_4} the number $\delta$ is an algebraic integer. 
If $\delta$ is a rational integer, the~proof is finished. Assume that $\delta \notin \N$.  
For any sufficiently large $m \in \N$,  one has
\begin{equation*}
    \delta^m<\ceil{\delta^m}<\delta^m \beta_p.
\end{equation*}
Since $\frac{1}{\delta^{m+1}} \in \Fin(\BB)$,  Property (PF) implies that $n \,\frac{1}{\delta^{m+1}} \in \Fin(\BB)$  for all $n \in \N, n < \delta^{m+1}$.  
In~particular, $x:= \ceil{\delta^m}\frac{1}{\delta^{m+1}}  \in \Fin(\BB) $ as well, i.e.\, $d_{{\BB}}(x) = x_1x_2x_3\cdots x_{pN}0^\omega$  for some $N \in \N$. 
Realize that $x$ satisfies 
$$
\frac{1}{\delta} < x=\frac{\ceil{\delta^m}}{\delta^{m+1}}<\frac1\delta + \frac{1}{\delta^m} < \frac{2}{\delta},
$$ and therefore the greedy algorithm yields $x_1 = \dots =x_{p-1} = 0$, $x_p = 1$ and $r_p = \frac{\ceil{\delta^m} - \delta^m}{\delta^m} \in \frac{1}{\delta^m}(0,1)$. Hence $x_{p+1}=x_{p+2} = \dots = x_{mp} = 0$. 
When evaluating the string $d_{{\BB}}(x)$, we may group summands in~the~following way
\begin{equation}\label{sgrupovane} 
x= \frac{\ceil{\delta^m}}{\delta^{m+1}}   = \frac{1}{\delta} + \sum_{k=mp+1}^{pN} \frac{x_k}{\prod_{i=1}^k{\beta_i}} = \frac{1}{\delta} + \sum_{j=m+1}^{N}\frac{1}{\delta^{j+1}}\underbrace{\Bigg(\sum_{k=1}^{p} x_{jp+k}\prod_{i=k+1}^p{\beta_i}\Bigg)}_{=: d_j}.
\end{equation}
Greedy algorithm for obtaining $d_{\BB}(x)$ implies that the  digit  $x_i$ is  bounded from  above by $\beta_i \leq \delta$, for each $i \in \N, i\geq 1$, and thus every coefficient $d_j$ belongs to the finite alphabet 
$$
\D=\Bigg\{a_1 \prod_{i=2}^p\beta_i + a_2 \prod_{i=3}^p\beta_i+\cdots + a_{p-1}\prod_{i=p}^p\beta_i + a_p \, : \, a_j \in \N, a_j\leq \floor{\delta}\Bigg\}.
$$
In this notation, equality~\eqref{sgrupovane} multiplied by $\delta^{m+1}$ can be rewritten as 
\begin{equation}\label{sgrupovane2}
\ceil{\delta^m} - \delta^m =\sum_{j=m+1}^{N} \frac{d_{j}}{\delta^{j-m}}. \end{equation}
In order to show that $\delta$ is a Pisot or a Salem number, denote $\gamma$ a conjugate of $\delta$, $\gamma \neq \delta$, and $\psi$ the~isomorphism between  $\Q(\delta)$ and $\Q(\gamma)$ induced by $\psi(\delta) = \gamma$.  
Since $\ceil{\delta^m} \in \N$, we have $\psi(\ceil{\delta^m} )=\ceil{\delta^m}$.
By~Proposition~\ref{prop:nec_F_4}, all $\beta_i$ belong to the field $\Q(\delta)$,
and thus the image of the isomorphism $\psi$ on the~elements of~the~set $\D$ is well defined. Application of $\psi$ on \eqref{sgrupovane2} yields
\begin{equation}
\label{eq_odhad}
\delta^m - \gamma^m = \psi\Bigl(\ceil{\delta^m} - \delta^m\Bigr) - \Bigl(\ceil{\delta^m} - \delta^m\Bigr) =   \sum_{j=m+1}^{N} \frac{\psi(d_j)}{\gamma^{j-m}}  -\sum_{j=m+1}^{N} \frac{d_j}{\delta^{j-m}}.
\end{equation}
Let us show that  the assumption $\abs{\gamma}>1$ leads to a contradiction. 

Denote $M=\max\{ |\psi(d)|\,:\, d \in \D\}$.  Obviously
\begin{equation}
\label{eq_odhad2}
\Biggl|\,\sum_{j=m+1}^{N} \frac{\psi(d_j)}{\gamma^{j-m}}\,\Biggr| \leq M\frac{1}{|\gamma| -1} \qquad 
\text{and}
\qquad \Biggl|\sum_{j=m+1}^{N} \frac{d_j}{\delta^{j-m}}\Biggr| = \Bigl|\ceil{\delta^m}-\delta^m\Bigr|<1.
\end{equation}
Since neither $M$ nor $\gamma$ depends on the choice of $m\in \N$, relations~\eqref{eq_odhad} and~\eqref{eq_odhad2} imply that the sequence ${\bigl(\delta^m - \gamma^m\bigr)_{m \in \N}}$ is bounded. 
This is in contradiction with the fact that $\limsup\limits_{m\to +\infty}|\delta^m - \gamma^m| =+\infty$. 
\end{proof}

Propositions~\ref{prop:nec_F_4} and~\ref{prop:nec_F_5} demonstrate Item 1 of Theorem~\ref{thm:necessary}. Let us proceed with the~proof of Item~2.

\begin{proposition}
\label{l_nec_F_1}
Let $\BB=({\beta_1,\beta_2,\dots,\beta_p})$ be an alternate base. If $\BB$ satisfies (F), then $\BB$ is a simple Parry alternate base.
\end{proposition}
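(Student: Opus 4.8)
The plan is to reduce the claim to the finiteness of a single expansion of $1$ and then manufacture that finiteness from the subtraction clause of Property (F). By Corollary~\ref{c:vsechnyF} every shifted base $\BB^{(\ell)}$ again satisfies Property (F), and since the product over one period equals the same number $\delta$ for every starting index, it suffices to prove that $d_\BB(1)=\t^{(1)}=t^{(1)}_1t^{(1)}_2\cdots$ is finite; applying the same statement to each $\BB^{(\ell)}$ then yields $\t^{(\ell)}\in\F$ for all $\ell\in\Z_p$, i.e.\ simple Parry. First I would isolate a convenient number just below $1$. Since $\beta_1>1$, the greedy algorithm gives $d_\BB(\beta_1^{-1})=10^\omega$, so $\beta_1^{-1}\in\Fin(\BB)$. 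A one-step computation shows that the greedy algorithm applied to $1-\beta_1^{-1}=(\beta_1-1)/\beta_1$ produces first digit $t^{(1)}_1-1$ and leaves exactly the remainder $\beta_1-t^{(1)}_1$ that the algorithm for $1$ produces after its first step; hence $d_\BB(1-\beta_1^{-1})=(t^{(1)}_1-1)\,t^{(1)}_2t^{(1)}_3\cdots$ has the same tail as $d_\BB(1)$. Consequently $d_\BB(1)\in\F$ if and only if $1-\beta_1^{-1}\in\Fin(\BB)$, and the whole proposition reduces to proving the latter.

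The core step is to show $1-\beta_1^{-1}\in\Fin(\BB)$ directly. By Propositions~\ref{prop:nec_F_4} and~\ref{prop:nec_F_5} the number $\delta$ is an algebraic integer, so its monic minimal polynomial yields an integer relation $1=\sum_{i=1}^{d}a_i\delta^{-i}$ with $a_i\in\Z$. I would split this relation into its positive and negative parts and absorb the term $\beta_1^{-1}$ into the negative side. To keep everything inside $[0,1)$, I multiply through by $\delta^{-L}$ for $L$ large: set $X_L=\sum_{a_i>0}a_i\delta^{-i-L}$ and $Y_L=\sum_{a_i<0}|a_i|\delta^{-i-L}+\beta_1^{-1}\delta^{-L}$. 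Each summand $\delta^{-j}$ lies in $\Fin(\BB)$ (its expansion is $0^{jp-1}1$), and so does $\beta_1^{-1}\delta^{-L}$ by the shift relation~\eqref{eq:posunrozvojealternate}; for $L$ large enough both $X_L$ and $Y_L$ are smaller than $1$, so assembling them by repeated addition and using Property (PF) places $X_L,Y_L\in\Fin(\BB)\cap[0,1)$. By construction $X_L-Y_L=(1-\beta_1^{-1})\delta^{-L}\in(0,1)$, so the subtraction clause of Property (F) gives $(1-\beta_1^{-1})\delta^{-L}\in\Fin(\BB)$, and then~\eqref{eq:posunrozvojealternate} removes the factor $\delta^{-L}$ to yield $1-\beta_1^{-1}\in\Fin(\BB)$, as required.

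The step I expect to be the main obstacle is precisely this realization of $1-\beta_1^{-1}$ as an honest difference of two finite expansions. The difficulty is that the information we want concerns the boundary point $1$, which never lies in $\Fin(\BB)$, so one cannot simply write $1-\beta_1^{-1}$ as ``$1$ minus a finite number''; the integer relation coming from $\delta$ being an algebraic integer is what lets us trade the boundary point for a genuine, if deeply shifted, difference of two admissible finite strings. It is also here that the \emph{subtraction} clause of Property (F) is used in an essential way: Property (PF) alone only assembles the nonnegative parts $X_L$ and $Y_L$, and the passage from them to $(1-\beta_1^{-1})\delta^{-L}$ is exactly the step (PF) cannot perform—consistent with simple Parry being claimed under (F) but not under (PF) alone. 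I note that this route is uniform in $\delta$ and so handles the Pisot and Salem cases together, without first having to establish eventual periodicity. Once $1-\beta_1^{-1}\in\Fin(\BB)$ is established, its expansion $(t^{(1)}_1-1)t^{(1)}_2t^{(1)}_3\cdots$ is finite, hence so is $d_\BB(1)$, and running the argument for every $\BB^{(\ell)}$ completes the proof that $\BB$ is a simple Parry alternate base.
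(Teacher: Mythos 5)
Your proof is correct, and there is no circularity in citing Proposition~\ref{prop:nec_F_4} (it only requires Property (PF) and is proven independently of this statement), but your route is genuinely different from, and much heavier than, the paper's. Both arguments rest on the same device: exhibit a number whose $\BB$-expansion carries the tail $t_2t_3\cdots$ of $d_{\BB}(1)$ as a difference of two elements of $\Fin(\BB)$, then invoke the subtraction clause of (F). They differ in how that difference is produced. The paper takes $x=\frac{1}{\delta}-\frac{t_1}{\delta\beta_1}=\frac{\beta_1-\floor{\beta_1}}{\delta\beta_1}$: both terms lie in $\Fin(\BB)$ for elementary reasons ($\frac{1}{\delta}$ has expansion $0^{p-1}10^\omega$, and $\frac{t_1}{\delta\beta_1}$ is obtained by adding $t_1$ copies of $\frac{1}{\delta\beta_1}$ via (PF)), the greedy algorithm gives $d_{\BB}(x)=0^{p+1}t_2t_3\cdots$, and one application of (F) finishes. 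You instead target $1-\beta_1^{-1}$, whose expansion $(t_1-1)t_2t_3\cdots$ carries the same tail, and manufacture it from the relation $1=\sum_i a_i\delta^{-i}$ supplied by the minimal polynomial of $\delta$; this forces you to import Proposition~\ref{prop:nec_F_4} (itself proven via Perron--Frobenius) where the paper needs nothing beyond the greedy algorithm and Corollary~\ref{c:vsechnyF}. Moreover, the obstruction you highlight --- that $1$ is a boundary point, so $1-\beta_1^{-1}$ cannot be written directly as a difference of finite expansions --- dissolves if you apply your own scaling idea one step earlier: $\frac{1}{\delta}\bigl(1-\beta_1^{-1}\bigr)=\frac{1}{\delta}-\frac{1}{\delta\beta_1}$ is already a difference of two numbers with expansions $0^{p-1}10^\omega$ and $0^{p}10^\omega$, so one application of (F) plus the shift relation~\eqref{eq:posunrozvojealternate} completes the proof with no algebraic number theory at all; this is essentially the paper's argument. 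In short, your proof is valid, but everything the minimal-polynomial machinery buys is available for free.
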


\begin{proof}
Denote $\delta = \prod _{i=1}^p\beta_i$ and $d_{\BB}(1) = t_1t_2t_3 \cdots$. It suffices to show that $d_{\BB}(1) \in \F$. The~fact that $d_{\BB^{(\ell)}}(1) \in \F$ for every $\ell\in\Z_p$ then follows from Corollary \ref{c:vsechnyF}.

Clearly, $\frac{1}{\delta}, \frac{1}{\delta \beta_1}\in \Fin(\BB)$, and  
$x:=\frac{1}{\delta} - \frac{t_1}{\delta \beta_1}=\frac{\beta_1-\floor{\beta_1}}{\delta \beta_1}\in [0,1)$.  
Property (F) implies that $x$ belongs to the set $\Fin(\BB)$ as well.  
Obviously, $x<\frac{1}{\delta\beta_1}$. Therefore, the greedy algorithm for the $\BB$-expansion $d_{\BB}(x)=x_1x_2x_3\cdots$ yields $x_1=x_2=\dots = x_{p+1}=0$ and
$$
r_{p+1} =  \delta\beta_1 x=\beta_1-{t_1}= \frac{t_2}{\beta_2} + \frac{t_3}{\beta_2\beta_3}  +\cdots\ . 
$$
Consequently, $0^{p+1}t_2t_3t_4\cdots$ is the $\BB$-expansion of $x$.
Since $x \in \Fin(\BB)$, the support of $d_{\BB}(x)=0^{p+1}t_{2}t_{3}t_{4}\cdots$ is finite, and thus also the support of $d_{\BB}(1)= t_1t_2t_3\cdots$ is finite.
\end{proof}

The following proposition completes the proof of Theorem~\ref{thm:necessary}.

\begin{proposition}
\label{prop:nec_F_6}
Let $\BB=({\beta_1,\beta_2,\dots,\beta_p})$
be an alternate base with Property (F) and let $\psi$ be a non-identical embedding of~$\Q(\delta)$ into $\C$. Then the~vector $(\psi(\beta_1),\dots,\psi(\beta_p))$ is not positive.
\end{proposition}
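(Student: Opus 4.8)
The plan is to assume for contradiction that the vector $(\psi(\beta_1),\dots,\psi(\beta_p))$ is positive, and to derive a contradiction by exploiting the convergent series that arise from finite expansions under the embedding $\psi$. The key idea, following the contrapositive structure already used in the proof of Proposition~\ref{prop:nec_F_5}, is that if all $\psi(\beta_i)>0$ then the images under $\psi$ of the partial products $\prod_{i=k+1}^p\beta_i$ are all positive, so applying $\psi$ to a finite $\BB$-expansion produces a \emph{sign-definite} (in fact positive) sum, and this rigidity conflicts with the finiteness property for \emph{subtraction}. Since we are given Property~(F), by Proposition~\ref{l_nec_F_1} the base is simple Parry, so each $\t^{(\ell)}=d_{\BB^{(\ell)}}(1)\in\F$; this will let us produce, for suitable targets $x-y\in[0,1)$, finite expansions whose coefficients we can control and to which $\psi$ can be legitimately applied (all digits lie in $\Q(\delta)$ by Proposition~\ref{prop:nec_F_4}).

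First I would fix notation mirroring~\eqref{eq:uprava}: for $\xx\in\F$ write $\val\xx=\sum_{j=1}^p v_j\,S_j(\xx)$, where $v_j=\prod_{i=j+1}^p\beta_i$ and $S_j$ is the appropriate $\delta$-power series in the digits at positions $\equiv j\ (\mathrm{mod}\ p)$, so that $\psi(\val\xx)$ is obtained by replacing $\delta\mapsto\gamma$ and $v_j\mapsto\psi(v_j)$. Next I would construct a convenient family of differences. Consider numbers of the form $x=\frac{1}{\delta^{m+1}}$ and subtract from them small finite expansions built out of the (finite) strings $\t^{(\ell)}$; the same trick as in Proposition~\ref{l_nec_F_1}, namely $\frac1\delta-\frac{t_1}{\delta\beta_1}=\frac{\beta_1-\lfloor\beta_1\rfloor}{\delta\beta_1}$, shows subtraction can expose the tail $t_2t_3\cdots$ of an expansion of $1$. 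The goal is to engineer a value $w=x-y\in[0,1)$ whose exact finite $\BB$-expansion is forced to have \emph{nonnegative} digits, while on the $\psi$ side the equation $\psi(w)$ together with the positivity $\psi(\beta_i)>0$ forces every term of the series $\sum_j\psi(v_j)S_j$ to be nonnegative, hence bounds $\psi(w)$ away from what the algebraic identity $\psi(w)=\sum(\text{rational integers and }\beta\text{-data})$ demands.

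The contradiction I am aiming for is a growth estimate: under the positivity assumption, $\psi$ maps the nonnegative finite digits to a sum that is \emph{squeezed} into a bounded interval determined by $\frac{1}{\min_i\psi(\beta_i)-1}$-type geometric bounds, exactly as in~\eqref{eq_odhad2}, whereas an algebraic quantity like $\psi(\lceil\delta^m\rceil-\delta^m)=\lceil\delta^m\rceil-\gamma^m$ (if $\abs{\gamma}$ is large) or a carefully chosen difference of two finite expansions grows without bound along a subsequence of $m$. In effect, positivity of all $\psi(\beta_i)$ would make $\psi$ act on $\Fin(\BB)$ as an order-respecting, contraction-like map, and that is incompatible with $\gamma\neq\delta$ being a genuine conjugate whose powers cannot stay commensurate with integers. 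The main obstacle, and the step I expect to require the most care, is the \emph{construction} of the right pair $x,y\in\Fin(\BB)$ so that (i) $x-y\in[0,1)$, (ii) Property~(F) guarantees $x-y\in\Fin(\BB)$ with nonnegative digits, and (iii) the positivity of $(\psi(\beta_1),\dots,\psi(\beta_p))$ collides with the exact algebraic value of $\psi(x-y)$; balancing the greedy-algorithm control on the digits of $x-y$ against the need for an unbounded discrepancy on the $\psi$ side is precisely where the simple-Parry hypothesis (giving finite, fully explicit $\t^{(\ell)}$) must be used, and where the argument departs from the purely additive Proposition~\ref{prop:nec_F_5}.
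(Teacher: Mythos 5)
Your proposal contains the germ of the correct idea --- under the contradiction hypothesis $\psi(\beta_i)>0$ for all $i$, the $\psi$-image of a finite expansion is a sum of nonnegative terms, and Proposition~\ref{l_nec_F_1} supplies finite expansions to feed into this observation --- but the proof is never actually completed. The entire contradiction is deferred to a construction of a pair $x,y$ that you explicitly acknowledge you have not carried out (``the step I expect to require the most care''). Worse, the mechanism you propose for that step cannot work as described: you want an unbounded discrepancy obtained from geometric-series estimates as in~\eqref{eq_odhad2}, but those estimates required $\abs{\gamma}>1$, whereas at this point Proposition~\ref{prop:nec_F_5} has already established that $\delta$ is Pisot or Salem, so every conjugate satisfies $\abs{\gamma}\leq 1$ and the sums $\sum_j \psi(d_j)\gamma^{-(j-m)}$ admit no such bound. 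Your parenthetical ``(if $\abs{\gamma}$ is large)'' flags exactly the hypothesis you do not have.

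What closes the argument is far simpler, and needs no subtraction engineering at all: Property (F) enters only through Proposition~\ref{l_nec_F_1}, which makes each $d_{\BB^{(i)}}(1)=t_1^{(i)}\cdots t_n^{(i)}0^\omega$ finite, so that
\begin{equation*}
1=\frac{t_1^{(i)}}{\beta_{i}}+\frac{t_2^{(i)}}{\beta_{i}\beta_{i+1}}+\dots+\frac{t^{(i)}_n}{\beta_{i}\cdots \beta_{i+n-1}}
\end{equation*}
holds for every $i\in\Z_p$. Multiplying by $\beta_i$, subtracting $t_1^{(i)}$ and applying $\psi$ (legitimate since $\beta_j\in\Q(\delta)$ by Proposition~\ref{prop:nec_F_4}), the positivity assumption makes the right-hand side nonnegative, giving $\psi(\beta_i)\geq t_1^{(i)}=\floor{\beta_i}\geq 1$ for every $i$. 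Taking the product over $i$ yields $\gamma=\psi(\delta)=\prod_{i=1}^p\psi(\beta_i)\geq 1$, which contradicts $\abs{\gamma}\leq 1$ (Pisot/Salem) together with $\gamma\neq 1$ (one may assume $\delta\notin\Q$, since otherwise there is no conjugate $\gamma\neq\delta$ to speak of). Your plan, by contrast, applies $\psi$ to expansions of auxiliary numbers like $\lceil\delta^m\rceil/\delta^{m+1}$ rather than to the expansions of $1$ themselves, which is why it never reaches a contradiction.
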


\begin{proof}  
By Proposition~\ref{l_nec_F_1}, all $d_{\BB^{(i)}}(1)$ have finite support, thus there exists $n \in \N$ such that we may write
\begin{equation*}
    d_{\BB^{(i)}}(1)=t_1^{(i)}t_2^{(i)}\dots t_n^{(i)}0^\omega
\end{equation*}
for every $i \in \{1,\dots,p\}$. Therefore we have the following relation
\begin{equation}\label{eq:jedna}
    1=\frac{t_1^{(i)}}{\beta_{i}}+\frac{t_2^{(i)}}{\beta_{i} \beta_{i+1}}+\dots+\frac{t^{(i)}_n}{\beta_{i}\cdots \beta_{i+n-1}}.
\end{equation}
for every $i$. We now show that the assumption of positivity of the vector  $(\psi(\beta_1),\dots,\psi(\beta_p))$ leads to a contradiction with 
the fact that, according to Proposition \ref{prop:nec_F_5}, the number $\delta$ is either a Pisot or a Salem number.
Suppose that $\psi(\beta_j)>0$ for all $j \in \{1,\dots,p\}$.
Multiplying~\eqref{eq:jedna} by $\beta_{i}$, subtracting $t_1^{(i)}$ and applying $\psi$ yields
\begin{equation*}
\psi(\beta_{i})-t_1^{(i)}=\frac{t^{(i)}_2}{\psi(\beta_{i+1})}+\frac{t_3^{(i)}}{\psi(\beta_{i+1})\psi(\beta_{i+2})}+\dots+\frac{t_n^{(i)}}{\psi(\beta_{i+1})\cdots \psi(\beta_{i+n-1})} \geq 0.
\end{equation*}
Therefore $\psi(\beta_i)\geq t_1^{(i)}$~for all indices $i \in \{1,\dots,p\}$. Consequently,
\begin{equation*}
    \psi(\delta)=\psi(\beta_1)\cdots\psi(\beta_p)\geq\prod_{i=1}^{p} t_1^{(i)}\geq 1.
\end{equation*}
Since $\delta\neq 1$, we have $\psi(\delta)>1$.
Realize that $\psi(\delta)$ is an algebraic conjugate of $\delta$, thus we derive that $\delta$ is neither a Pisot, nor a Salem number, which contradicts Proposition \ref{prop:nec_F_5}.
\end{proof}

\begin{remark}
In case that $p=1$, the above statement directly implies that $\delta=\beta$ is not a~Salem number, since all Salem numbers have a conjugate in $(0,1)$.
For $p\geq 2$, we were not able to exclude $\delta$ to be a~Salem number. 
\end{remark}

\section{Sufficient condition}\label{sec:sufficient}

In this section we prove a sufficient condition for an alternate base to satisfy Property (PF). For our purposes, Property (PF) can be reformulated in terms of strings in the set $\F$, as defined in~\eqref{eq:stringsF}.
For that, let us introduce the notation for digit-wise addition of strings.
Let $\xx = x_1x_2x_3 \cdots $ and $\yy = y_1y_2y_3 \cdots$ be two sequences with $x_n, y_n \in \mathbb{Z}$ for every $n \in \N, n \geq 1$. Then $\xx\oplus \yy$ stands for the sequence $z_1z_2z_3 \cdots $, where  $z_n =x_n+y_n $ for every $n \in \N$, $n\geq 1$.
Obviously, in this notation, we have
\begin{itemize}
\item
$\val  (\xx\oplus \yy) = \val  \xx + \val \yy$;  

\item if $\xx\prec_{lex} \tilde{\xx}$, then for every $\yy$ the inequality $\xx\oplus\yy \prec_{lex} \tilde{\xx}\oplus\yy$ holds true.
\end{itemize}

\begin{lemma}\label{lem:nezoporneKoef}
    A Cantor real base $\BB$ satisfies Property (PF) if and only if for any string $\zz\in\F$ such that $\val(\zz)<\frac1\delta$ we have $\val(\zz)\in\Fin(\BB)$, i.e.\ $d_{\BB}(\val\zz)\in\F$.
\end{lemma}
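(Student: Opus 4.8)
The plan is to prove the two implications separately, with the forward direction carrying the real content.

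For the easy direction ($\Leftarrow$), I would assume the stated condition on strings and verify Property (PF) directly. Given $x,y\in\Fin(\BB)$ with $x+y<1$, take their finite greedy expansions $\xx=d_{\BB}(x)$ and $\yy=d_{\BB}(y)$, both in $\F$, and form the digit-wise sum $\zz=\xx+\yy$. Since the digits of $\xx,\yy$ are non-negative integers with finite support, the same holds for $\zz$, so $\zz\in\F$; and by linearity of $\val$ we have $\val(\zz)=\val(\xx)+\val(\yy)=x+y<1$. The hypothesis then yields $\val(\zz)=x+y\in\Fin(\BB)$, which is precisely condition~\eqref{eq:PF}.

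For the forward direction ($\Rightarrow$), I would assume Property (PF) and express $\val(\zz)$ as a sum of finitely many elementary values, each already known to lie in $\Fin(\BB)$, then add them one at a time. The elementary building block is the single-digit value $1/(\beta_1\cdots\beta_n)$. I would first check, by running the greedy algorithm, that its $\BB$-expansion is exactly $0^{n-1}10^\omega$: for $k<n$ the remainders are $r_k=1/(\beta_{k+1}\cdots\beta_n)<1$, forcing zero digits, while at step $n$ one gets $\beta_n r_{n-1}=1$, hence digit $1$ and remainder $0$. Thus $1/(\beta_1\cdots\beta_n)\in\Fin(\BB)$. Writing $\val(\zz)=\sum_{n}z_n\cdot\frac{1}{\beta_1\cdots\beta_n}$ and regarding the $n$-th term as $z_n$ copies of this elementary value, I list all $\sum_n z_n$ copies (finitely many, as $\zz\in\F$) in any order as $u_1,\dots,u_N$ and set $s_0=0$, $s_k=s_{k-1}+u_k$. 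Because the $u_k$ are non-negative and $s_N=\val(\zz)<1$, every partial sum satisfies $s_k\le\val(\zz)<1$, so $s_k\in[0,1)$. Starting from $s_0=0\in\Fin(\BB)$ and applying Property (PF) at each step to $s_{k-1}\in\Fin(\BB)$ and $u_k\in\Fin(\BB)$, whose sum $s_k$ lies in $[0,1)$, induction gives $s_N=\val(\zz)\in\Fin(\BB)$.

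The only mildly delicate points, which I would spell out explicitly, are the base-case claim that $0^{n-1}10^\omega$ is genuinely the greedy expansion of $1/(\beta_1\cdots\beta_n)$ (so that each elementary value really belongs to $\Fin(\BB)$), and the monotonicity observation that all partial sums remain in $[0,1)$, which is exactly what makes Property (PF) applicable at every step. Everything else is routine bookkeeping on finite supports. I expect the slight decomposition of each digit $z_n$ into $z_n$ separate unit contributions to be the key manoeuvre: it avoids having to argue about admissibility of the (possibly non-greedy) string $\zz$ itself and reduces the whole statement to repeated application of (PF) to elementary, manifestly finite summands.
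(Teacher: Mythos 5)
Your proof is correct. Note that the paper itself states Lemma~\ref{lem:nezoporneKoef} without any proof, treating it as an evident reformulation of Property (PF), so there is no argument in the paper to compare against; your write-up simply supplies the missing details. Both halves check out: the ($\Leftarrow$) direction via digit-wise addition of the two greedy expansions is the intended routine observation, and in the ($\Rightarrow$) direction your decomposition of $\val(\zz)=\sum_n z_n/(\beta_1\cdots\beta_n)$ into $\sum_n z_n$ unit summands $1/(\beta_1\cdots\beta_n)$, each verified to have greedy expansion $0^{n-1}10^\omega$, is exactly the right manoeuvre --- it sidesteps the pitfall that $z_n/(\beta_1\cdots\beta_n)$ alone need not visibly lie in $\Fin(\BB)$ when $z_n\geq\beta_n$, and it keeps every partial sum non-negative and below $\val(\zz)<1$ so that (PF) is applicable at each of the finitely many steps.
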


\begin{proof}
Let us show the implication $\Leftarrow$.
Let $x,y\in{\rm Fin}(\BB)$, such that $x+y\in[0,1)$.
We need to show that $x+y\in{\rm Fin}(\BB)$.
Denote $d_{\BB}(x)=\xx=x_1x_2x_3\cdots$, $d_{\BB}(y)=\yy=y_1y_2y_3\cdots$ and $\zz=\xx\oplus\yy$. We clearly have  $\val(\zz)=x+y$.   
Since $x,y\in{\rm Fin}(\BB)$, both strings $\xx,\yy$ belong to $\F$, and thus also $\zz\in\F$. 
Suppose first that $\val(\zz)=x+y<\frac1\delta$. Then directly from the assumption we have $x+y\in\Fin(\BB)$.
Suppose now that $\frac{1}{\delta}\leq \val\zz < 1$. 
Obviously, the string $\zz' := 0^p\zz$ belongs to $\F$ and  $\val\zz' = \frac{1}{\delta}\,\val\zz < \frac{1}{\delta}$. Thus the $\BB$-expansion of $\val\zz'$ has finite support and must be of the form $0^pz_{p+1}z_{p+2}\cdots\in\F$. Using~\eqref{eq:posunrozvojealternate},
we have that $d_{\BB}(\val\zz)=z_{p+1}z_{p+2}\cdots\in\F$, i.e. $\val\zz=x+y\in{\rm Fin}(\BB)$.
    
 In order to prove $\Rightarrow$, note that Property (PF) implies that the set ${\rm Fin}(\BB)$ is closed under summation of any finite number of elements, provided the result still belongs to the interval $[0,1)$.
Let $\zz\in\F$ such that $\val(\zz)<\frac1\delta$. Since the digits of $\zz$ are non-negative, the string $\zz$ can be written as a digit-wise sum of finitely many $\BB$-admissible strings from $\F$, say 
    $$
    \zz=\zz^{(1)}\oplus \cdots \oplus\zz^{(k)}.
    $$
    As the strings $\zz^{(j)}$ are $\BB$-admissible and have finite support, we have $\val(\zz^{(j)})\in{\rm Fin}(\BB)$. By Property (PF), $\val(\zz)=\sum_{j=1}^k\val(\zz^{(j)})\in{\rm Fin}(\BB)$.
\end{proof}

In other words, Lemma~\ref{lem:nezoporneKoef} says that for each string $\zz\in\F$ with $\val(\zz)<1$, we can find a~string ${\xx}\in\F$ admissible in base $\BB$ such that $\val\xx=\val\zz$.  In the sequel, we will  repeatedly use the fact that admissible strings are lexicographically the greatest among all strings of non-negative digits representing the same value.

The following definition is inspired by~\cite{AFMP03} where similar question is studied in the simple case of $p=1$.

\begin{definition}\label{def:rewriting}
Let $\BB=({\beta_1,\beta_2,\dots,\beta_p})$ be an  alternate base and denote the corresponding $\BB^{(\ell)}$-expansions of 1 by $\t^{(\ell)}$, as in~\eqref{eq:t}. Denote the following set of strings with finite support
\begin{equation}\label{eq:S}
\S=\left\{0^{p+\ell-1}t_1^{(\ell)}t_2^{(\ell)}\cdots t_{k-1}^{(\ell)}(t_k^{(\ell)}+1)0^{\omega} \ : \  \ell\in\Z_p, k\geq 1\right\} \cup \left\{0^{p+\ell-1} \t^{(\ell)}\in \F\ : \ \ell\in\Z_p \right\}. 
\end{equation}
We say that the alternate base $\BB$ has the \textit{rewriting property}, if for any string $\aa\in\S$ there exists a~string $T(\aa)$, such that 
\begin{equation}\label{eq:transkripce}
T(\aa)\in\F, \quad \val T(\aa)=\val\aa,\quad \text{and}\quad T(\aa)\succ_{lex} \aa.  
\end{equation}
\end{definition}

The first part of the set $\S$ may contain infinitely many strings. We call them strings of Type 1. The strings in the second part of the set $\S$ are said to be of Type 2. Note that if $d_{\BB^{(i)}}\notin\F$ for every $i$, then there are no strings of Type 2. 

The following lemma shows that given an alternate base $\BB$ with the rewriting property, a non-admissible $\zz \in \F$ can always be replaced by a lexicographically greater string  in $\F$ of the same value.
%

\begin{lemma}\label{l:LEXzvetseni} 
Let $\BB$ be an alternate base with the rewriting property. Then for every ${\zz \in \F}$ non-admissible in~$\BB$ with value  $\val \zz < \frac{1}{\delta}$, there 
exist
$\xx, \yy \in \F$ and  $j \in \N$ such that $\zz = \bigl(0^{pj}\xx \bigr)\oplus \yy$ and $\xx\in\S$.
Consequently, $\bigl(0^{pj}T(\xx)\bigr)\oplus \yy \in \F$ is a $\BB$-representation of $\val\zz$ lexicographically strictly greater than $\zz$. 
\end{lemma}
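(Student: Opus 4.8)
The plan is to locate a position where $\zz$ fails admissibility, to use the hypothesis $\val\zz<\frac{1}{\delta}$ to force that position far to the right, and then to subtract off, entry by entry, a suitably shifted element of $\S$ lying underneath $\zz$. Since $\zz$ is non-admissible, the criterion of Proposition~\ref{p:admisibilitaF} must fail at some index: there is $n_0\geq1$ with $\t^{(n_0)}\preceq_{lex}z_{n_0}z_{n_0+1}z_{n_0+2}\cdots$, where $\t^{(n_0)}=\t^{(\ell)}$ and $\ell\in\Z_p$ is the residue of $n_0$. Because $t_1^{(\ell)}=\floor{\beta_\ell}\geq1$, this forces $z_{n_0}\geq t_1^{(\ell)}\geq1$, hence $\val\zz\geq z_{n_0}/(\beta_1\cdots\beta_{n_0})\geq 1/(\beta_1\cdots\beta_{n_0})$. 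If $n_0\leq p$ held, the remaining factors $\beta_{n_0+1},\dots,\beta_p>1$ would give $\val\zz\geq1/\delta$, against the hypothesis; therefore $n_0>p$.

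Next I choose the element $\xx\in\S$ of residue $\ell$ that fits under $\zz$ at position $n_0$, together with a shift $0^{pj}$, $j\geq0$ (non-negativity guaranteed by $n_0>p$), so that the nonzero block of $0^{pj}\xx$ begins exactly at $n_0$. If $\t^{(\ell)}\preceq_{lex}z_{n_0}z_{n_0+1}\cdots$ is strict, let $k\geq1$ be the first offset where the two strings differ; then $z_{n_0+i-1}=t_i^{(\ell)}$ for $i<k$ and $z_{n_0+k-1}\geq t_k^{(\ell)}+1$, so I take the first-family element $\xx=0^{p+\ell-1}t_1^{(\ell)}\cdots t_{k-1}^{(\ell)}(t_k^{(\ell)}+1)0^\omega$, for which $0^{pj}\xx$ lies entrywise below $\zz$. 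If equality holds, then $z_{n_0}z_{n_0+1}\cdots=\t^{(\ell)}$; this tail is finite because $\zz\in\F$, so $\t^{(\ell)}\in\F$ and I take the corresponding second-family element, whose $\t^{(\ell)}$-block, once aligned to begin at $n_0$, coincides entrywise with $z_{n_0}z_{n_0+1}\cdots$. In both cases define $\yy$ entrywise by $y_n=z_n-(0^{pj}\xx)_n$; entrywise domination makes every $y_n$ a non-negative integer and $\supp\yy$ is finite, so $\yy\in\F$ and $\zz=(0^{pj}\xx)\oplus\yy$ with $\xx\in\S$.

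The displayed consequence is now immediate. By the rewriting property (Definition~\ref{def:rewriting}), $T(\xx)\in\F$, $\val T(\xx)=\val\xx$ and $T(\xx)\succ_{lex}\xx$. Prepending $0^{pj}$ scales values by $\delta^{-j}$: from the periodicity underlying~\eqref{eq:posunrozvojealternate} one has $\val(0^{pj}T(\xx))=\delta^{-j}\val T(\xx)=\delta^{-j}\val\xx=\val(0^{pj}\xx)$, and it preserves strict lexicographic order. Combining this with the two recorded properties of $\oplus$ yields $\val\bigl((0^{pj}T(\xx))\oplus\yy\bigr)=\val\bigl((0^{pj}\xx)\oplus\yy\bigr)=\val\zz$ and $(0^{pj}T(\xx))\oplus\yy\succ_{lex}(0^{pj}\xx)\oplus\yy=\zz$. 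Since $T(\xx)$ and $\yy$ have non-negative entries and finite support, $(0^{pj}T(\xx))\oplus\yy\in\F$, as claimed.

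The only delicate point is the bookkeeping modulo $p$: matching the residue $\ell$ of $n_0$ with the fixed zero-padding built into the two families of $\S$ and with the global shift $0^{pj}$, and, in the equality case, making sure the $\t^{(\ell)}$-block of the chosen element lands digit-for-digit on $z_{n_0}z_{n_0+1}\cdots$ so that the remainder $\yy$ stays non-negative. All the analytic content reduces to the single estimate $\val\zz\geq1/(\beta_1\cdots\beta_{n_0})$, which is exactly what turns the hypothesis $\val\zz<\frac{1}{\delta}$ into $n_0>p$ and hence into the existence of a valid non-negative shift.
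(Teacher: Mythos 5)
Your proof is correct and takes essentially the same route as the paper's: find a failure index via Proposition~\ref{p:admisibilitaF}, use $\val\zz<\frac{1}{\delta}$ to force that index beyond position $p$, split into the strict-inequality and equality cases to select a first-family or second-family element of $\S$ aligned at that index, define $\yy$ as the digit-wise difference, and conclude from the rewriting property and the two properties of $\oplus$. The only cosmetic difference is that you derive $n_0>p$ from the lower bound $\val\zz\geq 1/(\beta_1\cdots\beta_{n_0})$, while the paper first observes $z_1=\cdots=z_p=0$ and that admissibility cannot fail at a zero digit; the two micro-arguments are equivalent.
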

\begin{proof}
Consider $\zz= z_1z_2 \cdots \in \F$ non-admissible in $\BB$. 
Since $\val \zz < \frac{1}{\delta}$, necessarily $z_1 =z_2= \dots = z_{p}=0$. 
By  Proposition~\ref{p:admisibilitaF}, there exists $i \in \N, i\geq p+1 $, such that $z_{i}z_{i+1}\cdots  \succeq_{lex} d_{\BB^{(i)}}(1)$. 

We distinguish two cases how to determine the string $\xx$:  
\begin{itemize}
\item[a)] Suppose that $z_{i}z_{i+1}\cdots = d_{\BB^{(i)}}(1)$. 

As $\zz\in\F$, we have that $d_{\BB^{(i)}}(1)\in\F$. We can take $\xx \in \S$ and $j\in\N$ such that $0^{pj}\xx =0^{i-1}d_{\BB^{(i)}}(1)$. 

\item[b)] Suppose that $z_{i}z_{i+1}\cdots  \succ_{lex} d_{\BB^{(i)}}(1)$. 

Choose minimal  $n\in \N$ such that 
  $z_{i}z_{i+1}\cdots z_{i+n} \succ_{lex} d_{\BB^{(i)}}(1)$. Then necessarily $z_{i}z_{i+1}\cdots z_{i+n-1} =  t_1^{(i)}t_2^{(i)}\cdots t_{n}^{(i)}$  and $z_{i+n} \geq   t_{n+1}^{(i)}+1$. In this case we can define $\xx \in \S$ and $j\in\N$ so that  $0^{pj}\xx = 0^{i-1} t_1^{(i)}t_2^{(i)}\cdots t_n^{(i)}\left(t_{n+1}^{(i)}+1\right)0^\omega$. 
\end{itemize}

It is obvious that the choice of $\xx$ allows to find the string $\yy$ with non-negative digits such that $(0^{jp}\xx)\oplus \yy = \zz$. 
Now replacing $0^{jp}\xx$ by $0^{jp}T(\xx)$ yields
 $\bigl(0^{pj}T(\xx)\bigr)\oplus \yy \succ_{lex} (0^{jp}\xx)\oplus\yy =\zz$.
\end{proof}

\begin{remark}\label{pozn:mene}
    From the proof of Lemma~\ref{l:LEXzvetseni}, it follows that having two strings $\aa=a_1a_2a_3\cdots$, $\aa'=a'_1a'_2a'_3\cdots\in{\S}$ such that for each $j\geq 1$ we have $a_j\leq a'_j$, it suffices to find the rewriting $T(\aa)$, the string $T(\aa')$ is then found easily.
    In practise, this substantially reduces the number of rewriting rules we need to find for the rewriting property of an alternate base.
For example if $d_{\BB^{(\ell)}}\in\F$ for every $\ell$, then one can handle only finitely many strings of Type 1. 
In particular, suppose $d_{\BB^{(\ell)}}(1) \in \F$, say  $d_{\BB^{(\ell)}}(1) = t^{(\ell)}_1t^{(\ell)}_1 \cdots t^{(\ell)}_n 0^\omega$, then for $\ell,i,k$ such that
$pk+i \geq n$ we have
$$
0^{p+\ell-1}t_1^{(\ell)}t_2^{(\ell)}\cdots t_{k-1}^{(\ell)}(t_k^{(\ell)}+1)0^{\omega} = 
0^{p+\ell-1} \t^{(\ell)} \oplus 0^{p(k+1)+\ell +i-2}10^\omega.
$$
\end{remark}

\begin{example}\label{Ex:prikladSimple}
Let us present rewriting rules 
for the base as in Example~\ref{ex:1}. 
Since 
 $d_{\BB}(1)=2010^\omega$   and  $d_{\BB^{(2)}}(1)=110^\omega$,  by Remark~\ref{pozn:mene}, it is sufficient to find the rewriting rules for a smaller number of strings, as shown in the table below. 
$$
\begin{array}{|l|l|}
\hline
\aa_1=0030^\omega & T(\aa_1)=01010^\omega\\\hline
\aa_2=00210^\omega & T(\aa_2)=01001010^\omega\\\hline
\aa_3=002010^\omega & T(\aa_3)=010^\omega\\\hline
\aa_4=00020^\omega & T(\aa_4)=00101010^\omega\\\hline
\aa_5=000110^\omega & T(\aa_5)=0010^\omega \\\hline
\end{array}
$$
Note that strings $\aa_1$, $\aa_2$, $\aa_4$ are of Type 1, whereas $\aa_3$ and $\aa_5$ are of Type 2. 
Obviously, for every $j$, the string $T(\aa_j)$ is lexicographically larger than $\aa_j$. Moreover, one can check by direct calculation that $\val{\aa_j}=\val{T(\aa_j)}$, and thus the base $\BB$ satisfies the rewriting property.

Let us  demonstrate algorithm for addition using rewriting rules. Consider for example numbers $ x, y \in [0,1)$ with  
    $d_{\BB}(x) = 000020010^\omega$  and $d_{\BB}(y) =000020^\omega$. 
    Denote $z=x+y$.  The digit-wise sum of~the~two $\BB$-expansions gives a non-admissible representation of $z$, namely $d_{\BB}(x) \oplus d_{\BB}(y) ={ 000040010^\omega}$. Using rewriting rules for strings in the set $\S$, as shown in the above table, we can write
$$
\begin{aligned}
\underbrace{ 000040010^\omega}_{non-admissible} = &\ (000010010^\omega)\oplus (\underbrace{000030^\omega}_{00\aa_1})\\
&\hspace*{3cm}\downarrow\\
&\ (000010010^\omega)\oplus (\underbrace{0001010^\omega}_{00T(\aa_1)})  =\underbrace{000111010^\omega}_{non-admissible}.
\end{aligned}
$$  
In this way we have obtained another representation of the same number $z$. Since the latter is still non-admissible, we continue by rewriting 
$$
\begin{aligned}
\underbrace{000111010^\omega}_{non-admissible} = &\ 
(000001010^\omega) \oplus (\underbrace{000110^\omega}_{\aa_5})\\
&\hspace*{3.15cm}\downarrow\\
&\ (000001010^\omega) \oplus (\underbrace{0010^\omega}_{T(\aa_5)}) 
= \underbrace{001001010^\omega}_{admissible}.
\end{aligned}
$$
The resulting digit string is admissible, and therefore it is the $\BB$-expansion of $z$, i.e. $d_{\BB}(z)=d_{\BB}(x+y) = 001001010^\omega$. 
%
\end{example}

In the above example, we have used the rewriting rules to obtain the $\BB$-expansion of $x+y$.
In general, rewriting the non-admissible string $\zz\in\F$ we need to ensure that after finitely many steps, the~procedure yields the lexicographically maximal string representing the same value as $\zz$, i.e.\ the $\BB$-expansion of $\val\zz$. This may not be always the case, as is shown already for $p=1$ in~\cite{AFMP03}.

\begin{example}
    Consider the alternate base $\BB=(\beta)$, where $\beta$ is the dominant root of the~polynomial $x^6-x^5-1$. The greedy representation of 1 is $d_{\BB}(1)=100001$. By Remark~\ref{pozn:mene}, it suffices to find the following rewriting rules.
$$
\begin{array}{|l|l|}
\hline
\aa_1=020^\omega & T(\aa_1)=1000000111110^\omega\\\hline
\aa_2=0110^\omega & T(\aa_2)=1000000011110^\omega\\\hline
\aa_3=01010^\omega & T(\aa_3)=1000000001110^\omega\\\hline
\aa_4=010010^\omega & T(\aa_4)=1000000000110^\omega\\\hline
\aa_5=0100010^\omega & T(\aa_5)=1000000000010^\omega \\\hline
\aa_6=01000010^\omega & T(\aa_6)=10^\omega \\\hline
\end{array}
$$
Again, direct calculation confirms that the base $\BB$ satisfies the rewriting property. Nevertheless, it is not sufficient to have these rewriting rules to have Property (PF).
 In fact, the base $\BB$ cannot satisfy (PF), since it does not satisfy the necessary conditions of Theorem~\ref{thm:necessary}. In particular, the number $\delta=\beta$ is neither a Pisot, nor a Salem number. For,
there exists an algebraic conjugate $\gamma$ of $\beta$, $\gamma\neq\beta$, of modulus strictly larger than~1. 
\end{example}

The following notion will be used to ensure that the procedure of rewriting a non-admissible string terminates always in finitely many steps.

\begin{definition}\label{d:weight}
We say that an alternate base $\BB$ has the \textit{weight property}, if it has the~rewriting property, and, moreover, there 
exist positive integers $w_n$, $n\geq 1$, such that the weight function $g:\F\to \N$, $g(\aa)=\sum_{n\geq 1}w_n a_n$, satisfies 
\begin{enumerate}
\item[\itshape 1.] $g(0^p\aa)=g(\aa)$ for any $\aa\in\F$;
\item[\itshape 2.] $g(\aa)\geq g(T(\aa))$ for any $\aa\in\S$.
\end{enumerate}
\end{definition}

The weight property is sufficient to guarantee the positive finiteness property. We will use the obvious fact that the weight function satisfies $g(\xx\oplus\yy)=g(\xx)+g(\yy)$ for any two digit strings $\xx,\yy\in\F$. 

\begin{theorem}\label{thm:sufficient}
 Let $\BB=({\beta_1, \beta_2,\dots,\beta_p})$ be an alternate base satisfying the weight property. Then $\BB$ has Property (PF).
 \end{theorem}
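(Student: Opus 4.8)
The plan is to reformulate Property~(PF) via Lemma~\ref{lem:nezoporneKoef} and then to \emph{iterate} the single rewriting step of Lemma~\ref{l:LEXzvetseni} until an admissible string appears, using the weight function of Definition~\ref{d:weight} as a termination certificate. First I would reduce to small values: by Lemma~\ref{lem:nezoporneKoef} it suffices to prove that every $\zz\in\F$ with $\val\zz<1$ satisfies $\val\zz\in\Fin(\BB)$. Given such a $\zz$, the string $0^p\zz$ still lies in $\F$ and, by periodicity of $\BB$, has value $\val\zz/\delta<1/\delta$; moreover, by~\eqref{eq:posunrozvojealternate}, $d_\BB(\val\zz/\delta)=0^p\,d_\BB(\val\zz)$, so that $d_\BB(\val\zz/\delta)\in\F$ implies $d_\BB(\val\zz)\in\F$. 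Hence it is enough to fix an arbitrary $\zz\in\F$ with $v:=\val\zz<1/\delta$ and show $d_\BB(v)\in\F$.

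Next I would run the rewriting. Set $\zz^{(0)}=\zz$; as long as $\zz^{(m)}$ is non-admissible, its value $v<1/\delta$ lets me apply Lemma~\ref{l:LEXzvetseni} to obtain $\zz^{(m+1)}\in\F$ with $\val\zz^{(m+1)}=v$ and $\zz^{(m+1)}\succ_{lex}\zz^{(m)}$. Writing $\zz^{(m)}=(0^{pj}\xx)\oplus\yy$ with $\xx\in\S$ as in that lemma and using $g(\xx\oplus\yy)=g(\xx)+g(\yy)$ together with $g(0^p\aa)=g(\aa)$, the weight of the new string equals $g(T(\xx))+g(\yy)$, which by the inequality $g(\aa)\ge g(T(\aa))$ is at most $g(\xx)+g(\yy)=g(\zz^{(m)})$. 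Thus $\bigl(g(\zz^{(m)})\bigr)_m$ is a non-increasing sequence of non-negative integers bounded by $W:=g(\zz)$, and since every $w_n\ge1$ each digit obeys $z^{(m)}_n\le g(\zz^{(m)})\le W$.

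The hard part will be to show that this iteration stops, since the weight is only weakly decreasing while the order strictly increases; I would argue by contradiction, assuming an infinite chain $\zz^{(0)}\prec_{lex}\zz^{(1)}\prec_{lex}\cdots$. As the digits are uniformly bounded by $W$, working position by position shows that each coordinate $z^{(m)}_n$ is eventually constant in $m$, so the $\zz^{(m)}$ converge coordinate-wise to a string $\cc$; a first-difference comparison gives $\cc\succeq_{lex}\zz^{(m)}$, and since the chain is strictly increasing, in fact $\cc\succ_{lex}\zz^{(m)}$ for every $m$. Letting the digits stabilize and summing against the weights yields $g(\cc)\le W$, so $\cc$ has finite support, say $\cc=c_1\cdots c_L0^\omega$. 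But for all large $m$ the word $\zz^{(m)}$ agrees with $\cc$ on positions $1,\dots,L$ while $\zz^{(m)}\prec_{lex}\cc$, forcing the first disagreement at some $k>L$ with $0=c_k>z^{(m)}_k\ge0$ — impossible. This contradiction shows the iteration terminates after finitely many steps.

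Finally, the terminal string is admissible (otherwise Lemma~\ref{l:LEXzvetseni} would apply once more) and lies in $\F$ with value $v$, hence equals $d_\BB(v)$; thus $d_\BB(v)\in\F$, i.e.\ $v\in\Fin(\BB)$. Combined with the reduction of the first paragraph and Lemma~\ref{lem:nezoporneKoef}, this yields Property~(PF).
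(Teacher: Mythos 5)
Your proof is correct and follows essentially the same route as the paper: reduce via Lemma~\ref{lem:nezoporneKoef} (with the shift trick~\eqref{eq:posunrozvojealternate} handling values in $[\frac1\delta,1)$, which the paper does at the end rather than up front), iterate Lemma~\ref{l:LEXzvetseni} using the weight as a non-increasing integer invariant, and rule out an infinite lexicographically increasing chain. Your termination argument is an inlined, slightly rearranged version of the paper's Lemma~\ref{l:topo}: coordinate-wise monotone stabilization replaces the compactness argument, and ``bounded weight forces finite support of the limit, contradicting strict increase'' replaces ``infinite support of the limit contradicts bounded support size.''
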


For the proof of Theorem~\ref{thm:sufficient}, we need an auxiliary statement.

\begin{lemma}\label{l:topo} 
Let $g: \F \mapsto \N$ be a weight function and let $\bigl(\aa^{(k)}\bigr)_{k \geq 1}$ be a sequence of infinite strings satisfying  for every $k \geq 1$
\begin{itemize}
    \item $\aa^{(k)} \in \F$;
    \item $\aa^{(k)} \prec_{lex} \aa^{(k+1)}$. 
\end{itemize} 
Then  the integer sequence  $\bigl(g(\aa^{(k)})\bigr)_{k \geq 1}$ is not bounded. 
\end{lemma}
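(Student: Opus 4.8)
The plan is to show that the weight function $g$ is strictly increasing along a strictly lexicographically increasing sequence in $\F$, so an unbounded number of strictly increasing integer values must eventually escape any bound. The key obstacle to overcome is that the lexicographic order and the weight function do not obviously cooperate: two strings can be close in lexicographic order while their weights differ by very little, and a priori the weight could even fail to be monotone along a lex-increasing sequence. So the first conceptual step is to understand how a single lexicographic increase forces a weight increase, and then to argue that infinitely many such increases cannot all be ``weight-free.''

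First I would examine what happens with a single step $\aa^{(k)}\prec_{lex}\aa^{(k+1)}$. Let $m$ be the first position where the two strings differ; then by definition of the lexicographic order $a^{(k)}_m < a^{(k+1)}_m$, while $a^{(k)}_j=a^{(k+1)}_j$ for all $j<m$. Since all digits are non-negative integers and the weights $w_n$ are \emph{positive} integers, this means $g(\aa^{(k+1)})-g(\aa^{(k)}) \geq w_m\bigl(a^{(k+1)}_m-a^{(k)}_m\bigr) - \sum_{j>m} w_j a^{(k)}_j$. The difficulty is precisely the negative tail: the later digits of $\aa^{(k)}$ (which we are discarding) carry positive weight and could in principle outweigh the gain $w_m$ at position $m$. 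So the naive hope that $g$ is monotone is not automatic, and this is where the argument must be structured carefully rather than digit-by-digit.

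The cleaner route, which I expect to be the intended one, is to argue by contradiction and exploit finiteness of support together with unboundedness of the index of the differing position. Suppose for contradiction that $\bigl(g(\aa^{(k)})\bigr)_{k\geq1}$ is bounded, say by $G$. Because each $\aa^{(k)}\in\F$ has non-negative integer digits with positive integer weights, $g(\aa^{(k)})\leq G$ bounds \emph{both} the number of nonzero digits and the size of each digit: every digit is at most $G/\min_n w_n$ and the total number of nonzero positions is at most $G/\min_n w_n$ as well (since each contributes at least $\min_n w_n \geq 1$). The remaining pressure comes from the positions of these nonzero digits. I would argue that a strictly lex-increasing sequence forces these positions, or the digit values at fixed early positions, to grow without bound, which together with the weight bound yields a contradiction. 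Concretely, among strings of weight $\leq G$, there are only finitely many with all nonzero digits confined to the first $N$ positions (for any fixed $N$); and a strictly increasing lex sequence in a finite set is impossible, so the sequence cannot stay within any finite ``window.'' Pushing the window to infinity then forces nonzero digits arbitrarily far out, but each such digit costs at least $\min_n w_n\geq 1$ in weight while earlier digits cannot decrease to compensate once they are fixed by the lex order — here is where I would have to be careful about whether earlier digits can shrink.

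The hard part will be controlling this interplay between lexicographic increase and weight along the whole sequence simultaneously, rather than for a single step; a single lex increase need not increase the weight, so the right framing is the global finiteness argument above: for any bound $G$, the set $\{\aa\in\F : g(\aa)\leq G,\ \supp\aa\subseteq\{1,\dots,N\}\}$ is finite for each $N$, and any infinite strictly lex-increasing sequence must leave every such finite set, which forces either the weight or the support to grow beyond any prescribed bound, contradicting boundedness of $g$. I would finish by making this escape-from-finite-sets argument precise and concluding that $\bigl(g(\aa^{(k)})\bigr)_{k\geq1}$ is unbounded.
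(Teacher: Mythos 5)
Your setup is on track---assuming $g(\aa^{(k)})\leq H$ for all $k$ does bound every digit by $H$ and the number of nonzero digits by $H$, exactly as in the paper---but the argument you propose to finish with has a genuine gap. Your ``escape-from-finite-sets'' step uses only that the strings $\aa^{(k)}$ are pairwise distinct: any injective sequence in $\F$ must leave each finite set $\{\aa\in\F : g(\aa)\leq H,\ \supp\aa\subseteq\{1,\dots,N\}\}$. It never uses the \emph{direction} of the lexicographic order, and no argument that ignores the direction can work, because the lemma is false for strictly lex-\emph{decreasing} sequences: take $w_n=1$ for every $n$ (the coefficients are only required to be positive integers, so they need not grow with position; the paper itself uses the plain sum-of-digits function in Example~\ref{ex_PF_not_F}) and $\aa^{(k)}=1\,0^k\,1\,0^\omega$. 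This sequence is strictly decreasing in lex order, escapes every finite window, yet has constant weight $g(\aa^{(k)})=2$. The same example exposes the non sequitur in your concluding sentence: nonzero digits ``arbitrarily far out'' force neither the weight nor the number of nonzero digits to grow, since a remote digit costs only $w_n\geq1$; far-out support is perfectly compatible with bounded weight.

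The missing ingredient is precisely the point you flagged but left unresolved (``whether earlier digits can shrink''): for an \emph{increasing} sequence with digits bounded by $H$, each coordinate eventually freezes. Indeed $\aa^{(k)}\prec_{lex}\aa^{(k+1)}$ forces $a_1^{(k)}\leq a_1^{(k+1)}$, so the first digits are non-decreasing and bounded, hence eventually constant; once they are constant, the same argument applies to the second digits, and so on. The paper packages this as compactness of $\{0,1,\dots,H\}^{\N}$ in the product topology: the increasing sequence converges coordinate-wise to a limit $\bb$ with $\aa^{(k)}\preceq_{lex}\bb$ for all $k$. Strict increase then forces $\supp\bb$ to be infinite (otherwise every $\aa^{(k)}$ sharing a sufficiently long prefix with $\bb$ would equal $\bb$), so for large $k$ the frozen prefix of $\aa^{(k)}$ already contains more than $H$ nonzero digits, contradicting $\#\supp\aa^{(k)}\leq H$. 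This stabilization step is where the hypothesis of lex-increase does its work; without it, your sketch cannot be completed.
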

\begin{proof}  Let $w_1, w_2, \ldots$ be positive integers and  $g(\aa) = \sum_{n\geq 1 }w_na_n$ for every sequence $\aa= (a_n)_{n \geq1} \in \F$. 

We proceed by contradiction. Assume that there exists
$H \in \mathbb{N}$ such that $g\big(\aa^{(k)}\big) \leq H$ for every $k \geq 1$.
Since all coefficients $w_n$ are positive integers, we derive
\begin{equation}\label{omezeny}
a_n^{(k)}\in \{0,1,\ldots, H\}\qquad \text{and}\qquad \#\supp \aa^{(k)} \leq  H \qquad \text{  for all  } n, k \geq 1.  
\end{equation}
 The set $\{0,1,\ldots, H\}^{\N}$ equipped with the product topology is a compact space and thus the increasing sequence  $\big(\aa^{(k)}\big)_{k \geq 1}$ has a limit, say  $\bb = \lim_{k\to +\infty}\aa^{(k)}$. 
 Obviously, $\aa^{(k)} \preceq_{lex} \bb$ for each $k \geq 1$. 

Let us at first show that
$\supp \bb $ is infinite. Suppose the contrary, i.e.\ that $\bb = b_1b_2\cdots b_N 0^\omega$ for some $N \geq 1 $. Since $\bb = \lim_{k\to +\infty}\aa^{(k)}$, there exists $k_0 \geq 1$ such that  $\aa^{(k)}$ has a prefix $b_1b_2\cdots b_N$ for each $k>k_0$.  
The inequality $\aa^{(k)} \preceq_{lex} \bb  =  b_1b_2\cdots b_N 0^\omega$ implies $\aa^{(k)} = \bb$ for all $k > k_0$, and that is a~contradiction with the fact that $\big(\aa^{(k)}\big)_{k\geq1}$ is strictly increasing. 

Since $\supp \bb $ is infinite, we can choose $M\geq 1$ such that the set
$$
S_M := \{n \geq 1: b_n \neq 0 \text{\ and \ }  n \leq  M\} \subset \supp \bb
$$  
has cardinality  $\#S_M > H $.
Since for all sufficiently large $k$ the string $\aa^{(k)}$ has prefix $b_1b_2 \cdots b_M$, it has to be $S_M \subset \supp \aa^{(k)}$, and that is a contradiction with \eqref{omezeny}.
\end{proof}

\begin{proof}[Proof of Theorem \ref{thm:sufficient}]   
By Lemma \ref{lem:nezoporneKoef}, it is sufficient to show that if $\zz  \in \F$  and $z =\val \zz <\frac1\delta$, then  the~$\BB$-expansion of~$z=\val \zz $  belongs to $\F$ as well. 
If $\zz$ is  admissible in base $\BB$, there is nothing left to discuss. On the other hand, if $\zz$ is not  admissible in base $\BB$, then Lemma \ref{l:LEXzvetseni} allows us to find a lexicographically greater $\BB$-representation of $z$, which also has a finite support. 
Let us show that after finitely many applications of Lemma \ref{l:LEXzvetseni}  we get the~$\BB$-expansion of $z$. 

Let us proceed by contradiction. Assume that applying Lemma \ref{l:LEXzvetseni} repeatedly yields an infinite sequence of strings
$\zz=\zz^{(0)}, \zz^{(1)}, \zz^{(2)}, \ldots $ such that for every $n \in \N$
$$
 \zz^{(n)} \ \text{ is a $\BB$-representation of $z$}, \quad
\zz^{(n)} \ \text{  is not admissible in $\BB$,} \quad \text{and}\quad 
\zz^{(n)} \prec_{lex} \zz^{(n+1)}. 
$$

Let $g$ be a weight function.
To obtain $ \zz^{(n+1)}$ we use Lemma~\ref{l:LEXzvetseni}, i.e., if  $\zz^{(n)} =  \bigl(0^{pj}\xx \bigr)\oplus \yy$, then  $ \zz^{(n+1)} =   \bigl(0^{pj}T(\xx) \bigr)\oplus \yy$. 

Properties of $g$ guarantee that 
$g\big(\zz^{(n+1)}\big)\leq g\big(\zz^{(n)}\big)$ for all $n \in \N$. 
In particular, the  values ${g\big(\zz^{(n)}\big) \in \N}$ are bounded by $g\big(\zz^{(0)}\big) = g(\zz)$, and that is a contradiction with Lemma~\ref{l:topo}. 
\end{proof}


The above theorem gives us a sufficient condition for Property (PF). When searching for bases with Property (F), by  Proposition~\ref{l_nec_F_1}, 
it suffices to limit our considerations to simple Parry alternate bases.
The following proposition shows that for such bases Properties (F) and (PF) are equivalent.


\begin{proposition}\label{p:PF=F}
Let $\BB=({\beta_1, \beta_2,\dots,\beta_p})$ be a simple Parry alternate base. Then $\BB$ has (F) if and only if $\BB$ has (PF). 
\end{proposition}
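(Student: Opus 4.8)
The plan is to prove the two implications separately. The direction (F) $\Rightarrow$ (PF) is immediate, since Property~(F) literally contains the closure under addition as one of its two conjuncts. All the content is in the converse: assuming (PF), I must establish closure under subtraction, i.e.\ that $x,y\in\Fin(\BB)$ with $x-y\in[0,1)$ forces $x-y\in\Fin(\BB)$.

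First I would reduce an arbitrary subtraction to the removal of a single positional unit $\tfrac{1}{\beta_1\cdots\beta_m}$. Writing $d_{\BB}(y)=b_1\cdots b_K0^\omega$, the number $y$ is a finite nonnegative integer combination of such units. Subtracting these units from $x$ one at a time, every intermediate value equals $(x-y)$ plus the sum of the not-yet-removed units; hence each intermediate value lies in $[0,1)$ and dominates the unit about to be removed. So it suffices to prove the single-unit statement $P(m)$: if $z\in\Fin(\BB)$ and $z\geq\tfrac{1}{\beta_1\cdots\beta_m}$, then $z-\tfrac{1}{\beta_1\cdots\beta_m}\in\Fin(\BB)$.

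I would prove $P(m)$ by induction on $m$, writing $d_{\BB}(z)=a_1a_2\cdots$. For $m=1$, the greedy algorithm gives $a_1=\floor{\beta_1 z}\geq 1$ because $z\geq\tfrac1{\beta_1}$; decrementing the first digit yields a nonnegative string in $\F$ of value $z-\tfrac1{\beta_1}<1$, which lies in $\Fin(\BB)$ by Lemma~\ref{lem:nezoporneKoef}. In the inductive step, if $a_m\geq1$ I again simply decrement the $m$-th digit and finish via Lemma~\ref{lem:nezoporneKoef}. The genuine case is $a_m=0$, where the greedy algorithm forces $z\geq\tfrac1{\beta_1\cdots\beta_{m-1}}$: indeed $z<\tfrac1{\beta_1\cdots\beta_{m-1}}$ would give $a_1=\cdots=a_{m-1}=0$, and together with $a_m=0$ this yields $z<\tfrac1{\beta_1\cdots\beta_m}$, contradicting the hypothesis. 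Thus $P(m-1)$ applies and $z':=z-\tfrac1{\beta_1\cdots\beta_{m-1}}\in\Fin(\BB)$. I then rewrite
\begin{equation*}
z-\frac{1}{\beta_1\cdots\beta_m}=z'+\left(\frac{1}{\beta_1\cdots\beta_{m-1}}-\frac{1}{\beta_1\cdots\beta_m}\right),
\end{equation*}
and observe that the bracketed difference equals $\val\big(0^{m-1}(t_1^{(m)}-1)t_2^{(m)}t_3^{(m)}\cdots\big)$, a nonnegative string in $\F$; this uses that $\BB$ is simple Parry, so $\t^{(m)}=d_{\BB^{(m)}}(1)\in\F$, together with $t_1^{(m)}=\floor{\beta_m}\geq1$. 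Its value is $<\tfrac1{\beta_1\cdots\beta_{m-1}}<1$, hence it lies in $\Fin(\BB)$ by Lemma~\ref{lem:nezoporneKoef}; as the total $z-\tfrac1{\beta_1\cdots\beta_m}$ lies in $[0,1)$, closure under addition (PF) places it in $\Fin(\BB)$, completing the induction.

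The hard part is precisely the borrowing case $a_m=0$, and it is there that the simple Parry hypothesis is indispensable: the identity $\tfrac{1}{\beta_1\cdots\beta_{m-1}}=\val(0^{m-1}\t^{(m)})$ converts a ``borrow'' into an honest finite nonnegative string only when $\t^{(m)}$ has finite support, while the greedy dichotomy guarantees that the induction hypothesis $P(m-1)$ is available exactly when the borrow is needed. Every other step is carried by Lemma~\ref{lem:nezoporneKoef} and the purely additive strength of (PF), so no further structural input is required.
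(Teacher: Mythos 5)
Your proof is correct. It shares the paper's global strategy---reduce the subtraction $x-y$ to removing the units $\tfrac{1}{\beta_1\cdots\beta_m}$ of $y$ one at a time (in effect an induction on the digit sum of $d_{\BB}(y)$), re-normalizing after each removal via (PF) through Lemma~\ref{lem:nezoporneKoef}---and both arguments rest on the same identity $\tfrac{1}{\beta_1\cdots\beta_{m-1}}=\val\bigl(0^{m-1}\t^{(m)}\bigr)$, which is exactly where the simple Parry hypothesis enters. The mechanism for a single unit removal is, however, genuinely different. The paper aligns a nonzero digit of the minuend at position $j$ with a nonzero digit of the subtrahend at position $j+k$, $k\geq 0$ (such positions exist because admissible strings are ordered lexicographically as their values are), and invokes its Lemma~\ref{l:claim}: a (PF)-free combinatorial construction---induction on $k$, adding the value-zero strings $0^{j+k-1}(-1)\t^{(j+k+1)}$---of a representation of $\val(0^{j-1}10^\omega)$ having a nonzero digit at position $j+k$, so that the difference of the two units is visibly a nonnegative string in $\F$. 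You instead run a borrowing induction $P(m)$ along positions, powered by the greedy dichotomy: either $a_m\geq 1$ (decrement the digit), or the greedy algorithm forces $z\geq\tfrac{1}{\beta_1\cdots\beta_{m-1}}$, so you borrow via $P(m-1)$ and pay back with the nonnegative finite string $0^{m-1}\bigl(t_1^{(m)}-1\bigr)t_2^{(m)}t_3^{(m)}\cdots$. Your route avoids both the lexicographic alignment and Lemma~\ref{l:claim}, at the price of invoking (PF) at every level of a nested induction rather than once per removed unit; the paper's route isolates all the combinatorics in a lemma that does not use (PF) at all, which makes the single application of (PF) per step more transparent. Both proofs are complete and of comparable length.
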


We first prove an auxiliary statement.

\begin{lemma}\label{l:claim}
Let $\BB$ be a simple Parry alternate base.
For all $j,k\in\N$, $j\geq 1$, there exists a $\BB$-representation of the number $\val (0^{j-1}10^\omega)$ of the form  $0^{j-1}u_j u_{j+1}u_{j+2} \cdots \in \F$ such that $u_{j+k}\geq 1$.
\end{lemma}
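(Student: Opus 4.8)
The plan is to produce, for each target position $j+k$, an explicit finite $\BB$-representation of $\val(0^{j-1}10^\omega)$ whose $(j+k)$-th digit is at least $1$, by "unfolding" the single digit $1$ using the finiteness of the expansions $\t^{(\ell)}$. Since $\BB$ is a simple Parry alternate base, each $d_{\BB^{(\ell)}}(1)=\t^{(\ell)}=t^{(\ell)}_1 t^{(\ell)}_2\cdots t^{(\ell)}_m 0^\omega\in\F$, and the defining relation of the expansion of $1$ gives, for the base $\BB^{(j)}$,
\begin{equation*}
1=\sum_{n\geq 1}\frac{t^{(j)}_n}{\beta_j\beta_{j+1}\cdots\beta_{j+n-1}}.
\end{equation*}
Translating this through the shift relation~\eqref{eq:posunrozvojealternate} (more precisely its single-index version~\eqref{eq:posunrozvoje}), the string $0^{j-1}\t^{(j)}$ is a finite $\BB$-representation of $\val(0^{j-1}10^\omega)$. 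Thus whenever $t^{(j)}_{k+1}\geq 1$, the string $\xx=0^{j-1}\t^{(j)}$ already works, because its digit in position $j+k$ is $t^{(j)}_{k+1}\geq 1$.

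The difficulty is that $\t^{(j)}$ has finite support, so for $k$ large the digit $t^{(j)}_{k+1}$ is zero and the naive choice fails; the main obstacle is therefore to reach arbitrarily deep positions $j+k$ while keeping a nonzero digit there. First I would handle this by iterating the unfolding: at the last nonzero position of the current representation I substitute the nonzero digit $t^{(\ell)}_s\geq 1$ by rewriting the corresponding suffix $0^{\cdots}\,t^{(\ell)}_s 0^\omega$ using the relation coming from $\t^{(\ell+s-1)}$. Concretely, if a representation has a nonzero digit $c\geq 1$ sitting at position $i$ (with $\BB^{(i)}=\BB^{(\ell)}$ after reduction $\bmod\, p$), then because $t^{(\ell)}_1\geq 1$ one has the local identity replacing that digit's contribution by the finite block $0^{p-1}$ followed by a scaled copy of $\t^{(\ell)}$, pushing new nonzero digits further to the right without leaving $\F$ and without changing the value. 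Repeating this lengthening step a controlled number of times produces nonzero digits at positions that grow without bound, so in particular one eventually obtains a nonzero digit at or beyond any prescribed position $j+k$.

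To make the argument clean I would set it up as an induction on $k$: the base case $k=0$ is trivial (the digit $u_j$ itself can be taken nonzero, e.g. from $0^{j-1}\t^{(j)}$ since $t^{(j)}_1\geq 1$), and the inductive step shows that from a representation with a nonzero digit at some position $j+k'\leq j+k$ one can manufacture a representation with a nonzero digit strictly further right, still finite and still of the correct value. The key structural fact I would invoke is that in a simple Parry alternate base every $\t^{(\ell)}$ starts with $t^{(\ell)}_1\geq 1$ (because $\beta_\ell>1$ forces $\lfloor\beta_\ell\rfloor\geq 1$), which guarantees that each unfolding genuinely produces a new nonzero digit and so the rightmost occupied position strictly increases at each step. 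I expect the bookkeeping of indices $\bmod\, p$ and the verification that the value $\val(0^{j-1}10^\omega)$ is preserved at each substitution to be the only genuinely delicate points; both reduce to repeated application of the expansion-of-$1$ identity and the shift relation~\eqref{eq:posunrozvoje}, so no substantively new idea is required beyond careful iteration.
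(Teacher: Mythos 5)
Your proposal is correct and takes essentially the same approach as the paper: induction on $k$, where the inductive step uses the zero-value string $0^{i-1}(-1)\,\t^{(i+1)}$ to decrement a guaranteed nonzero digit at position $i$ and add the block $\t^{(i+1)}$ starting at position $i+1$, so that $t_1^{(i+1)}\geq 1$ places a nonzero digit exactly one position further right while preserving value, non-negativity, and finite support. The only blemishes are bookkeeping slips (the expansion to invoke for a digit at position $i$ is $\t^{(i+1)}$, not your $\t^{(\ell+s-1)}$, and the conclusion must be a nonzero digit \emph{at} position $j+k$, not merely ``at or beyond''), which the exact one-step-right mechanism fixes automatically.
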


\begin{proof}
We proceed by induction on $k$.
For $k=0$, the statement is trivial.
Let $k\geq1$. By induction hypothesis, there exists a string $\uu=0^{j-1}u_ju_{j+1}u_{j+2}\cdots\in\F$ 
such that 
$\val (0^{j-1}10^\omega) = \val\uu$ and ${u_{j+k}\geq 1}$. 
From the definition of $\t^{(j+k+1)}$, we derive that the~string
${\mathbf v}=0^{j+k-1}(-1)\t^{(j+k+1)}$ of integer digits has value~0, i.e.\ $\val {\mathbf v} = 0$.
The string $\uu\oplus {\mathbf v}$ has all digits non-negative and has finite support. Its digit at position $j+k+1$ is equal to $u_{j+k+1}+t_1^{(j+k+1)}\geq 1$. 
\end{proof}

\begin{proof}[Proof of Proposition~\ref{p:PF=F}]
Clearly, if $\BB$ satisfies (F), then it satisfies (PF). For the opposite implication, 
assume that a simple Parry alternate base $\BB$ has Property (PF). 
Therefore, according to Lemma~\ref{lem:nezoporneKoef}, if a~number  $z \in [0,1)$ has a $\BB$-representation  in $\F$, then the~$\BB$-expansion of $z$ belongs to $\F$ as well. 

In order to show Property (F), it is thus sufficient to verify that for any $x,y\in\Fin(\BB)\cap[0,1)$ such that $z:=x-y\in(0,1)$ we can find a  $\BB$-representation of $z$ with finite support.
Let $\xx,\yy\in\F$ be the~$\BB$-expansions of numbers $x,y\in[0,1)$ such that $x=\val\xx >  y=\val\yy >0$. 

We proceed by induction on the sum of digits in $\yy$. If the sum is 0, then $y=0$ and the statement is trivial. Suppose the sum of digits in $\yy$ is positive.
Since $\val\xx > \val\yy$, there exist  $j, k \in \N$, $j\geq 1$, such that $\xx = \xx' \oplus \xx''$  and   $\yy = \yy' \oplus \yy'' $,  where $\xx'' =  0^{j-1}10^\omega$ and $\yy'' = 0^{j+k-1}10^\omega $.  
It follows directly from Lemma~\ref{l:claim} that $\val{\xx}'' - \val{\yy}''$ has a finite $\BB$-representation, say $\zz'' \in \F$.
Hence 
$$
z=\val\xx -  \val\yy = \val\xx' +  \val \zz'' -  \val\yy'.
$$
Property (PF) guarantees 
that  $\val \xx' +  \val \zz''$ has a finite $\BB$-expansion, say  $\xx_{new}$.  Thus
$$
z=\val\xx -  \val\yy = \val\xx_{new} -  \val\yy'.
$$
The sum of digits in $\yy'$ is smaller by 1 than the sum of digits in $\yy$. Induction hypothesis implies that $z$ has a finite $\BB$-representation, and, by Property (PF), also a finite $\BB$-expansion. 
\end{proof}

As a consequence of Proposition~\ref{p:PF=F}, we formulate a sufficient condition for Property~(F).

\begin{theorem}\label{t:sufficientF}
      Let $\BB=({\beta_1, \beta_2,\dots,\beta_p})$ be a simple Parry alternate base satisfying the~weight property. Then $\BB$ has Property (F).
\end{theorem}

\section{A class of bases with finiteness property}\label{sec:nerovnosti}

In this section, we provide the proof of Theorem~\ref{thm:nerovnosti}. In particular, we show that an alternate base $\BB$ for~which the sequences $\t^{(\ell)}$ satisfy the set of inequalities~\eqref{eq:GFS}, has Property (PF).  
This will be shown by verifying that a base $\BB$ satisfying~\eqref{eq:GFS} has the weight property, and thus satisfies the assumptions of~Theorem~\ref{thm:sufficient}.
Firstly, we show the rewritings of the strings in the set $\S$ required by Definition \ref{def:rewriting}.



\begin{proposition}\label{typ1}  Let $\BB$ be an alternate base and 
 $d_{\BB^{(\ell)}} = \t^{(\ell)}$ satisfy \eqref{eq:GFS}.  Then ${\BB}$ is a~Parry alternate base and  has the rewriting property. 
\end{proposition}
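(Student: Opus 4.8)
The plan is to first extract the structural consequence of the hypothesis \eqref{eq:GFS}, which simultaneously yields the Parry property and supplies the two facts about digits (nonnegativity, eventual vanishing) that the rewriting will need, and then to build the map $T$ explicitly on each of the two families of strings in $\S$.

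First I would rewrite \eqref{eq:GFS} in the index-free form $t_n^{(m)}\geq t_{n+1}^{(m-1)}$, valid for every $n\geq1$ and every $m\in\Z_p$. Reading the double array $\bigl(t_n^{(m)}\bigr)$ along the $p$ ``diagonals'' on which $(m+n)\bmod p$ is constant (passing from $t_n^{(m)}$ to $t_{n+1}^{(m-1)}$ leaves $m+n$ unchanged), the hypothesis says that each diagonal is a non-increasing sequence of non-negative integers, hence eventually constant. Since there are only $p$ diagonals, a single index $N$ works for all of them; as $t_n^{(\ell)}$ and $t_{n+p}^{(\ell)}$ lie on the same diagonal, this gives $t_n^{(\ell)}=t_{n+p}^{(\ell)}$ for all $n\geq N$, so each $\t^{(\ell)}$ is eventually periodic with period $p$ and $\BB$ is a Parry alternate base.

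The engine for the rewriting is a single ``fold'' identity: since $\t^{(n+1)}=d_{\BB^{(n+1)}}(1)$ represents $1$ in base $\BB^{(n+1)}$, the strings $0^{n-1}10^{\omega}$ and $0^{n}\t^{(n+1)}$ have the same value in base $\BB$ (indices mod $p$). For a string of the first type, $\aa=0^{p+\ell-1}t_1^{(\ell)}\cdots t_{k-1}^{(\ell)}(t_k^{(\ell)}+1)0^{\omega}$, I would write its content (as a value identity) as $\t^{(\ell)}$, plus a unit at depth $k$, minus the tail $t_{k+1}^{(\ell)}t_{k+2}^{(\ell)}\cdots$. Folding the full block $0^{p+\ell-1}\t^{(\ell)}$ up to the single unit $0^{p+\ell-2}10^{\omega}$, and unfolding the interior unit (at position $p+\ell-1+k$) into $\t^{(\ell+k)}$, makes the unfolded block and the subtracted tail both start at position $p+\ell+k$; their digitwise difference has entries $t_m^{(\ell+k)}-t_{m+k}^{(\ell)}$. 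I would therefore set
$$
T(\aa)=0^{p+\ell-2}\,1\,0^{k}\,\bigl(t_1^{(\ell+k)}-t_{k+1}^{(\ell)}\bigr)\bigl(t_2^{(\ell+k)}-t_{k+2}^{(\ell)}\bigr)\cdots .
$$
Then $\val T(\aa)=\val\aa$ by construction, and $T(\aa)\succ_{lex}\aa$ because the two strings first differ at position $p+\ell-1$, where $T(\aa)$ carries a $1$ and $\aa$ a $0$. The strings of the second type are built from a full expansion $\t^{(\ell)}$ of $1$; here the fold identity applies directly, carrying the block up to a single higher unit, and the difference part is empty.

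The main obstacle — and the only place where \eqref{eq:GFS} is genuinely used — is to check that the difference string in the first case is a legitimate element of $\F$, i.e.\ that its digits are non-negative and eventually zero. This is exactly where the diagonal viewpoint is decisive: $t_m^{(\ell+k)}$ and $t_{m+k}^{(\ell)}$ have equal index sums mod $p$, so they sit on the \emph{same} diagonal, at depths $m$ and $m+k$ with $m<m+k$; monotonicity along the diagonal gives $t_m^{(\ell+k)}\geq t_{m+k}^{(\ell)}\geq0$, while the eventual constancy from the first step forces these two entries to coincide for all large $m$, so the difference is eventually $0$. Hence $T(\aa)\in\F$ for every $\aa\in\S$, which is precisely the rewriting property.
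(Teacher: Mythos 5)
Your proposal is correct and takes essentially the same route as the paper: the same diagonal-monotonicity reading of \eqref{eq:GFS} (the paper's non-increasing sequences $\bigl(t^{(\ell-n)}_{n+1}\bigr)_{n\in\N}$) gives the Parry property, your map $T$ coincides with the paper's rewriting on both types of strings in $\S$, and your ``fold'' identity is exactly the paper's relation $w_{m-1}=\sum_{n\geq1}w_{n+m-1}t^{(m)}_{n}$ used to check preservation of value. If anything, you are slightly more careful than the paper in one spot: you explicitly verify non-negativity of the tail digits $t_m^{(\ell+k)}-t_{m+k}^{(\ell)}$ by chaining \eqref{eq:GFS} along a diagonal, whereas the paper only spells out their eventual vanishing.
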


\begin{proof} 
Inequalities~\eqref{eq:GFS} imply that for every  $\ell \in \Z_p$, the sequence $\Big(t^{(\ell -n)}_{n+1}\Big)_{n\in \N}$ is non-increasing, and thus eventually constant. 
Let us denote the constant $s_\ell$, i.e.\ we have $t^{(\ell-n)}_{n+1}=s_{\ell}$ for all sufficiently large $n$. Therefore, we can write for some  $m \in \N$ that
\begin{equation}
\label{eq:opakujise}\t^{(\ell)} = t^{(\ell)}_1 \cdots  t^{(\ell)}_{pm} (s_{\ell} s_{\ell+1}\cdots s_{\ell+p-1})^\omega \quad \text{for every }  \ell\in  \Z_p,
\end{equation}
where the index of $s$ is considered $\!\!\!\mod p$.
In other words, inequalities~\eqref{eq:GFS} imply, that $\BB$ is a Parry alternate base. Moreover, either $\BB$ is a simple Parry base (if $s_{\ell}=0$ for every $\ell\in\Z_p$), or no $\t^{(\ell)}$ has finite support. 

In order to show the rewriting property, we have to find, for each string $\aa\in\S$, a string $T(\aa)$ satisfying the requirements~\eqref{eq:transkripce}, i.e.
$$
a) \ \ T(\aa) \in \F,\qquad b) \ \ \val\aa = \val  T(\aa) \text{  \ \ \ and \ \ \ } c) \ \ \aa \prec_{lex} T(\aa) . 
$$
The proof will be divided according to the form of the string $\aa\in\S$. The strings are of two types: 

\noindent
\textbf{Type 1:} For $\ell,i \in \Z_p$ and $k \in \N$
\begin{equation*}
\begin{aligned}
    \xx_{\ell, i, k} &= 0^{p+\ell-1}  t_1^{(\ell)}t_2^{(\ell)} \cdots  t_{pk+i-1}^{(\ell)} \left(t_{pk+i}^{(\ell)}+1\right)0^\omega,\\
    T(\xx_{\ell, i, k}) &= 0^{p+\ell-2}1 0^{pk+i} \left(t_1^{(\ell+i)}-t_{pk+i+1}^{(\ell)}\right)\left(t_2^{(\ell+i)}-t_{pk+i+2}^{(\ell)}\right) \left(t_3^{(\ell+i)}-t_{pk+i+3}^{(\ell)}\right)\cdots. 
\end{aligned}
\end{equation*}
\textbf{Type 2:} For $\ell\in\Z_p$
\begin{equation*}
    \xx_\ell = 0^{p+\ell -1}\t^{(\ell)}, \qquad
    T(\xx_\ell) = 0^{p+\ell-2}1 0^\omega. 
\end{equation*}

Realize that if the periodic part in~\eqref{eq:opakujise} is non-vanishing $(s_1s_2 \cdots s_p)^\omega \neq 0^\omega$,  i.e.\,$\BB$ is not a simple Parry base, then $\S$ contains only strings of Type 1.

\medskip
\noindent\textbf{Type 1:}
For $\xx_{\ell, i, k}$, requirement c) is trivially satisfied. In order to show a), we use \eqref{eq:opakujise}.  As $t^{(\ell) }_{n} = s_{\ell+n-1}$ for every  $n \geq pm+1$, we have 
$$
t^{(\ell +i)}_{n} -  t^{(\ell)}_{pk+i+n} =
s_{\ell+i+n-1} - s_{\ell+pk +i+n-1} = 0 \quad \text{for every $n \geq pm+1$}.
$$
Consequently, 
\begin{equation}\label{eq:konecne} T(\xx_{\ell, i, k}) =0^{p+\ell-2}1 0^{pk+i} \left(t_1^{(\ell+i)}-t_{pk+i+1}^{(\ell)}\right)\left(t_2^{(\ell+i)}-t_{pk+i+2}^{(\ell)}\right) \cdots \left(t_{pm}^{(\ell+i)}-t_{pm+pk+i}^{(\ell)}\right)0^\omega \in \F,
\end{equation} 
which proves a).

In order to show b) we need to verify \begin{equation}\label{eq:konecne2}
  W_{p(k+1)+i +\ell -1} + \sum_{j=1}^{pk+i}t_{j}^{(\ell)}W_{j+p+\ell-1} =  W_{p+\ell-1}  +\sum_{j\geq 1} \bigl( t^{(\ell+i)}_j - t^{(\ell)}_{pk+i+j}\bigr)W_{j+p(k+1) +\ell +i-1}\,,
\end{equation}
where $W_n = \left(\prod_{j=1}^n{\beta_j}\right)^{-1}$ for all $n\in \N$, $n\geq 1$. 
Equivalently, 
\begin{equation}\label{w}
     \sum_{n\geq 1}W_{n+p+\ell-1}\,t^{(\ell)}_{n} - W_{p+\ell-1}
     = \sum_{n\geq 1} W_{n+p(k+1) +\ell +i-1}\,t^{(\ell + i)}_{n}
     - W_{p(k+1)+i +\ell -1} \,.
\end{equation} 
The definition of $\Bigl( t_n^{(r)}\Bigr)_{n\geq1}$ implies  $1=\sum_{n\geq 1}W'_{n}t^{(r)}_{n}$,   where  $W'_n = \Bigl(\prod_{j=r}^{n+r-1}{\beta_j}\Bigr)^{-1}\!\!= \frac{W_{n+r-1}}{W_{r-1}}$.
Hence 
\begin{equation}\label{eq:wPomocne}
W_{r-1} = \sum_{n\geq1}W_{n+r-1}t^{(r)}_{n}. 
\end{equation}
Using this fact  with $r=p+\ell$  on the left-hand side of~\eqref{w} yields 0. Similarly, using~\eqref{eq:wPomocne} with  $r =p(k+1)+ \ell+i$ gives 0 on the right-hand side of~\eqref{w}.  

\medskip
\noindent\textbf{Type 2:}
If $\BB$ is a simple Parry base, all three requirements a), b), c) for strings of the form $\xx_\ell$ are satisfied trivially. 
\end{proof}

\begin{example}
In the notation of the proof of Proposition~\ref{typ1}, the strings $\aa_1,\dots,\aa_5$ from Example~\ref{Ex:prikladSimple} are
$$
\aa_1=\xx_{1,1,0},\quad 
\aa_2=\xx_{1,2,0},\quad 
\aa_3=\xx_1,\quad
\aa_4=\xx_{2,1,0},\quad
\aa_5=\xx_2,
$$
and the rewritings $T(\aa_j)$ given in
Example~\ref{Ex:prikladSimple} are obtained as described in the proof. 
\end{example}

In order to prove Theorem~\ref{thm:nerovnosti}, it remains to show that~\eqref{eq:GFS} implies the existence of a~weight function for $\BB$.  The weight function must satisfy certain inequalities for every string in $\S$.

 Recall that \eqref{eq:GFS} implies that $\BB$ is a Parry base. In particular, $\t^{(\ell)} = t^{(\ell)}_1 \cdots  t^{(\ell)}_{mp} (s_{\ell} s_{\ell+1}\cdots s_{\ell+p-1})^\omega$, see  \eqref{eq:opakujise}. For $\ell \in \mathbb{Z}_p$,  denote 
$$
T^{(\ell)}_j=\sum_{{\substack{1\leq n \leq mp \\ n=j \!\!\!\!\mod p}}}\!\!\!\!\!\!t_n^{(\ell)} \qquad \text{for all } j \in \Z_p=\{1,2,\dots,p\}.
$$

Note that  $T_{j}^{(\ell)} \in \N$   and,  moreover,  $T_{1}^{(\ell)} \geq t_1^{(\ell)}\geq 1$.  Inequalities \eqref{eq:GFS} imply that for all $\ell \in \mathbb{Z}_p$ 
\begin{equation}\label{Tecka}
    T^{(\ell )}_1 \geq      T^{(\ell -1 )}_2 \geq  T^{(\ell -2 )}_3 \geq  \cdots \geq    T^{(\ell -p+2)}_{p-1}  \geq    T^{(\ell -p+1)}_p  . 
\end{equation}

\noindent Now consider matrices  $I,  P, K \in \N^{p\times p}$, where  $I$ is the identity matrix, $P$ is the permutation matrix
\begin{equation}\label{eq:maticeK}
P=
\begin{pmatrix} 
    {\Theta} & I_1 \\
    I_{p-1} & \Theta
    \end{pmatrix},
\qquad  \text{and} \qquad 
K_{\ell , j} = T^{(\ell+1)}_{j-\ell}\,,\quad \text{for all \ \ } \ell, j \in \{1,2,\ldots, p\}.
\end{equation}
The entries of the permutation matrix $P$ can also by written as $P_{\ell,j}=\delta_{\ell,j+1}$ where $\delta_{i,j}$ is the Kronecker symbol whose indices are considered $\!\!\!\mod p$, as well as both indices of $T^{(\ell+1)}_{j-\ell}$ in the above relation.

In the sequel, we adopt the convention that the $j$-th component of a vector $\vec{u}$ is denoted by $(\vec{u})_j$ and that for vectors $\vec{u},\vec{v}$ we write $\vec{u}\leq \vec{v}$ if $(\vec{u})_j\leq (\vec{v})_j$ for every $j$.

\begin{lemma}\label{existujeU} 
There exist positive integers $u_1,u_2,\ldots, u_p$ such that for the vector  $\vec{u} = (u_1,u_2,\ldots, u_p)^T$ it holds that
$$ 
(I-P)(K-I)\vec{u} = \vec{0}\quad \text{and} \quad K\vec{u}\geq \vec{u} ,
$$
where by the vector inequality we mean inequality in each component.
\end{lemma}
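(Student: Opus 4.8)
The plan is to recast the two requirements as a single Perron--Frobenius statement about the matrix $B:=(I-R)(K-I)$. First observe that $I-R$ annihilates the all-ones vector $\mathbf 1$, so $\ker(I-R)=\mathrm{span}(\mathbf 1)$. Hence $(I-R)(K-I)\vec u=\vec 0$ holds if and only if $(K-I)\vec u=c\,\mathbf 1$ for some scalar $c$, and in that case $K\vec u-\vec u=c\,\mathbf 1$, so the requirement $K\vec u\geq\vec u$ is equivalent to $c\geq 0$. Thus it suffices to produce a componentwise positive integer vector $\vec u$ for which $(K-I)\vec u$ is a constant vector with non-negative entry. (For $p=1$ one has $I-R=\Theta$, the first condition is vacuous, and $K=[\,\sum_n t^{(1)}_n\,]$ with $\sum_n t_n^{(1)}\ge t_1^{(1)}\ge1$, so $\vec u=(1)$ works; assume $p\ge 2$ below.)

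Next I would analyse the sign pattern of $B$. From $\big((I-R)M\big)_{\ell,j}=M_{\ell,j}-M_{\ell-1,j}$ one gets $[(I-R)K]_{\ell,j}=T^{(\ell+1)}_{j-\ell}-T^{(\ell)}_{j-\ell+1}$. For $j\ne\ell$, writing $m=(j-\ell)\bmod p\in\{1,\dots,p-1\}$, inequalities~\eqref{Tecka} in the form $T^{(b)}_m\geq T^{(b-1)}_{m+1}$ give $[(I-R)K]_{\ell,j}=T^{(\ell+1)}_m-T^{(\ell)}_{m+1}\geq 0$, while for $j=\ell$ the bottom of the same chain gives $T^{(\ell)}_1\ge T^{(\ell+1)}_p$, so $[(I-R)K]_{\ell,\ell}=T^{(\ell+1)}_p-T^{(\ell)}_1\leq 0$. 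Subtracting $I-R$ (diagonal $1$, with $-1$ in each position $(\ell,\ell-1)$) shows that $B$ has non-positive diagonal $B_{\ell,\ell}=T^{(\ell+1)}_p-T^{(\ell)}_1-1\leq -1$, non-negative off-diagonal entries, and in particular $B_{\ell,\ell-1}\geq 1$. Moreover, since $R$ is a permutation matrix, $\mathbf 1^{T}R=\mathbf 1^{T}$, hence $\mathbf 1^{T}(I-R)=\mathbf 0^{T}$ and $\mathbf 1^{T}B=\mathbf 0^{T}$: every column of $B$ sums to zero.

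With this structure the existence of $\vec u$ is immediate. Choose $c_0\in\N$ so large that $N:=B+c_0 I$ is entrywise non-negative. The entries $B_{\ell,\ell-1}\geq1$ make the associated digraph strongly connected (the cycle $1\to p\to\cdots\to 2\to1$), so $N$ is irreducible; and $\mathbf 1^{T}N=c_0\mathbf 1^{T}$ shows every column of $N$ sums to $c_0$, whence $\rho(N)=c_0$. By the Perron--Frobenius theorem $c_0$ is a simple eigenvalue of $N$ with a strictly positive right eigenvector $\vec u$, which satisfies $(B+c_0I)\vec u=c_0\vec u$, i.e.\ $B\vec u=\vec 0$. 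Since $c_0$ is simple, $\ker B=\ker(N-c_0I)$ is one-dimensional, and as $B\in\Z^{p\times p}$ I may rescale $\vec u$ to a positive integer vector. By the first paragraph $(K-I)\vec u=c\,\mathbf 1$ for some $c$, and it remains to check $c\ge0$.

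For the sign of $c$ I would pair the identity $(K-I)\vec u=c\,\mathbf 1$ with a positive left Perron eigenvector $\vec w$ of $K$, which is irreducible and non-negative because $K_{\ell,\ell+1}=T^{(\ell+1)}_1\ge t^{(\ell+1)}_1\ge1$. Writing $\rho=\rho(K)$ and $\vec w^{T}K=\rho\,\vec w^{T}$ gives $(\rho-1)\,\vec w^{T}\vec u=\vec w^{T}(K-I)\vec u=c\,(\vec w^{T}\mathbf 1)$, and since $\vec w,\vec u>0$ the scalars $\vec w^{T}\vec u$ and $\vec w^{T}\mathbf 1$ are positive, so $c$ has the same sign as $\rho-1$. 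Finally each row sum of $K$ equals $\sum_{i=1}^p T^{(\ell+1)}_i=\sum_{n\ge1}t^{(\ell+1)}_n\ge t^{(\ell+1)}_1\ge1$, hence $\rho\ge\min_\ell(\text{row sum})\ge1$ and therefore $c\ge0$, completing the construction. The main obstacle is the second step: extracting from the somewhat opaque inequalities~\eqref{Tecka} the exact sign pattern of $B$ (non-negative off-diagonal, negative diagonal, zero column sums) that turns $B$ into the generator of the required Perron--Frobenius problem; once this is established, both the positivity of $\vec u$ and the inequality $c\ge0$ follow from standard spectral facts about non-negative matrices.
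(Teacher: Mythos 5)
Your proposal is correct and follows essentially the same route as the paper's proof: establish the sign pattern of $(I-R)(K-I)$ from the inequalities~\eqref{Tecka}, shift by a multiple of the identity to get a non-negative irreducible matrix whose positive left eigenvector is $\mathbf{1}$, invoke Perron--Frobenius to obtain a positive integer right kernel vector $\vec u$, and then deduce $(K-I)\vec u = c\,\mathbf{1}$ with $c\geq 0$. The only minor divergence is your last step, where you bound the sign of $c$ via a left Perron eigenvector of $K$ and its row sums, whereas the paper gets $c\geq 0$ more directly from the column-sum inequality $\mathbf{1}^{T}K\geq\mathbf{1}^{T}$; both are valid.
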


\begin{proof} 
First, we show that all non-diagonal elements of the matrix 
$M=(I-P)(K-I)$ 
are non-negative. Indeed, if $\ell\neq j$, then, considering the indices 
$\!\!\!\mod p$, we can write 
$$
M_{\ell, j} = K_{\ell, j} - K_{\ell -1, j} +\delta_{\ell, j+1} = \underbrace{ T_{j-\ell}^{(\ell+1)} - T_{j-\ell+1}^{(\ell)}}_{\geq 0} +\underbrace{\delta_{\ell, j+1}}_{\geq 0}\geq 0,
$$
where we have used inequalities \eqref{Tecka}.
Moreover, $M_{\ell+1, \ell}\geq \delta_{\ell+1, \ell +1 }= 1$ for all $\ell \in \Z_p=\{1,2,\ldots,p\}$.
Therefore, there exists a positive integer $c$ such that the matrix $M+cI$ is non-negative and irreducible. 
The row vector $\vec{1} = (1,1,\ldots, 1)$ 
has property $\vec{1}(I-P) = \vec{0}$, and thus $\vec{{1}}M = \vec{0}$. Therefore, $\vec{{1}}$ is a positive left eigenvector of $M+cI$ corresponding to the eigenvalue $c$. 
According to Perron-Frobenius theorem, $c\in \N$ is equal to the spectral radius of $M+cI$. Therefore, there exists a positive column vector $\vec{u}= (u_1,u_2, \ldots, u_p)^T$ such that $(M+cI) \vec{u} = c\vec{u}$. 
Since $M+cI$  has integer components, the eigenvector $\vec{u}$ corresponding to $c$ can be chosen to have integer components. 
Moreover, $M\vec{u}= \vec{0}$. 

To prove the second part of the lemma, i.e.\ $K\vec{u}\geq \vec{u}$, we denote $\vec{\varepsilon} = (K-I)\vec{u}=(\varepsilon_1,\dots,\varepsilon_p)^T$.  Then 
$$
 \vec{0} = M\vec{u} = (I-P) (K-I)\vec{u}  =  (I-P) \vec{\varepsilon} = (\varepsilon_1-\varepsilon_p, \varepsilon_2 - \varepsilon_1,  \varepsilon_3 - \varepsilon_2,\ldots, \varepsilon_p -\varepsilon_{p-1})^T\,.
$$
 Hence all the components of the vector  $\vec{\varepsilon} = (K-I)\vec{u}$ have the same value, or, equivalently,  
 \begin{equation}\label{eq:epsilon}
 \vec{\varepsilon} = \kappa (1,1, \ldots , 1)^T\quad\text{ for some }\kappa \in \mathbb{R},
 \end{equation}
 and thus
\begin{equation}
    \label{kapa}
    K \vec{u} = \vec{u} + \kappa  (1,1, \ldots , 1)^T.
\end{equation}
In order to prove the inequality $K\vec{u}\geq  \vec{u}$ it suffices to show that $\kappa \geq 0$.
Recall that all elements of $K$ are non-negative
 and $K_{\ell, \ell+1}= T_{1}^{(\ell+1)}\geq  t_{1}^{(\ell+1)}\geq 1$ for all $\ell\in \mathbb{Z}_p$.  
 In~other words,  each column of $K$ contains at least one element $\geq 1$.  
 Consequently, $\vec{{1}} \leq \vec{{1}}\,K$, and considering \eqref{kapa} we obtain
$$ 
\vec{{1}}\, \vec{u}\leq \vec{{1}}\,K \,\vec{u} = \vec{{1}}\, \bigl(\vec{u} + \kappa (1,1,  \ldots, 1)^T\bigr) =  \vec{{1}}\, \vec{u} + p \kappa. 
$$ 
Necessarily $\kappa \geq 0$.
\end{proof}

\begin{proposition}\label{c:weight}
 Let $\BB$ be an alternate base satisfying~\eqref{eq:GFS}. Then $\BB$ satisfies the~weight property.   
\end{proposition}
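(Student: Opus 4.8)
The rewriting property has already been established in Proposition~\ref{typ1}, so the only thing left is to exhibit positive integers $w_n$ for which the weight function $g(\aa)=\sum_{n\ge1}w_na_n$ satisfies conditions~1.\ and~2.\ of Definition~\ref{d:weight}. Since \eqref{eq:GFS} makes $\BB$ a Parry base, I carry out the construction in the richer simple Parry case, where $\t^{(\ell)}\in\F$ for every $\ell$ and the quantities $T^{(\ell)}_j$ are well defined. The plan is to take the positive integer vector $\vec{u}=(u_1,\dots,u_p)^T$ produced by Lemma~\ref{existujeU} and to set $w_n=u_j$ whenever $n\equiv j\pmod p$ with $j\in\Z_p$. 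These weights are positive integers, so $g$ maps $\F$ into $\N$, and since $w$ is $p$-periodic we get $g(0^p\aa)=\sum_{n\ge1}w_{n+p}a_n=\sum_{n\ge1}w_na_n=g(\aa)$, which is exactly condition~1.

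For condition~2 I must verify $g(\aa)\ge g(T(\aa))$ for the two families of strings of $\S$ written down in Proposition~\ref{typ1}. The computational engine is the identity $\sum_{j\ge1}u_{\ell-1+j}\,t^{(\ell)}_j=(K\vec{u})_{\ell-1}$ (all indices of $u$ read $\!\!\mod p$), which follows by grouping the digits of $\t^{(\ell)}$ by their residue modulo $p$ and unfolding the definitions $T^{(\ell)}_m=\sum_{n\equiv m}t^{(\ell)}_n$ and $K_{\ell,j}=T^{(\ell+1)}_{j-\ell}$. Writing $\vec\varepsilon=(K-I)\vec{u}$, Lemma~\ref{existujeU} tells me that $\vec\varepsilon=\kappa\,\vec{1}$ with $\kappa\ge0$; that is, every component $(K\vec{u})_m-u_m$ equals the single nonnegative number $\kappa$. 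This is the one fact that carries all the weight.

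Next I would evaluate $g$ on each string using the $p$-periodicity of $w$ to read off the residues at the relevant positions. For a Type~2 string $\xx_\ell=0^{p+\ell-1}\t^{(\ell)}$ the identity above gives $g(\xx_\ell)=(K\vec{u})_{\ell-1}$, while $g(T(\xx_\ell))=g(0^{p+\ell-2}10^\omega)=u_{\ell-1}$, so $g(\xx_\ell)-g(T(\xx_\ell))=\varepsilon_{\ell-1}=\kappa\ge0$. For a Type~1 string $\xx_{\ell,i,k}$ I would compute $g(\xx_{\ell,i,k})=\sum_{j=1}^{pk+i}u_{\ell-1+j}t^{(\ell)}_j+u_{\ell-1+i}$ and $g(T(\xx_{\ell,i,k}))=u_{\ell-1}+\sum_{j\ge1}u_{\ell-1+i+j}\big(t^{(\ell+i)}_j-t^{(\ell)}_{pk+i+j}\big)$; the first tail sum is $(K\vec{u})_{\ell-1+i}$, and in the second the substitution $j\mapsto pk+i+j$, together with $p$-periodicity of $u$, turns it into $(K\vec{u})_{\ell-1}-\sum_{j=1}^{pk+i}u_{\ell-1+j}t^{(\ell)}_j$. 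The two partial sums cancel and what survives is $g(\xx_{\ell,i,k})-g(T(\xx_{\ell,i,k}))=\big(u_{\ell-1+i}-(K\vec{u})_{\ell-1+i}\big)-\big(u_{\ell-1}-(K\vec{u})_{\ell-1}\big)=\varepsilon_{\ell-1}-\varepsilon_{\ell-1+i}=0$. Hence condition~2 holds, with equality on Type~1 and slack $\kappa$ on Type~2.

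The main obstacle is purely bookkeeping: keeping the position indices and the residues of $u$ straight through the shifts by $p+\ell-1$ and $pk+i$, and in particular justifying the reindexing of the second tail sum, where the multiple-of-$p$ shift $pk$ is exactly what lets $p$-periodicity collapse $u_{\ell-1+i+j}$ onto the weight at position $pk+i+j$. All the conceptual content sits in Lemma~\ref{existujeU}: the equal-components property of $\vec\varepsilon$, forced by $(I-R)(K-I)\vec{u}=\vec{0}$, is precisely what makes the Type~1 differences vanish, and the sign $\kappa\ge0$ is precisely what makes the Type~2 differences nonnegative.
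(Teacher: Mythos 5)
Your proposal is correct and follows essentially the same route as the paper: the same periodic weights $w_n=u_{n\bmod p}$ built from Lemma~\ref{existujeU}, the same identity collapsing the digit sums to components of $K\vec{u}$, with the constancy of $\vec\varepsilon=(K-I)\vec{u}$ giving equality for Type~1 strings and $\kappa\ge0$ (i.e.\ $K\vec{u}\ge\vec{u}$) giving the inequality for Type~2. Like the paper, you carry out the verification only in the simple Parry case (the non-simple case, where $\S$ contains only Type~1 strings, is handled by the paper only informally via Remark~\ref{pozn:mene}), so your treatment matches the paper's in both substance and scope.
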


\begin{proof}
We now find a weight function $g$ satisfying requirements of Definition~\ref{d:weight}. 
In order to define coefficients $w_1,w_2,  w_3, \ldots$ for the weight function $g$, we use positive integers $u_1, u_2, \ldots, u_p$ found in~Lemma~\ref{existujeU} as follows. We set
$$
w_n:=u_i \quad\text{if} \quad i = n\!\! \mod p. 
$$

Let us verify requirements of Definition~\ref{d:weight}.
The fact that $g(0^p\xx) = g(\xx)$ follows directly from the~definition of~$(w_n)_{n\geq 1}$ as a sequence with period $p$.

The second requirement, namely that $g(\aa)\geq g(T(\aa))$ for each string $\aa\in\S$, will be proven separately for the two types of strings as distinguished in the proof of Proposition~\ref{typ1}.
 Let us first focus on  strings of Type 1. Inequality  $g\bigl(T(\xx_{\ell, i, k})\bigr)  \geq g(\xx_{\ell, i, k}) $  has the~form \begin{equation}\label{eq:coChceme}
  w_{p(k+1)+i +\ell -1} + \sum_{j=1}^{pk+i}t_{j}^{(\ell)}w_{j+p+\ell-1} \geq   w_{p+\ell-1}  +\sum_{j=1}^{mp} \bigl( t^{(\ell+i)}_j - t^{(\ell)}_{pk+i+j}\bigr)w_{j+p(k+1) +\ell +i-1}\,,
\end{equation}
or equivalently (using periodicity of $w_n$)
 \begin{equation}\label{eq:coChceme2}
  w_{i +\ell -1} + \sum_{j=1}^{(m+k)p+i}t_{j}^{(\ell)}w_{j+\ell-1} \geq   w_{p+\ell-1}  +\sum_{j= 1}^{mp}  t^{(\ell+i)}_j w_{j+\ell +i-1}\,,
\end{equation}
The right-hand side of the inequality~\eqref{eq:coChceme2} does not depend on $k$, whereas the left-hand side is increasing with increasing $k$. 
Since 
$$
\sum_{j=1}^{(m+k)p+i}t_{j}^{(\ell)}w_{j+\ell-1} \geq \sum_{j=1}^{mp}t_{j}^{(\ell)}w_{j+\ell-1}, 
$$
for validity of~\eqref{eq:coChceme2}, it suffices to demonstrate that for every $i, \ell\in \Z_p$ the following holds
 \begin{equation}\label{eq:coChceme3}
  w_{i +\ell -1} + \sum_{j=1}^{mp}t_{j}^{(\ell)}w_{j+\ell-1} =   w_{\ell-1}  +\sum_{j= 1}^{mp}  t^{(\ell+i)}_j w_{j+\ell +i-1}\,.
\end{equation}

In order to work with the latter equality, we will denote the $j$-th component of some vector $\vec{z}$ as $(\vec{z})_j$ and use the definition of the matrix $K$,  see~\eqref{eq:maticeK}. In this notation, we have
$$ 
\begin{aligned}
\sum_{j=1}^{mp}t_{j}^{(\ell)}w_{j+\ell-1} &= 
\sum_{r \in \Z_p}\Bigl( w_{r+\ell-1}  \!\!\!\!\!\!\sum_{{\substack{1\leq j \leq mp \\ j=r \!\!\!\!\mod p}}}t_{j}^{(\ell)}\Bigr) = \sum_{r \in \Z_p} w_{r+\ell-1} \ T^{(\ell)}_r=\\ 
&= \sum_{r \in \Z_p} w_r \ T^{(\ell)}_{r-\ell+1}=  \sum_{r \in \Z_p} w_r K_{\ell-1,r} =  (K \vec{u})_{\ell-1},   
\end{aligned}
$$
where $\vec{u}=(w_1,w_2,\dots,w_p)^T$.
By analogous considerations, we derive that $$
\sum\limits_{j= 1}^{mp}  t^{(\ell+i)}_j w_{j+\ell +i-1} = (K \vec{u})_{\ell+i-1}.
$$ 
Recall the permutation matrix $P$ from~\eqref{eq:maticeK}.
Since for each coordinate index $j \in \Z_p$  and arbitrary vector $\vec{z}$ we have $(\vec{z})_{j+1} = (P\vec{z})_j$, the equality~\eqref{eq:coChceme3} can be rewritten
$$
0=-(P^i\vec{u})_{\ell-1} - (K \vec{u})_{\ell-1} +  \vec{u}_{\ell-1} + (P^iK \vec{u})_{\ell-1}
=
\bigl((I-P^i)(I-K)\vec{u}\bigr)_{\ell-1}  
$$
As $(I-P^i) = (I+P+P^2+\cdots + P^{i-1})(I-P)$, the last equality, and hence also inequality~\eqref{eq:coChceme}, is a consequence of Lemma~\ref{existujeU}.  

If the base $\BB$ is non-simple Parry, i.e.\ sequences $\t^{(j)}$, $j\in\Z_p$, have infinitely many non-zero digits, then the proof is finished, since no string of the set $\S$ is of Type 2. We continue the discussion for the case of  a simple Parry base $\BB$, where $\t^{(\ell)} = t^{(\ell)}_1t^{(\ell)}_2\cdots t^{(\ell)}_{pm} 0^\omega$ for all $\ell\in \mathbb{Z}_p$.  The inequality  
$g(\xx_{\ell})   \geq   g\bigl(T(\xx_{\ell}) \bigr) $  reads
$$
\sum_{j=1}^{mp}t^{(\ell)}_jw_{p+\ell +j-1} \geq  w_{p+\ell -1}. 
$$
As we have shown above, this is equivalent to 
$ (K \vec{u})_{\ell-1} \geq (\vec{u})_{\ell-1}$. This follows from $K\vec{u}\geq \vec{u}$, as stated in Lemma~\ref{existujeU}. 
\end{proof}

\begin{proof}[Proof of Theorem~\ref{thm:nerovnosti}]
Proposition~\ref{c:weight} combined with Theorem~\ref{thm:sufficient} yields the first part of Theorem~\ref{thm:nerovnosti}, i.e.\ that bases $\BB$ satisfying inequalities~\eqref{eq:GFS} have (PF). 
As for the second claim, we use Proposition~\ref{p:PF=F} to derive that simple Parry bases $\BB$ satisfying~\eqref{eq:GFS} have property (F). 
\end{proof}

It follows from the proof of Proposition~\ref{c:weight} that for $p=1$, i.e.\ in the case of Rényi numeration systems, the weight function can always be taken constant, say $w_n=1$ for $n \in \N$. This is no longer the case for $p>1$, as we illustrate on the base from Example~\ref{Ex:prikladSimple}.  

\begin{example}
Consider the base  $\BB=(\frac{1+\sqrt{13}}{2},\frac{5+\sqrt{13}}{6})$ from Examples~\ref{ex:1} and~\ref{Ex:prikladSimple}.
The~expansions of~1, namely  
 $d_{\BB}(1)=2010^\omega$   and  $d_{\BB^{(2)}}(1)=110^\omega$, satisfy inequalities~\eqref{eq:GFS}, and so by Proposition~\ref{c:weight}, the base $\BB$ has the weight property. 
 The weight function in this case cannot be chosen to have constant weights $w_j=w$ for all $j$. 
 In particular, we can derive that the matrix $M=(I-P)(K-I)$ from Lemma~\ref{existujeU} is of the form $M=\left(\begin{smallmatrix}
 -3 & 2\\
 3 & -2
 \end{smallmatrix}\right)$, and it has the positive eigenvector $\vec{u}=(2,3)^T$. Thus we choose $w_1=2$, $w_2=3$. 
The values of the~weight function $g$ evaluated at the strings of the set $\S$ and their rewritten forms are shown in the~table below.
$$
\begin{array}{|rcl|c|l|c|}
\hline
 &\aa& & g(\aa) & \quad T(\aa)& g(T(\aa))\\\hline
\xx_{1,1,0} &=&0030^\omega & 6  & \ 01010^\omega & 6\\\hline
\xx_{1,2,0} &=&00210^\omega & 7 & \ 01001010^\omega & 7\\\hline
\xx_1 &=&002010^\omega & 6 &\ 010^\omega & 3\\\hline
\xx_{2,1,0} &=&00020^\omega & 6 &  \ 00101010^\omega & 6\\\hline
\xx_2 &=&000110^\omega & 5 & \ 0010^\omega & 2 \\\hline
\end{array}
$$
Note that a constant weight function is not suitable, since the sum of digits in the string $\xx_{2,1,0}=00020^\omega$ is strictly smaller than the sum of digits in its rewritten form $T(\xx_{2,1,0})=00101010^\omega$.
\end{example}

\begin{remark}
    For $p=1$ it was shown in~\cite{Akiyama06} that the only bases $\beta$ satisfying Property (PF) without having (F) are the ones for which $d_{\BB}(1)=t_1t_2\cdots t_{m-1}t_m^\omega$ with  $t_1\geq t_2\geq\cdots\geq t_{m-1}\geq t_m\geq 1$. 
    In other words, the bases that satisfy inequalities~\eqref{eq:GFS} but are not simple Parry. 
    It~is an open question whether similar statement can be expected also for $p>1$.
\end{remark}

\section{Comments}

Some sources focussing on the finiteness property in the Rényi numeration system extend the notion of a $\beta$-expansion for numbers outside of the interval $[0,1)$. One needs to simply realize that multiplying a number by the base corresponds to shifting of the radix point. 
Similar considerations are valid also for alternate bases $\BB=(\beta_1,\dots,\beta_p)$ with the limitation that only shift by a multiple of the period $p$ is allowed, which correspond to multiplication  of the number by a power of $\delta=\prod_{i=1}^p\beta_i$.
Then we can define
$$
{\rm fin}(\BB)=\pm\bigcup_{k\in\N} \delta^k {\rm Fin}(\BB).
$$

When $p=1$, such a defition allowed to formulate the finiteness property as requiring that 
${\rm fin}(\beta)=\Z[\beta,\beta^{-1}]$, which is equivalent to saying that ${\rm fin}(\beta)$ has the algebraic structure of a ring, i.e.\ it is closed under addition, subtraction and multiplication.
For $p=1$, closedness under addition and subtraction implies closedness under multiplication.
The same implication does not hold for alternate bases with $p>1$.
Our definition of (F) implies closedness of ${\rm fin}(\BB)$ under addition and subtraction. However, this does not imply that  ${\rm fin}(\BB)$  is a~ring, as it is shown on the following example.

\begin{example}
Consider $\BB=({\beta_1,\beta_2})$, where $\beta_1$ is the positive root of the polynomial $2x^2-7x-3$, and $\beta_2$ is the positive root of the polynomial $3x^2-5x-4$. Then $d_{\BB}(1)=32$, $d_{\BB^{(2)}}(1)=21$, and, by Theorem~\ref{thm:nerovnosti}, the system has Property (F). Now consider the number $x=\frac{1}{\beta_1}$. Surely, $x \in {\rm fin}(\BB)$, but
\begin{equation*}
    d_{\BB}(x^2)=0(02)^{\omega},
\end{equation*}
thus ${\rm fin}(\BB)$ is not closed under multiplication of its elements.
\end{example}

Additional requirement on the alternate base is needed to ensure that ${\rm fin}(\BB)$ is a ring. 

\medskip
Frougny and Solomyak~\cite{FS92} study another arithmetical characteristic of the Rényi numeration system with base $\beta$, namely the quantities $L_{\oplus}$ which denote the bound on the increase of the length of the fractional part arising when adding numbers with finite expansion. They have shown this quantity is finite for a Pisot base $\beta$.
Later Bernat~\cite{Bernat07} has shown this is true even for Perron numbers.
Several papers~\cite{AFMP03,GaGa04,GMP04} are devoted to methods of computation 
of $L_\oplus$ and analogous value $L_\otimes$ for multiplication.
For alternate bases, the problem remains to be investigated.






\section*{Acknowledgements}

The research was supported by the European Union within the Project CZ.02.1.01/0.0/0.0/16\_019/0000778 and by the Czech Technical University grant SGS23/187/OHK4/3T/14.

\bibliographystyle{siam}
\bibliography{references}

\end{document}